
\documentclass[12pt]{article}
\usepackage[left=2.5cm, right=2.5cm, bottom=2.5cm, top=3cm, headsep=3mm]{geometry}



\usepackage{amsmath}
\usepackage{amsfonts}
\usepackage{verbatim}
\usepackage{amsthm}
\usepackage{amssymb}
\usepackage{epsfig}
\usepackage{amsrefs}
\usepackage{xcolor}

\newtheorem{theorem}{Theorem}[section]
\newtheorem{lemma}[theorem]{Lemma}

\newcounter{claim_nb}[theorem]
\setcounter{claim_nb}{0}
\newtheorem{claim}[claim_nb]{Claim}

\newenvironment{cproof}
{\begin{proof}
 [Proof of claim.]
 \vspace{-1.2\parsep}}
{ \end{proof}}

\newcommand\lref[1]{Lemma~\ref{lem:#1}}
\newcommand\tref[1]{Theorem~\ref{thm:#1}}
\newcommand\sref[1]{Section~\ref{sec:#1}}
\newcommand\clref[1]{Claim~\ref{cl:#1}}

\newcommand{\del}{\backslash}


\newcommand{\bs}[0]{\backslash}
\newcommand{\bal}{\mathcal{B}}
\newcommand{\zA}{\mathcal A}
\newcommand{\proj}{\mathtt{Proj}}


\renewcommand{\MR}[1]{}

\title{Biased graphs with no two vertex-disjoint unbalanced cycles}

\author{Rong Chen\footnote{Center for Discrete Mathematics, Fuzhou University, Fuzhou, Fujian, China. Email: rongchen@fzu.edu.cn. The author is supported partially by CNNSF (No.11201076)  and CSC.},
Irene Pivotto\footnote{School of Mathematics and Statistics, University of Western Australia, Perth WA, Australia. Email: irene.pivotto@uwa.edu.au.
This author is supported by an Australian Research Council Discovery Project (project number DP110101596).}}

\begin{document}
\maketitle

\begin{abstract}
Lov\'asz has completely characterised the structure of graphs with no two vertex-disjoint cycles, while
Slilaty has given a structural characterisation of graphs with no two vertex-disjoint odd cycles; his result is in fact more general, describing signed graphs with no two vertex-disjoint negative cycles.
A {\em biased graph} is a graph with a distinguished set of cycles (called balanced) with the property that any theta subgraph does not contain exactly two balanced cycles. In this paper we characterise the structure of biased graphs with no two vertex-disjoint unbalanced cycles, answering a question by Zaslavsky and generalising the results of Lov\'asz and Slilaty. 
\end{abstract}

\section{Introduction}

By a {\em cycle} in a graph we mean a connected subgraph where every vertex has degree two.
Throughout the paper we will say that two subgraphs are {\em disjoint} to mean that they are vertex-disjoint; this applies in particular to cycles and paths.
A {\em biased graph} is a pair $(G, \bal)$, where $G$ is a graph and $\bal$ is a collection of cycles of $G$ satisfying the {\em theta property}, which is as follows. For any two cycles $C_1$ and $C_2$ in $\bal$ such that $C_1 \cap C_2$ is a path with at least one edge, the third cycle in $C_1 \cup C_2$ is also in $\bal$. The cycles in $\bal$ are called {\em balanced}, while those not in $\bal$ are {\em unbalanced}. Biased graphs were introduced by Zaslavsky in~\cite{MR1007712}. Examples of biased graphs are graphs with all cycles balanced, graphs with all cycles unbalanced and biased graphs arising from group-labelled graphs. In a group-labelled graph each edge is oriented and assigned a value from a group; a cycle is balanced if multiplying the group values along the cycle (where we take the inverse values on edges traversed backwards) produces the group identity.

Biased graphs give rise to two main types of matroids, frame matroids and lift matroids (see~\cite{MR1088626}). We will not discuss these matroids here, but merely mention that these two matroids are the same, for a given biased graph $\Omega$, if and only if $\Omega$ does not contain two vertex-disjoint unbalanced cycles. Hence the question arises of which biased graphs have this property.
This question was first posed by Zaslavsky (Problem~3.5 in~\cite{MR1088626}) and is the subject of this paper.

There are two simple cases of biased graphs having no two vertex-disjoint unbalanced cycles. The first is biased graphs with no unbalanced cycles at all, i.e. biased graphs of the form $(G,\bal)$, where $\bal$ is the set of all cycles of $G$. Biased graphs of this form are called {\em balanced}. The second simple example of biased graphs with no two disjoint unbalanced cycles is biased graphs where all unbalanced cycles use a specific vertex $v$, which is then called a {\em blocking vertex}. We will focus on biased graphs that have no two vertex-disjoint unbalanced cycles, but are not balanced and have no blocking vertex. Such biased graphs are called {\em tangled}.

A special type of biased graphs are those where all cycles are unbalanced. In this case, our question reduces to ask for the structure of graphs with no two vertex-disjoint cycles. This question was answered by Lov\'asz in~\cite{MR0211902} (see~\cite{MR2078877} for a proof in English).

\begin{theorem}[Lov\'asz~\cite{MR0211902}]\label{thm:Lovasz}
Let $G$ be a connected graph with no two disjoint cycles. Then either $G-v$ is a forest for some $v \in V(G)$, or $G$ is a subgraph of a graph obtained from the ones in Figure~\ref{fig:Lovasz} by possibly attaching trees on single vertices (where, in the figure, $k \geq 1$ and $\ell \geq 3$).
\end{theorem}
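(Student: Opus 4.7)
The plan is to prove the theorem by induction on $|V(G)|$. First, I would reduce to the case where $G$ is $2$-connected: a graph has no two disjoint cycles if and only if at most one of its blocks contains a cycle, so applying the induction hypothesis to the unique cyclic block (if any) yields the result, with the remaining blocks absorbed into the ``attached trees.'' I may further assume that $G$ has minimum degree at least $2$ (pendants go into the trees), and by a standard suppression/subdivision argument I may even assume minimum degree at least $3$, since the property of having no two disjoint cycles is preserved under suppressing or subdividing degree-$2$ vertices, and subdivisions can be restored at the end.

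Next, I would fix a cycle $C$ in $G$ and analyse the \emph{$C$-bridges}, i.e.\ the components of $G - V(C)$ together with their attachment vertices on $C$ (plus the chords of $C$). By hypothesis every cycle of $G$ meets $C$. Using $2$-connectivity, each bridge attaches to $C$ at $\geq 2$ vertices; and any two $C$-bridges whose attachment sets interleave cyclically on $C$ produce two disjoint cycles, so the attachment patterns of distinct bridges must be nested or ``aligned'' in a strong sense. The first subgoal is to show that if some bridge attaches to $C$ at $\geq 4$ vertices, or if two bridges share two common attachments, then $G$ is forced into one of the configurations in Figure~\ref{fig:Lovasz} by a direct argument exhibiting many internally disjoint paths between a small number of hub vertices.

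The main obstacle is the remaining case, where every $C$-bridge is ``small'' and no obvious hub is visible. Here I would split on whether some vertex $v$ hits every cycle: if so, minimality forces $G - v$ to be a forest and we are done. Otherwise, I would pick two cycles $C_1, C_2$ minimising $|V(C_1)\cap V(C_2)|$, which by hypothesis is nonempty. A careful exchange argument on paths in $C_1\cup C_2$ and their interactions with a third cycle (which must exist and meet both) should show that either $G$ reduces to two hub vertices joined by $\ell \geq 3$ internally disjoint paths plus attached trees (the $\ell$-family in the figure), or $G$ contains a $K_4$-subdivision whose every bridge is tightly constrained and which, after absorbing trees, embeds into the $K_5$-like or wheel-like graph of the $k$-family. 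This final structural case analysis, closing off all ways that a tangled cycle structure could appear without producing two disjoint cycles, is by far the most delicate step and would occupy the bulk of a full proof.
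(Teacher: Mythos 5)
The paper does not prove this theorem at all: it is quoted verbatim from Lov\'asz (the authors explicitly point to \cite{MR2078877} for a proof in English), so there is no in-paper argument to measure your proposal against. Judged on its own, your sketch follows the standard line of attack for this result (reduce to the $2$-connected, minimum-degree-$3$ case, fix a cycle $C$, and analyse how the $C$-bridges can attach without creating two disjoint cycles), but it is a plan rather than a proof. The entire content of the theorem lives in the step you defer --- ``the final structural case analysis, closing off all ways that a tangled cycle structure could appear,'' which you yourself acknowledge ``would occupy the bulk of a full proof.'' Nothing in the proposal actually shows that interleaving attachment patterns force one of the specific graphs of Figure~\ref{fig:Lovasz} ($K_5$, the wheel-like graphs with a hub of three vertices, the two-vertex graph with $\ell\geq 3$ internally disjoint paths), nor that these are the only outcomes; asserting that a ``direct argument exhibiting many internally disjoint paths between a small number of hub vertices'' will do it is precisely the claim that needs proving.

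One concrete error in the reductions: it is not true that a graph has no two disjoint cycles if and only if at most one of its blocks contains a cycle. Two triangles sharing a cutvertex $v$ give two cyclic blocks but no two vertex-disjoint cycles. The correct statement is that if two or more blocks contain cycles, then all cycles must pass through a single common cutvertex $v$ (so $G-v$ is a forest and the first outcome holds), and only in the remaining case is there a unique cyclic block to which induction can be applied. This is patchable, but as written the very first step of your reduction is false.
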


\begin{figure}[htbp]
\begin{center}
\includegraphics[page=1,height=3.5cm]{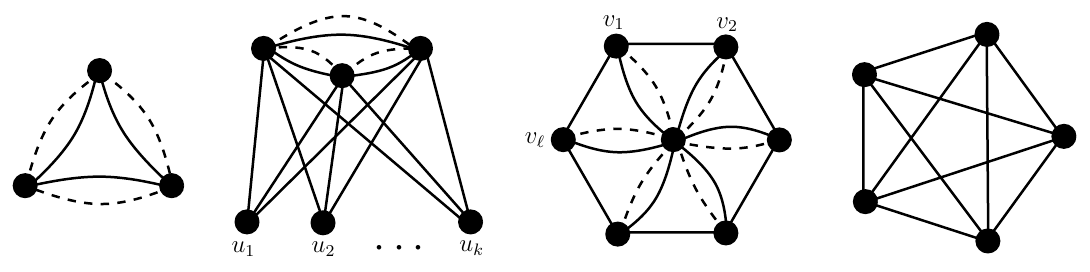}
\caption{Graphs with no two disjoint cycles. A dotted edge indicates that any number of parallel edges may be added to that edge.}
\label{fig:Lovasz}
\end{center}
\end{figure}

Another particular type of biased graphs are those arising from signed graphs. A signed graph is a pair $(G,S)$, where $S \subseteq E(G)$. The associated biased graph is $(G,\bal_S)$, where a cycle $C$ is in $\bal_S$ if and only if $|C\cap S|$ is even. If $S$ is replaced by $S'=S\Delta D$, for an edge cut $D$ of $G$, then $\bal_S=\bal_{S'}$. For simplicity we will sometimes identify the biased graph arising from a signed graph with the signed graph itself.
A family of tangled signed graphs is that of {\em projective planar} signed graphs, which are signed graphs of the form $(G,S)$, where $G$ can be embedded in the projective plane so that $S$ is a nonseparating cycle of the topological dual of $G$. In other words, $G$ may be embedded in the projective plane so that the unbalanced cycles are exactly the nonseparating cycles.
In~\cite{MR2344133} Slilaty characterized tangled signed graphs and showed that, saved for a specific case and simple decompositions, they are projective planar. We will make use of similar decompositions for biased graphs; such decomposition will be discussed in~\sref{separations}.

\begin{theorem}[Slilaty~\cite{MR2344133}]\label{thm:Slilaty}
Any connected tangled signed graph is either
\begin{itemize}
    \item projective planar, or
    \item isomorphic to $(G, E(G))$, along with possibly some balanced loops, where $G$ is obtained from $K_5$ by adding parallel edges, or
    \item a $1$-, $2$- or $3$-sum of a tangled signed graph and a balanced signed graph having at least $2, 3$ or $5$ vertices respectively.
\end{itemize}
\end{theorem}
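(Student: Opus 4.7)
The plan is to induct on $|V(G)|$, exploiting small separations to peel off balanced summands until a highly connected residual case is reached, which is then analyzed topologically. If $G$ has a vertex cut $X$ of size $k \in \{1,2,3\}$ with sides $G_1, G_2$, then the tangled hypothesis forbids negative cycles from appearing on both sides away from $X$; otherwise two vertex-disjoint negative cycles exist. Resigning (replacing $S$ by $S \Delta D$ for an edge cut $D$) should then let one side become balanced in the biased-graph sense, and checking that the induced $k$-sum operation is well defined for signed graphs displays $(G,S)$ as a $k$-sum of the tangled part and the balanced part. The thresholds $2, 3, 5$ on the size of the balanced side come from ruling out trivial decompositions in which the balanced side can be absorbed into the tangled side; for instance, a balanced summand of only $k$ vertices across a $k$-separation adds no new cycle structure once resigned.

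For the vertically $4$-connected case, I would exploit a rigidity-of-embedding theorem: such a graph $G$, if embeddable in some surface, has an essentially unique embedding there. The key translation is from the combinatorial hypothesis to a topological constraint, and the natural tool is the $\mathbb{Z}/2$-double cover $\tilde G \to G$ determined by the signature $S$. Negative cycles of $G$ lift to cycles in $\tilde G$ that swap the two sheets, and a pair of vertex-disjoint negative cycles in $G$ corresponds to a pair of vertex-disjoint antipodally paired cycles in $\tilde G$. Applying \tref{Lovasz} or a symmetric variant to $\tilde G$ should force $\tilde G$ to be planar in the main case, which quotients to a projective planar embedding of $G$ in which the unbalanced cycles are exactly the nonseparating ones.

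The $K_5$ exception arises because $K_5$ with all edges negative is tangled (any two cycles of $K_5$ share a vertex) yet does not embed in the projective plane; adding parallel edges and balanced loops preserves both properties. A direct inspection of the output of the $4$-connected reduction should show this is the only sporadic case. The main obstacle is precisely the vertically $4$-connected step: translating the purely combinatorial no-two-disjoint-negative-cycles condition into a concrete projective-planar embedding. The covering-space reformulation is the natural conduit, but controlling how planarity of $\tilde G$ descends to a projective-planar embedding of $G$ with the correct signature, and cleanly isolating the $K_5$ exception along the way, requires a careful topological case analysis that mirrors the role of Figure~\ref{fig:Lovasz} in the unsigned setting.
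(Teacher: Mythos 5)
This statement is quoted from Slilaty's paper and is not proved in the present article; the authors only remark that it follows from their Theorem~\ref{thm:main-thm}, whose proof is entirely combinatorial (maximal balanced spanning subgraphs, linkages and $3$-planarity in the sense of Seymour--Thomassen--Yu), with no covering-space argument anywhere. So your proposal is a genuinely different route, and it has to be judged on its own; as written it has a serious gap.

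The gap is in the vertically $4$-connected step. First, a negative cycle $C$ of $(G,S)$ does not lift to ``a pair of vertex-disjoint antipodally paired cycles'' in the double cover $\tilde G$: it lifts to a single connected cycle of length $2|C|$ that is invariant under the deck involution (it is the \emph{positive} cycles that lift to antipodal pairs). Consequently the hypothesis ``no two disjoint negative cycles in $G$'' translates to ``no two disjoint \emph{involution-invariant} cycles in $\tilde G$,'' not to ``no two disjoint cycles in $\tilde G$.'' Since every positive cycle already produces two disjoint cycles in $\tilde G$, Theorem~\ref{thm:Lovasz} is simply not applicable to $\tilde G$, and the ``symmetric variant'' you invoke is not an off-the-shelf modification: classifying graphs with a free involution and no two disjoint invariant cycles is essentially the whole content of Slilaty's theorem, so the argument is circular at its core. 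Descending from a planar $\tilde G$ to a projective-planar $G$ is also not automatic (you need the involution to be realized by an orientation-reversing fixed-point-free homeomorphism of the sphere, or to invoke Negami's theorem that a connected graph with a planar double cover is projective-planar, and then separately verify that unbalanced cycles become exactly the noncontractible ones). Finally, in the decomposition step your claim that a $k$-vertex cut with $k=3$ forbids negative cycles on both sides is false: both bridges of a $3$-cut can be unbalanced in a tangled graph, and handling that configuration is precisely what produces the extra structure in the authors' Lemma~\ref{lem:4conn}; it cannot be dismissed by resigning one side.
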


If a signed graph $(G,S)$ is taken with $S=E(G)$, then the unbalanced cycles of $(G,S)$ are exactly the odd cycles of $G$. Thus \tref{Slilaty} also describes graphs having no two vertex-disjoint odd cycles.
Such characterisation was also given for internally 4-connected graphs in~\cite{MR3048156}. 

Our main result is the proof of the following theorem, which generalises both \tref{Lovasz} and \tref{Slilaty}. We make use of \tref{Lovasz} in our proof (specifically, in the proof of \lref{balanced-sub}), while \tref{Slilaty} follows easily from our result.

We say that a biased graph is {\em simple} if it does not contain balanced loops and pairs of parallel edges $e$ and $f$ such that $\{e,f\}$ is a balanced cycle. The {\em simplification} of a biased graph $\Omega$ is a maximal subgraph of $\Omega$ which is simple. Thus the simplification of $\Omega$ is the biased graph  obtained by deleting all balanced loops and all but one edge in any balanced parallel class. A biased graph is tangled if and only if its simplification is tangled. Thus we only consider simple biased graphs in our result. If $\Omega=(G,\bal)$ is a biased graph, we denote by $||\Omega||$ the graph $G$ (called the {\em underlying graph} of $\Omega$).

\begin{theorem}\label{thm:main-thm}
Let $\Omega$ be a simple connected tangled biased graph. Then either:
\begin{itemize}
    \item[(T1)] $\Omega$ is one of the following:
    \begin{itemize}
        \item[(T1a)] a projective planar signed graph,
        \item[(T1b)] a generalised wheel,
        \item[(T1c)] a criss-cross,
        \item[(T1d)] a fat triangle,
        \item[(T1e)] projective planar with a special vertex,
        \item[(T1f)] projective planar with a special pair,
        \item[(T1g)] projective planar with a special triple,
        \item[(T1h)] a tricoloured graph, or
    \end{itemize}
    \item[(T2)] $||\Omega||$ is obtained from $K_5$ by possibly adding edges in parallel to an edge of $K_5$, or
    \item[(T3)] $\Omega$ is a $1$-, $2$- or $3$-sum of a tangled biased graph and a balanced signed graph having at least $2, 3$ or $4$ vertices respectively.
\end{itemize}
\end{theorem}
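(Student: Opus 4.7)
The plan is to carry out an induction on $|E(\Omega)|$ that first strips off the sum decompositions of case (T3) and then settles the remaining ``indecomposable'' case by a structural analysis. For the first phase, using the machinery to be developed in \sref{separations}, I would show that whenever $\Omega$ admits a $k$-separation ($k \in \{1,2,3\}$) one of whose sides is a balanced signed graph on at least the required number of vertices, the other side is itself a smaller connected tangled biased graph; induction on this smaller side then yields the structural conclusion for $\Omega$. This reduces us to the situation where no such separation exists, which is where the real work lies.

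For the indecomposable case, the strategy is to locate a large balanced subgraph $H$ inside $\Omega$ and to analyse how the unbalanced cycles attach to $H$. This is the role of \lref{balanced-sub}, which feeds \tref{Lovasz} applied to the ``unbalanced skeleton'' of $\Omega$: by tangledness no vertex blocks all unbalanced cycles, so Lov\'asz's dichotomy produces one of the specific configurations of Figure~\ref{fig:Lovasz} as the hosting pattern for the unbalanced cycles. Back-tracking each of these configurations into $\Omega$, using simplicity and the absence of a small balanced separation, should funnel the possibilities into exactly one of (T1a)--(T1h) or (T2). In particular, the projective planar outcomes (T1a) and (T1e)--(T1g) should arise when the unbalanced cycles can be embedded as nonseparating cycles on the projective plane with $H$ filling the rest of the surface; the ``special vertex/pair/triple'' decorations correspond to degenerate attachments that the $\le 3$-separation hypotheses cannot strip off. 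The exceptional shapes (T1b)--(T1d), (T1h) and the $K_5$ outcome (T2) should instead match Lov\'asz's exceptional graphs in Figure~\ref{fig:Lovasz}, each supporting only a finite family of biased enrichments compatible with tangledness and the theta property; that (T1a) subsumes the signed-graph conclusion of \tref{Slilaty} then gives a sanity check on the list.

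The main obstacle will be the bookkeeping in the indecomposable case: the biased theta property is strictly weaker than signed-graph parity, so arguments that in Slilaty's proof of \tref{Slilaty} collapse a configuration onto projective planarity here leave residual ambiguity, and that ambiguity is precisely what the new families (T1b)--(T1h) are designed to capture. Certifying completeness of the list --- showing that every configuration surviving the connectivity reduction really does fit one of the named classes --- will require several technical lemmas pinning down local behaviour near unbalanced cycles through three disjoint paths and short theta subgraphs, together with a case-by-case verification that each candidate class is genuinely tangled and indecomposable. Matching each Lov\'asz-exceptional template in Figure~\ref{fig:Lovasz} with its admissible biased enrichments is therefore the heart of the argument and the step most likely to require delicate case analysis.
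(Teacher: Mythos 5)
Your first phase is fine as far as it goes (though note the theorem only asks you to exhibit $\Omega$ as a sum when a suitable small separation exists, so no induction is actually needed; the paper simply proves that a non-$4$-connected tangled $\Omega$ is either a $t$-sum as in (T3) or a generalised wheel). The genuine gap is in the indecomposable case, where your central mechanism does not work. You propose to apply \tref{Lovasz} to an ``unbalanced skeleton'' of $\Omega$, arguing that the unbalanced cycles must be hosted by one of the configurations in Figure~\ref{fig:Lovasz}. But there is no such object: the family of unbalanced cycles of a biased graph is not the family of all cycles of any subgraph, so Lov\'asz's theorem simply does not apply to it. In the paper \tref{Lovasz} is used only once, inside the proof of \lref{balanced-sub}, and only to conclude that a $4$-connected tangled $\Omega$ on at least six vertices must contain \emph{at least one balanced cycle} (otherwise $||\Omega||$ would have no two disjoint cycles at all, contradicting Lov\'asz's list). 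Consequently your claimed matching of (T1b)--(T1d), (T1h) and (T2) with Lov\'asz's exceptional graphs is not how these classes arise: the generalised wheel comes out of the $3$-separation analysis, the criss-cross and the special triple out of the case where exactly one vertex lies outside a maximal balanced subgraph, the fat triangle out of the failure of the extra edges to contain two independent edges, and (T2) out of the five-vertex case.

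The missing idea is the linkage machinery. After one finds a $2$-connected \emph{spanning} balanced subgraph $\Omega'$, the absence of two disjoint unbalanced cycles translates into the absence of certain $(x_i{-}y_i,x_j{-}y_j)$-linkages in $\Omega'$ between the endpoints of the edges of $E(\Omega)-E(\Omega')$. The Seymour--Thomassen characterisation (\tref{noLinkage}) and its extensions (\lref{link5}, \lref{link6}) then force $\Omega'$ to be planar with all those endpoints on a single face, which is what makes $||\Omega||$ projective planar. The final classification into (T1a), (T1e)--(T1h) is not topological but combinatorial: one studies the $2$-balanced equivalence relation on the extra edges (\lref{2balanced} recovers the signed-graph case) and the ``close unbalanced pairs,'' and the special vertex/pair/triple and tricoloured structures are read off from the colour classes of that relation and from blocking pairs. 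Without the linkage step your plan has no way to certify completeness of the list, which you yourself identify as the heart of the matter.
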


The structures in (T1b)-(T1h) are described in \sref{structures}; all these structures, except for the generalised wheel and fat triangle, occur when the underlying graph of $\Omega$ is projective planar.
Somewhat surprisingly, all of (T1b)-(T1h) have the property that the removal of some set of at most three vertices leaves a balanced graph.

In the next section we provide basic definitions that will be used throughout the paper.
The structures in \tref{main-thm} are defined in \sref{structures}; figures for these structures may be found in the Appendix.
For the proof of the main theorem we need to use and extend existing results on linkages; these are presented in \sref{linkages}.
In \sref{separations} we consider the case when the tangled biased graph has small separations; we show that in this case $\Omega$ is either decomposable along a $1$-, $2$- or $3$-sum or $\Omega$ is a generalised wheel.
In \sref{balanced_sub} we show that $\Omega$ contains a  2-connected spanning balanced subgraph, unless $\Omega$ is a criss-cross or
a projective planar biased graph with a special pair. In the same section we also show that such balanced subgraph may be chosen to be planar, unless $\Omega$ is a fat triangle.
Finally, \sref{main_proof} contains the proof of \tref{main-thm}.

\section{Basic definitions}\label{sec:definitions}

All graphs in this work are undirected and may have loops and parallel edges.
Let $G$ be a graph.  For a set $X$ of vertices we denote by $G[X]$ the subgraph of $G$ induced by $X$ and by
$G-X$ the subgraph $G[V(G)-X]$; we use $G-v$ as shorthand notation for $G-\{v\}$.
Moreover, we denote by $N_G(X)$ the set of vertices that are not in $X$ but are adjacent to a vertex in $X$.
If $Y$ is a set of edges, we denote by $G[Y]$ the subgraph of $G$ induced by $Y$ and by $V_G(Y)$ the vertex set of $G[Y]$;
we let $G-Y$ denote the graph obtained from $G$ by deleting the edges in $Y$.

Given a graph $G$ and $X\subseteq V(G)$, $\delta_G(X):=\{uv\in E(G):u\in X,v\not\in X\}$
and we write $\delta_G(v)$ for $\delta_G(\{v\})$.
Throughout the paper we shall omit indices when there is no ambiguity.
For instance we may write $\delta(v)$ for $\delta_G(v)$.
Two edges in a graph are {\em independent} if they have no common endpoint. A set of edges $U$ is {\em independent} if the edges in $U$ are pairwise independent.

Let $G$ be a graph and $A$ and $B$ be sets of vertices of $G$.
An {\em $(A,B)$-path} is a path of $G$ with one endpoint in $A$ and one endpoint in $B$, and no other vertex in $A \cup B$.
We use ``$(a,b)$-path" as shorthand for ``$(\{a\},\{b\})$-path" and similarly,
``$(a,B)$-path" as shorthand for ``$(\{a\},B)$-path".
If $A=B$, then we refer to an $(A,A)$-path simply as an $A$-path.
Sometimes we abuse notation and in the previous definition we replace one or both of $A$ and $B$ with subgraphs of $G$. So an $(H_1,H_2)$-path (for subgraphs $H_1$ and $H_2$ of $G$) is simply a $(V(H_1),V(H_2))$-path.
A {\em theta graph} is a graph formed by three internally disjoint $(a,b)$-paths, for some distinct vertices $a,b$.

If $X$ is a set of edges of $G$, we define the {\em boundary} of $X$ as $V_G(X)\cap V_G(\bar{X})$ (where $\bar{X}$ denotes the complement of $X$) and the {\em interior} of $X$ as the vertices in  $V_G(X)$ that are not on the boundary. We also define the boundary and interior of a subgraph $H$ to be the boundary and interior of $E(H)$.
A partition $(A_1,A_2)$ of $E(G)$ is a {\em $k$-separation} if $G[A_1]$ and $G[A_2]$ are both connected with  nonempty interior and the boundary of $A_1$ has size $k$. Sometimes we will abuse notation and say that $(G[A_1], G[A_2])$ is a $k$-separation.
We may also omit one side of a $k$-separation and say, for example, that $A$ is a $k$-separation if $(A,\bar{A})$ is a $k$-separation.
A graph $G$ is {\em $k$-connected} if it has no $\ell$-separation for $\ell < k$.

A set of vertices $X$ of $G$ is a {\em vertex-cut} if $G- X$ is disconnected and $X$ is minimal with this property. It is a {\em $k$-vertex-cut} if it is a vertex-cut of size $k$.
If $X=\{v\}$ is a $1$-vertex-cut, we call $v$ a {\em cutvertex}.
A {\em bridge} of a vertex set $X$ is the subgraph of $G$ formed by a component $H$ of $G- X$ together with the edges between $H$ and $X$ and the endpoints of these edges. We also call a bridge of $X$ an $X$-bridge. Note that our definition of a bridge differ slightly from the usual one, since we do not consider an edge connecting two vertices of $X$ to be an $X$-bridge. 

Given a graph $G$, the {\em blocks} of $G$ are the maximal $2$-connected subgraphs of $G$. If $H_1,\ldots,H_k$ are all the blocks of $G$, then $E(H_1),\ldots,E(H_k)$ is a partition of $E(G)$ and we may associate a tree $\mathcal{T}$ with this partition. Let $\{v_1,\ldots,v_{\ell}\}$ be the set of cut-vertices of $G$ and define $V(\mathcal{T})=\{H_1,\ldots,H_k\} \cup \{v_1,\ldots,v_{\ell}\}$; block $H_i$ and cut-vertex $v_j$ are adjacent in $\mathcal{T}$ if $v_j \in V(H_i)$. A block is a {\em leaf block} if it corresponds to a leaf of $\mathcal{T}$.


Let $\Omega=(G,\bal)$ be a biased graph. The graph $G$ is the {\em underlying} graph of $\Omega$, denoted as $||\Omega||$.
We will often refer to properties of $||\Omega||$ as being properties of $\Omega$; for example, we may say that $\Omega$ is $k$-connected to mean that $||\Omega||$ is $k$-connected and we may write $\delta_{\Omega}(v)$ to mean $\delta_{||\Omega||}(v)$.
We say that a biased graph $\Omega'=(G',\bal')$ is a {\em subgraph} of $\Omega$ if $G'$ is a subgraph of $G$ and $\bal'=\{B \in \bal : B \subseteq G'\}$.
Given a set $X$ of edges of $G$, the biased graph {\em induced by $X$} is the subgraph of $\Omega$ with underlying graph $G[X]$. We will denote such subgraph as $\Omega[X]$.
When referring to a subgraph of $G$, we will assume that such subgraph inherits the structure of balanced cycles of $\Omega$. For example, when referring to a bridge of a set $X \subseteq V(G)$, we will often consider such bridge as a biased graph.

We say that two cycles in a biased graph {\em have the same bias} if they are both balanced or both unbalanced.
Let $C_1$ and $C_2$ be cycles such that $C_1\cup C_2$ is a theta subgraph. Let $C_3$ be the third cycle contained in $C_1\cup C_2$. We say that $C_3$ is obtained from $C_1$ by {\em rerouting along $C_2$}. If $C_1,\ldots, C_k$ is a sequence of cycles such that $C_{i+1}$ is obtained from $C_{i}$ be rerouting along some cycle $C^i$, then we say that $C_k$ is obtained from $C_1$ by {\em rerouting along $C^1,\ldots,C^{k-1}$}. By the theta property, if $C_2$ is obtained from $C_1$ by rerouting along a balanced cycle, then $C_1$ and $C_2$ have the same bias. Inductively, this is also the case if $C_2$ is obtained from $C_1$ by rerouting along a set of balanced cycles.
We will make repeated use of this fact throughout the paper.

Let $\Omega$ be a biased graph and $\Omega'$ be a balanced subgraph of $\Omega$.
Given a set $A \subseteq  E(\Omega)-E(\Omega')$, we say that a cycle $C$ of $\Omega$ is an $A$-cycle for $\Omega'$ if $A \subseteq C \subseteq \Omega' \cup A$. We write $e$-cycle as a shorthand for $\{e\}$-cycle.
We say that $F\subseteq E(\Omega)-E(\Omega')$ is {\em 2-balanced} with respect to $\Omega'$ if, for all $A \subseteq F$ with $|A|=2$, every $A$-cycle is balanced. The theta property implies that if $f \in E(\Omega)-E(\Omega')$ and some $f$-cycle for $\Omega'$ is unbalanced, then so are all the $f$-cycles for $\Omega'$ (see Proposition 3.1 in~\cite{MR1007712}). The same holds for $A$-cycles if $A$ is a set of two edges sharing an endpoint. However, this is not true in general: as an example, choose $||\Omega||=K_4$ and let $\Omega'$ be a 4-cycle of $\Omega$. Let $f_1$ and $f_2$ be the diagonals of this 4-cycle. Then we may assign one of the 4-cycles using $f_1$ and $f_2$ to be balanced, and the other to be unbalanced (while all the triangles are unbalanced).
We make use of 2-balanced sets in \lref{2balanced}: suppose that $\Omega$ is a connected tangled biased graph and that $\Omega'$ is a maximal balanced subgraph of $\Omega$. Then we show in the lemma that
if $E(\Omega)-E(\Omega')$ is 2-balanced with respect to $\Omega'$ then $\Omega$ is a signed graph (with signature $E(\Omega)-E(\Omega'))$.

A vertex $v$ of a biased graph $\Omega$ that intersects all unbalanced cycles of $\Omega$ is called a {\em blocking vertex}.
Two vertices $v$ and $w$ (neither of which is a blocking vertex) form a {\em blocking pair} if they intersect all unbalanced cycles.
Suppose that $v$ is a blocking vertex of $\Omega$ and $\Omega-v$ is connected.
In this case we define a relation $\sim_v$ on the edges in $\delta_{\Omega}(v)$ by declaring $e \sim_v f$ if either $e=f$ or all cycles containing $e$ and $f$ are balanced. This is an equivalence relation, as we show next.
Let $e_1,e_2,e_3$ be distinct edges in $\delta_{\Omega}(v)$ with $e_1 \sim_v e_2$ and $e_2 \sim_v e_3$. Let $H$ be a theta subgraph of $\Omega$ containing all of $e_1, e_2$ and $e_3$; such theta subgraph exists because $\Omega-v$ is connected, thus it contains a spanning tree. The cycle in $H$ containing both $e_1$ and $e_2$ is balanced, and so is the cycle containing both $e_2$ and $e_3$. Therefore the cycle $C$ containing $e_1$ and $e_3$ is balanced. Any other cycle containing $e_1$ and $e_3$ may be obtained from $C$ by rerouting along balanced cycles (contained in $\Omega - v$), hence all the cycles containing $e_1$ and $e_3$ are balanced and $e_1 \sim_v e_3$, showing that $\sim_v$ is an equivalence relation. The same argument shows that a cycle of $\Omega$ (that is not a loop) is unbalanced if and only if it contains two edges in $\delta_{\Omega}(v)$ which are not equivalent. We call the partition given by the equivalence classes of $\sim_v$ the {\em standard partition} of $\delta_{\Omega}(v)$.

\section{Tangled structures}\label{sec:structures}
In this section we describe the possible structure of tangled biased graph.
All the structures in this section are depicted in Appendix~\ref{sec:figures}.

We will make repeated use of the following definition (which will be repeated and extended in \sref{linkages}).
Given two disjoint sets of vertices $X$ and $Y$ in a graph $G$, we say that $(G, (X,Y))$ is {\em planar} if $G$ is a plane graph, $X\cup Y$ belongs to the same face $F$ of $G$ and there is some ordering $(x_1,\ldots,x_k)$ of the vertices in $X$ and some ordering $(y_1,\ldots,y_{\ell})$ of the vertices in $Y$ such that $x_1,\ldots,x_k,y_1,\ldots,y_{\ell}$ appear on $F$ in this circular order. If $X=\{x\}$ then we may abuse notation and write that $(G,(x,Y))$ is planar.
This definition extends to the cases when $x_k=y_1$ and/or $y_{\ell}=x_1$. We also extend this notation in the obvious way to the case when we have more than two sets.

\subsection{Generalized wheels}
Let $\Omega=(G,\bal)$ be a biased graph. Suppose that $G$ contains a special vertex $w$ such that:
\begin{itemize}
    \item[(a)]$G-w$ is the union of $2$-connected graphs $G_1,\ldots,G_k$ (for $k \geq 2$) with $V(G_i) \cap V(G_{i+1})=\{z_i\}$ for every $i \in [k]$
(where the indices are modulo $k$) and $z_1,\ldots,z_k$ are all distinct;
    \item[(b)] every $G_i$ is balanced;
    \item[(c)] every cycle of $G - w$ using all edges in $G_1,\ldots,G_k$ is unbalanced.
\end{itemize}
Moreover, for every $G_i$ that is not a single edge, the vertices in $(N_G(w) \cap V(G_i)) - \{z_{i-1},z_i\}$ partition into two nonempty sets $X_i$ and $Y_i$ such that:
\begin{itemize}
    \item[(d)] $(G_i,(z_{i-1},X_i,z_i,Y_i))$ is planar, and
    \item[(e)] for every pair of edges $wx$ and $wy$ with $x,y \in V(G_i)-\{z_{i-1},z_i\}$, and every $(x,y)$-path $P$ in $G_i$, the cycle $P \cup \{wx,wy\}$  is unbalanced if and only if one of $x$ and $y$ is in $X_i$ and the other is in $Y_i$.
\end{itemize}
The other cycles of $\Omega$ are not determined (as long as the theta property still holds).
We say that such $\Omega$ is a {\em generalized wheel}.
An example of a generalised wheel is given in Figure~\ref{fig:gen-wheel}.

\subsection{Criss-cross}
Starting from a planar graph $(H,(u_1,u_2,u_3,u_4))$, where $H$ is 2-connected, and a vertex $w$ not in $H$, we construct a tangled biased graph $\Omega$ as follows. The graph $||\Omega||$ is obtained from $H$ and $w$ by adding four edges $e_i=wu_i$, for $i\in [4]$, and two more edges $f_1=u_1u_3$ and $f_2=u_2u_4$.
Every cycle contained in $H$ is balanced.
Every $f_1$- and $f_2$-cycle for $H$ is declared unbalanced, and so are cycles formed by a $(u_i,u_j)$-path in $H$ together with $e_i$ and $e_j$, where $i\neq j$.
The two triangles $\{e_1,e_3,f_1\}$ and $\{e_2,e_4,f_2\}$ are balanced.
The other cycles of $\Omega$ are not determined (as long as the theta property still holds).
We call a biased graph constructed in this fashion a {\em criss-cross} (see Figure~\ref{fig:criss-cross}).

\subsection{Fat triangles}
Consider any graph $H$ with three distinct vertices $v_1,v_2,v_3$. Construct a biased graph $\Omega$ by adding nonempty sets of edges $F_{12}, F_{23}, F_{31}$, where every edge in $F_{ij}$ is between $v_i$ and $v_j$.
Declare $H$ to be balanced; every $f$-cycle is unbalanced, for all $f \in F_{12}\cup F_{23}\cup F_{13}$.
The other cycles of $\Omega$ are not determined (as long as the theta property still holds).
We call a biased graph constructed in this fashion a {\em fat triangle} (see Figure~\ref{fig:fat-triangle}).

\subsection{Projective planar with a special vertex}\label{sec:projective planar with a special vertex}
Consider two disjoint planar graphs $(H_1,(x_1,\ldots,x_m,u_1,z_2))$ and  $(H_2,(y_1,\ldots,y_m,z_1,u_2))$, where consecutive vertices in $x_1,\ldots,x_m$ and/or in $y_1,\ldots,y_m$ may be repeated. Construct a graph $H$ from $H_1$ and $H_2$ by adding edges $z_1z_2$ and $u_1u_2$ and adding a new vertex $w$ and edges $wz_1$ and $wz_2$.
We construct a biased graph $\Omega$ from $H$ as follows. The underlying graph $||\Omega||$ is obtained from $H$ by adding edges $g_1=wu_1$ and $g_2=wu_2$ and edges $f_i=x_iy_i$ for every $i \in [m]$. Denote by $F$ the set of edges $\{f_1,\ldots,f_m\}$.
Every cycle contained in $H$ is declared to be balanced.
For every $f \in F \cup \{g_1,g_2\}$, every $f$-cycle for $H$ is unbalanced.
Every $\{g_1,g_2\}$-cycle for $H$ is balanced and so is every $\{f_i,f_j\}$-cycle for $H-u_1u_2$, for all distinct $f_i,f_j \in F$.
Finally every $\{g_i,f_j\}$-cycle for $H-u_1u_2$ is unbalanced.
The bias of the other cycles in $\Omega$ may be chosen arbitrarily, as long as the theta property is preserved.
We call a biased graph constructed in this fashion a {\em projective planar biased graph with a special vertex} (see the left of Figure~\ref{fig:ppsv}).

\subsection{Projective planar with a special pair}
Consider a planar graph $(H,(x,y,X,Y))$, for some $X,Y \subseteq V(H)$ (sharing at most one vertex).
We construct a biased graph $\Omega$ from $H$ as follows.
The underlying graph  $||\Omega||$ is obtained from $H$ by adding the following edges:
\begin{itemize}
    \item edges $xx'$ for every $x' \in X$; denote by $F_x$ the set of edges added this way;
    \item edges $yy'$ for every $y' \in Y$; denote by $F_y$ the set of edges added this way;
    \item possibly adding edges $e_1,\ldots,e_{\ell}$ between $x$ and $y$.
\end{itemize}
We declare $H$ to be balanced, while $F_x$ and $F_y$ are 2-balanced for $H$.
Every $e_i$-cycle for $H$ is unbalanced. The bias of the other cycles in $\Omega$ may be chosen arbitrarily, as long as the theta property is preserved.
We call a biased graph constructed in this fashion a {\em projective planar biased graph with a special pair} (see the middle of Figure~\ref{fig:ppsv}).

\subsection{Projective planar with a special triple}\label{sec:ppst}
Consider a planar graph $(H,(y_1,x,y_2,X))$, for some $X \subseteq V(H)$.
We construct a biased graph $\Omega$ from $H$ as follows. The underlying graph  $||\Omega||$ is obtained from $H$ by adding the following edges:
\begin{itemize}
    \item edges $xx'$, for every $x' \in X$; denote by $F$ the set of edges added this way;
    \item edges $e_1,\ldots,e_{n}$ between $x$ and $y_1$;
    \item possibly edges $g_1,\ldots,g_{m}$ between $x$ and $y_2$;
    \item an edge $f=y_1y_2$.
\end{itemize}
We declare $H$ to be balanced, while $F$ is 2-balanced for $H$.
Every $e_i$-, $g_i$- and $f$-cycle for $H$  is unbalanced. The bias of the other cycles in $\Omega$ may be chosen arbitrarily, as long as the theta property is preserved.
We call a biased graph constructed in this fashion a {\em projective planar biased graph with a special triple} (see the right of Figure~\ref{fig:ppsv}).

\subsection{Tricoloured graphs}\label{sec:tricoloured}
All the indices in this definition are modulo 6 and either $I=\{1,2,3\}$ or $I=\{1,3,5\}$. Let $H$ be a 2-connected graph such that:
\begin{itemize}
    \item[(a)] $H$ is the union of connected graphs $H_1,\ldots,H_6$ with $V(H_i) \cap V(H_{i+1})=\{z_i\}$ for every $i \in [6]$
and $z_1,\ldots,z_6$ are all distinct.
    \item[(b)] For every $i \in I$, let $x_i$ be a vertex in $H_i$ and $Y_i$ be a set of vertices in $H_{i+3}$ such that
    \begin{itemize}
    \item If $I=\{1,2,3\}$, then $(H,(x_1,x_2,x_3,Y_1,Y_2,Y_3))$ is planar.
    \item If $I=\{1,3,5\}$, then $(H,(x_1,Y_5,x_3,Y_1,x_5,Y_3))$ is planar.
    \end{itemize}
    (Where we allow at most one of the $Y_i$'s to be empty.)
\end{itemize}
We construct a biased graph $\Omega$ from $H$ as follows. The underlying graph $||\Omega||$ is obtained from $H$ by adding, for every $i \in I$, the set of edges $E_i=\{x_iy\ |\ y \in Y_i\}$.
For every $i \in I$, we declare the graph with edge set $E_i \cup E(H_{i+3})$ to be balanced.
For all distinct $i,j \in I$ and all $f_i \in E_i$ and $f_j \in E_j$, every $\{f_i,f_j\}$-cycle for $H_i \cup H_j \cup H_{i+3}\cup H_{j+3}$ is unbalanced.
The bias of the other cycles in $\Omega$ may be chosen arbitrarily, as long as the theta property is preserved.
In this definition we may replace some of the  $H_i$'s with a single vertex.
If the vertices $x_i$ for $i \in I$ are all distinct,
we call a biased graph constructed in this fashion a {\em tricoloured biased graph} (see Figure~\ref{fig:tric}).

%
%

\section{Linkages and $3$-planar graphs}\label{sec:linkages}
Given four distinct vertices $s_1,s_2,t_1,t_2$ in a graph $G$, we say that two paths $P_1$ and $P_2$ form an {\em $(s_1-t_1,s_2-t_2)$-linkage} if $P_1$ is an $(s_1,t_1)$-path, $P_2$ is an $(s_2,t_2)$-path and $P_1$ and $P_2$ are disjoint.
If $S_1,S_2,T_1,T_2$ are pairwise disjoint sets of vertices of $G$, then we say that $G$ contains an $(S_1-T_1,S_2-T_2)$-linkage if $G$ contains an $(s_1-t_1,s_2-t_2)$-linkage for some $s_1 \in S_1$, $s_2 \in S_2$, $t_1 \in T_1$ and $t_2 \in T_2$.
Independently, Seymour  \cite{Seymour1980293} and Thomassen \cite{thomassen19802} characterised the graphs having no $(s_1-t_1,s_2-t_2)$-linkage.
We state their result using the notation by Yu in~\cite{MR1984647}. We also use other results by Yu; in~\cite{MR1984647} linkages are allowed to be between pairs of vertices that are not necessarily disjoint. We will modify the results in~\cite{MR1984647} according to our setting.

We first need to define 3-planar graphs.
Let $G$ be a graph and let  $\zA=\{A_1,\ldots,A_k\}$ be a (possibly empty) set  of pairwise disjoint subsets of $V(G)$, such that
$N_G(A_i)\cap A_j$ is empty for all $i,j \in [k]$. We define $\proj(G,\zA)$ to be the graph obtained from $G$ by deleting all sets $A_i$ and adding new edges joining each pair of vertices in $N_G(A_i)$.

We say that $(G,\zA)$ is a  {\em $3$-planar} graph if the following hold:
\begin{itemize}
    \item[(a)] $|N_G(A_i)|\leq 3$ for all $i \in [k]$;
    \item[(b)] $\proj(G,\zA)$ is a planar graph and it can be embedded on the plane so that, for each $A_i$ with $|N_G(A_i)|=3$, $N_G(A_i)$ induces a facial triangle.
\end{itemize}

In addition, if $v_1,\ldots,v_n$ are vertices in $G$ such that $v_i \notin A_j$ for all $i \in [n]$ and $j\in [k]$, and $v_1,\ldots,v_n$ occur in this circular order in a face boundary of $\proj(G,\zA)$ (for an embedding as in (b)), then we say that $(G,\zA,(v_1,\ldots,v_n))$ is {\em $3$-planar}. Sometimes we will omit the set $\zA$ and say that $G$ or $(G, (v_1,\ldots,v_n))$ is $3$-planar. The vertices $v_1,\ldots,v_n$ do not need to be all distinct in this definition.
If $(G,\zA,(v_1,\ldots,v_n))$ is 3-planar for some empty set $\zA$, then we say that $(G,(v_1,\ldots,v_n))$ is {\em planar}.

Given two disjoint sets $X$ and $Y$ of vertices in a graph $G$, we say that $(G, (X,Y))$ is {\em $3$-planar} if there is some ordering $(x_1,\ldots,x_k)$ of the vertices in $X$ and some ordering $(y_1,\ldots,y_{\ell})$ of the vertices in $Y$ such that $(G, (x_1,\ldots,x_k,y_1,\ldots,y_{\ell}))$ is $3$-planar. If $X=\{x\}$ then we may abuse notation and write that $(G,(x,Y))$ is $3$-planar.
This definition extends to the case when $x_k=y_1$ and/or $y_{\ell}=x_1$. We also extend this notation in the obvious way to the case when we have more than two sets.

\begin{theorem}[Theorem 2.4 in~\cite{MR1984647}]\label{thm:noLinkage}
Let $G$ be a graph and $s_1,t_1,s_2,t_2$ be distinct vertices in $G$. Then $G$ contains no $(s_1-t_1,s_2-t_2)$-linkage if and only if $(G, (s_1,s_2,t_1,t_2))$ is $3$-planar.
\end{theorem}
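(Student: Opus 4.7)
The plan is to prove the two implications separately, with the converse carrying essentially all of the combinatorial content. For ``3-planar $\Rightarrow$ no linkage,'' I would assume $(G,\zA,(s_1,s_2,t_1,t_2))$ is 3-planar and, for contradiction, fix a disjoint linkage $(P_1,P_2)$ in $G$. Since the paths are vertex-disjoint and any two distinct edges of the triangle on $N_G(A_i)$ share a vertex, each $A_i$ is visited by at most one of $P_1,P_2$ and only once; each such visit may then be shortcut through a single triangle edge, producing disjoint paths $P_1',P_2'$ in $\proj(G,\zA)$ with the same endpoints, which lie on a common face in the order $s_1,s_2,t_1,t_2$. Adjoining a new vertex $z$ in this face joined to all four terminals preserves planarity, yet the two cycles $P_1'\cup\{zs_1,zt_1\}$ and $P_2'\cup\{zs_2,zt_2\}$ meet only at $z$ with edges alternating in the rotation $zs_1,zs_2,zt_1,zt_2$ at $z$, which is impossible in a planar embedding.

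For the converse I would induct on $|V(G)|+|E(G)|$, taking $G$ a minimum counterexample. The first round of reductions handles low connectivity: given any vertex cut $S$ with $|S|\le 3$ and a side $H$ containing no terminal in its interior, I would apply the inductive hypothesis to $\proj(G,\{V(H)\setminus S\})$. This projected graph still has no $(s_1-t_1,s_2-t_2)$-linkage, because any such linkage either avoids the triangle edges on $S$ entirely (and lifts trivially back into $G$) or uses at most one of them (since any two share a vertex), in which case it can be lifted through a path in $H$ while keeping the other path intact. Adjoining $V(H)\setminus S$ to the 3-planar family returned by induction gives a 3-planar certificate for $G$. Vertex cuts actually separating the terminals are handled by inducting on each side and gluing the planar embeddings along the cut.

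After these reductions one may assume $G$ is essentially 4-connected with the terminals in a genuinely interacting position, and the core step is to show that such a $G$ is planar with $s_1,s_2,t_1,t_2$ lying in this cyclic order on a common face. I would pick an $(s_1,t_1)$-path $P$ minimising $|V(P)|$ subject to $G-V(P)$ having no $(s_2,t_2)$-path, and analyse the bridges of $P$ in $G$. High connectivity forces each bridge to attach richly to $P$, but any two bridges whose attachments interleave along $P$ would allow a rerouting of $P$ through one bridge that frees an $(s_2,t_2)$-path through the other, contradicting the choice of $P$. Iterating this constraint on the bridges containing $s_2$ and $t_2$ both rules out $K_5$- and $K_{3,3}$-minors in the ``wrong'' configuration and produces the planar embedding with terminals in the required cyclic order.

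The main obstacle is this last step. The rerouting arguments must simultaneously preserve the property that no $(s_2,t_2)$-path appears off $P$ and track the cyclic position of $s_2,t_2$ within the bridges of $P$; balancing these two constraints is the substantive content of the Seymour--Thomassen theorem and accounts for essentially all of the case analysis, whereas the inductive reductions above are entirely routine in comparison.
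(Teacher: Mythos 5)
This statement is \tref{noLinkage}, which the paper does not prove at all: it is imported verbatim as Theorem~2.4 of Yu's paper, i.e.\ the Seymour--Thomassen two-disjoint-paths theorem, and is used as a black box. So there is no ``paper proof'' to compare against; the relevant question is whether your blind attempt actually establishes the theorem, and it does not. Your forward direction (3-planar implies no linkage) is essentially correct as sketched: since the terminals lie outside every $A_i$ and $|N_G(A_i)|\le 3$, each set $A_i$ can be traversed by at most one of the two disjoint paths, the traversal can be shortcut by an edge of the facial triangle on $N_G(A_i)$, and the resulting disjoint paths in $\proj(G,\zA)$ contradict planarity via the standard auxiliary-vertex (or Jordan curve) argument. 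This half is routine and your outline of it is fine.

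The genuine gap is the converse, which is where the entire content of the theorem lives, and your proposal stops exactly at the point where the work begins. The reductions over cuts of size at most three are indeed standard, but the ``core step'' --- showing that a 4-connected graph with no $(s_1-t_1,s_2-t_2)$-linkage is planar with $s_1,s_2,t_1,t_2$ on a common face in that cyclic order --- is asserted rather than proved. Phrases such as ``high connectivity forces each bridge to attach richly to $P$,'' ``iterating this constraint \ldots rules out $K_5$- and $K_{3,3}$-minors in the wrong configuration,'' and your own closing admission that this step ``accounts for essentially all of the case analysis'' make clear that the rerouting/bridge argument has not been carried out. In particular, you never exhibit how interleaving bridges yield a linkage (one must also handle bridges attaching at the terminals, trivial bridges, and the embedding of nonplanar pieces into the sets $A_i$ with exactly three attachments), nor how the analysis terminates. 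As written, the proposal proves the easy implication and defers the hard one to the very theorem being proved; it is a plan, not a proof. Since the paper itself cites this result from the literature, the correct resolution here is either to cite Seymour, Thomassen, or Yu as the paper does, or to supply the full bridge analysis.
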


Yu's paper~\cite{MR1984647} contains other useful results on linkages that we report next.

Let $(G,\zA)$ be $3$-planar and let $A \in \zA$. We say that $A$ is {\em minimal} if there are no nonempty pairwise disjoint subsets $D_1,\ldots,D_k \subset A$ (where $k \geq 2$) such that $(G,(\zA- A)\cup \{D_1,\ldots,D_k\})$ is $3$-planar. If every $A \in \zA$ is minimal, then we say that $\zA$ is minimal.

\begin{lemma}[Proposition 2.6 in~\cite{MR1984647}]\label{lem:link0}
If $(G,(v_1,\ldots,v_n))$ is 3-planar, then there is a collection $\zA$ of pairwise disjoint subsets of $V(G)-\{v_1,\ldots,v_n\}$ such that
$(G,\zA,(v_1,\ldots,v_n))$ is 3-planar and $\zA$ is minimal.
\end{lemma}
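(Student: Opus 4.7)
The plan is to iterate: starting from a family $\mathcal{A}$ that certifies $(G,\mathcal{A},(v_1,\ldots,v_n))$ is $3$-planar (which exists by hypothesis), repeatedly split a non-minimal element. Concretely, if some $A\in\mathcal{A}$ admits pairwise disjoint nonempty subsets $D_1,\ldots,D_k\subset A$ with $k\geq 2$ such that $(G,(\mathcal{A}-A)\cup\{D_1,\ldots,D_k\})$ is $3$-planar, replace $A$ by these $D_i$'s, obtaining a strictly larger family $\mathcal{A}'$.

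Termination is immediate. Every iteration increases $|\mathcal{A}|$ by at least $1$ (since $k\geq 2$), while pairwise disjointness and nonemptiness of the elements of $\mathcal{A}$, together with their inclusion in $V(G)-\{v_1,\ldots,v_n\}$, bound $|\mathcal{A}|\leq|V(G)|$ throughout. Hence the procedure halts after finitely many steps, at which point no element can be split further, so the final family is minimal by construction.

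The main obstacle is that a permissible split was defined purely in terms of pair-$3$-planarity, whereas the conclusion requires triple-$3$-planarity of $(G,\mathcal{A}',(v_1,\ldots,v_n))$ to be preserved at each iteration. To handle this I would fix a planar embedding of $\proj(G,\mathcal{A})$ witnessing triple-$3$-planarity, with $v_1,\ldots,v_n$ appearing in the prescribed cyclic order on the boundary of some face $F$. The passage from $\proj(G,\mathcal{A})$ to $\proj(G,\mathcal{A}')$ is local: one deletes the edges on $N_G(A)$ corresponding to the projection of $A$ (together with the facial triangle on $N_G(A)$ when $|N_G(A)|=3$) and inserts inside that region the projections associated with $D_1,\ldots,D_k$ along with the residual vertices $A\setminus\bigcup_i D_i$. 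Since $v_1,\ldots,v_n\notin A$, the face $F$ lies outside this localized modification (at worst, $F$ runs along $N_G(A)$ and is extended in a way that preserves the cyclic order of the $v_i$'s, using the planar embedding of $\proj(G,\mathcal{A}')$ guaranteed by the split condition). Assembling a global planar embedding of $\proj(G,\mathcal{A}')$ by combining the unchanged part of the original embedding with the local re-embedding then yields the triple-$3$-planarity of $\mathcal{A}'$, closing the induction.
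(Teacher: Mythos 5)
The paper does not actually prove this statement: it is quoted from Yu (Proposition~2.6 of the cited paper), so there is no in-paper argument to measure yours against. Judged on its own terms, your skeleton --- start from any witness $\zA$ (which exists by hypothesis), repeatedly split a non-minimal member, and argue termination plus preservation of the distinguished face --- is the natural one, and the termination count is correct: each split replaces one part by $k\ge 2$ nonempty pairwise disjoint parts, so $|\zA|$ strictly increases while staying bounded by $|V(G)|$.

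The gap is in the surgery paragraph, which is where all the content of the lemma lives. Minimality of $A$ is defined via $3$-planarity of the \emph{pair} $(G,(\zA-A)\cup\{D_1,\ldots,D_k\})$, so the split condition only hands you \emph{some} good embedding of $\proj(G,\zA')$; that embedding is a priori unrelated to the embedding of $\proj(G,\zA)$ carrying $v_1,\ldots,v_n$, need not place $v_1,\ldots,v_n$ on a common face, and says nothing about how the surviving pieces of $A$ sit relative to $N_G(A)$. To ``insert the projections associated with $D_1,\ldots,D_k$ inside that region'' of the old embedding, you must first show that the local graph $\proj\bigl(G[A\cup N_G(A)],\{D_1,\ldots,D_k\}\bigr)$ embeds in a closed disk with $N_G(A)$ on the boundary circle, with each $N_G(D_j)$ of size three bounding a face of that disk embedding. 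This does not follow from planarity of $\proj(G,\zA')$ alone: in the given embedding of $\proj(G,\zA')$ the components of the local graph minus $N_G(A)$ may lie in different faces, and the three vertices of $N_G(A)$ need not even bound a common face there; one must argue, using $|N_G(A)|\le 3$ and the fact that $N_G(A)$ separates, that everything can be rearranged into the single face of $\proj(G,\zA)$ bounded by the clique on $N_G(A)$ without violating the facial-triangle condition for any $D_j$ or for the untouched members of $\zA$. Your write-up asserts this compatibility rather than proving it. A secondary point: clique edges on $N_G(A)$ present in $\proj(G,\zA)$ may be absent from $\proj(G,\zA')$, and such an edge can lie on the boundary of the face $F$ carrying $v_1,\ldots,v_n$ (some $v_i$ may itself belong to $N_G(A)$); you need to check explicitly that deleting it and splicing in the inserted graph merges faces in a way that keeps $v_1,\ldots,v_n$ on one face boundary in the same circular order. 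In short: correct strategy and correct termination, but the step ``the local re-embedding exists and is compatible'' is essentially the whole theorem and is currently unproved.
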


\begin{lemma}[Proposition 3.1 in~\cite{MR1984647}]\label{lem:link1}
Let $(G,\zA)$ be $3$-planar, let $A \in \zA$ with $N_G(A)=\{a_1,a_2,a_3\}$ and let $H=G[A\cup N_G(A)]$.
Suppose that $A$ is minimal. Then the following hold:
\begin{itemize}
    \item[(a)] for any proper subset $X \subset N_G(A)$, $H - X$ is connected;
    \item[(b)] for any $x \in A$, $H$ contains an $(x-a_1,a_2-a_3)$-linkage;
    \item[(c)] for any $x \in A$, $N_G(A)$ is contained in a component of $H - x$.
\end{itemize}
\end{lemma}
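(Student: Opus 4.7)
My plan is to prove (a), (b), (c) by contradiction, in each case using the hypothesized obstruction---a proper separator in $H$ for (a) and (c), the absence of the claimed linkage for (b)---to build a strictly finer $3$-planar decomposition of $A$, contradicting minimality. For (a), suppose $X \subsetneq N_G(A)$ and let $C_1,\ldots,C_m$ be the components of $H-X$. Set $D_i := C_i \cap A$. Every nonempty $D_i$ has $N_G(D_i) \subseteq N_G(A)$, hence $|N_G(D_i)| \leq 3$; the candidate refinement replaces $A$ in $\zA$ by the nonempty $D_i$'s. The bulk of the argument is checking that the refined family is still $3$-planar: in the given embedding of $\proj(G,\zA)$ the face bounded by the facial triangle $\{a_1,a_2,a_3\}$ hosts $H - N_G(A)$, and the separator $X$ subdivides this face into regions, one per $C_i$, each with boundary contained in $N_G(A)$. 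Because $X$ is a proper subset, at least two $D_i$ are nonempty, contradicting minimality. Part (c) follows by the same mechanism, with $x$ playing the role of separator and the components of $H-x$ inducing the partition of $A$; $x$ itself is absorbed into the piece containing its neighbours in $A$.

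For (b), assume no $(x-a_1,a_2-a_3)$-linkage in $H$. By \tref{noLinkage} the pair $(H,(x,a_2,a_1,a_3))$ is $3$-planar, and \lref{link0} yields a minimal witness $\zA_1$; fix a planar embedding of $\proj(H,\zA_1)$ in which $x,a_2,a_1,a_3$ appear on one face in this cyclic order. Graft this local embedding into the facial triangle $\{a_1,a_2,a_3\}$ in the given embedding of $\proj(G,\zA)$. The vertex $x$ sits on the boundary of the grafted disk, between $a_3$ and $a_2$; cutting the disk along a curve from $x$ to $a_1$ produces two subsets $D_1, D_2 \subseteq A$, each with neighbourhood contained in a $3$-subset of $\{a_1,a_2,a_3,x\}$. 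Together with the $A_i \in \zA_1$ contained in $A$ as additional pieces, this refines $\zA$ into strictly more parts and so contradicts minimality of $A$.

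The main obstacle in all three parts is the $3$-planarity bookkeeping for the refined family: one must check both the neighbourhood bound $|N_G(D_i)| \leq 3$ and the facial-triangle condition for the glued embedding. Part (b) is the most delicate, as it requires fusing two a priori unrelated planar embeddings---the given one of $\proj(G,\zA)$ and the one of $\proj(H,\zA_1)$ produced by \tref{noLinkage}---into a single planar embedding of the refined projected graph, with $x$ playing the dual role of an interior vertex of $A$ in the original picture and a boundary terminal in the local one.
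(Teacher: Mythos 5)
First, a point of reference: the paper does not prove this lemma at all --- it is quoted verbatim as Proposition~3.1 of Yu~\cite{MR1984647} --- so there is no in-paper proof to measure you against. Your overall strategy (contradict minimality of $A$ by exhibiting a strictly finer $3$-planar family) is the right and standard one, but the execution has genuine gaps in all three parts.

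For (a), your planarity bookkeeping argues as though $H$ were embedded inside the facial triangle; it is not: $G[A]$ need not be planar, and the face of $\proj(G,\zA)$ bounded by $a_1a_2a_3$ is empty after projection. The correct (and much easier) observation is that each $D_i=C_i\cap A$ has $N_G(D_i)\subseteq N_G(A)$, so $\proj(G,(\zA-A)\cup\{D_1,\dots,D_m\})$ is a spanning subgraph of $\proj(G,\zA)$ and inherits planarity and the facial triangles for free. For (c), the step ``$x$ is absorbed into the piece containing its neighbours in $A$'' does not work: $x$ typically has neighbours in several components of $H-x$, and placing $x$ in $D_1$ then gives $N_G(D_1)\cap D_j\neq\emptyset$, violating the defining condition on a $3$-planar family; you also never verify $|N_G(D_i)|\le 3$ in this part. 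The repair is to leave $x$ outside every piece (the paper's definition of minimality does not require the $D_i$ to cover $A$), after which $N_G(D_i)\subseteq(N_G(A)\cap C_i)\cup\{x\}$ has size at most three precisely because no component contains all of $N_G(A)$. For (b), the crux --- ``cutting the disk along a curve from $x$ to $a_1$'' --- is unjustified: $x$ and $a_1$ need not lie on a common face of $\proj(H,\zA_1)$, so any such curve must pass through further vertices, and the two resulting pieces then acquire more than three attachments (possibly including vertices of $A$), so the refined family is not $3$-planar as claimed; this step needs an actual argument. Finally, all three parts tacitly assume the construction produces at least two nonempty subsets of $A$, which fails when $|A|=1$; indeed, for a minimal singleton $A=\{x\}$ with $a_1,a_2,a_3$ pairwise non-adjacent, conclusions (b) and (c) are simply false, which shows that the statement as transcribed here (without whatever ambient connectivity Yu assumes) cannot be established by the minimality argument alone.
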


\begin{lemma}[Proposition 3.2 in~\cite{MR1984647}]\label{lem:link2}
Let $(G,\zA)$ be $3$-planar, where $\zA$ is minimal.
Let $v_1,v_2 \in V(G)$ be distinct.
Let $u_1 \in V(G)$ and define $u_1^*=u_1$ when $u_1 \in V(\proj(G,\zA))$ and $u_1^*$ to be an arbitrary vertex in $N_G(A_1)$ if $u_1 \in A_1$ for some $A_1 \in \zA$.
Let $u_2 \in V(G)$ and define $u_2^*=u_2$ when $u_2 \in V(\proj(G,\zA))$ and $u_2^*$ to be an arbitrary vertex in $N_G(A_2)$ if $u_2 \in A_2 $ for some $A_2\in \zA$.
Suppose that $v_1,v_2,u_1^*,u_2^*$ are all distinct and
\begin{itemize}
    \item[(i)] $A_1 \neq A_2$ if $A_1$ and $A_2$ are both defined, and
    \item[(ii)] $\proj(G,\zA)$ contains a $(v_1-u_1^*,v_2-u_2^*)$-linkage $L^*$.
\end{itemize}
Then $G$ contains a $(v_1-u_1,v_2-u_2)$-linkage $L$ such that, for any vertex $z$ of $\proj(G,\zA)$, $z$ is contained in a path in $L$ if and only if $z$ is contained in a path in $L^*$.
\end{lemma}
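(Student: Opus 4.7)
The plan is to proceed by induction on $|\zA|$, lifting one set $A\in\zA$ at a time. Writing $\zA=\zA'\cup\{A\}$, we have $\proj(G,\zA)=\proj(\proj(G,\zA'),\{A\})$, so the core of the argument is the ``single-$A$'' lifting: given a linkage $L^*$ in $\proj(G,\{A\})$, construct a linkage $L'$ in $G$ using exactly the same vertices of $V(\proj(G,\{A\}))$ as $L^*$ and having the required endpoints; iterating through $\zA$ then yields the claimed $L$. The base case $\zA=\emptyset$ is immediate, since $u_i^*=u_i$ and $L=L^*$ works.

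For the single-$A$ lifting, set $H:=G[A\cup N_G(A)]$, $S:=N_G(A)\cap V(L^*)$, and let $E^*$ denote the set of (fake) clique-edges on $N_G(A)$ used by $L^*$. A short argument using $|N_G(A)|\leq 3$ and the vertex-disjointness of the two paths of $L^*$ shows that $|E^*|\leq 2$, and that if $|E^*|=2$ then the two edges lie on a common path of $L^*$ and share a vertex. The lifting reduces to producing, inside $H$, a small collection of internally vertex-disjoint paths whose union visits exactly $S$ on $N_G(A)$ (together with $u_i$ when $A=A_i$), avoids the vertices of $N_G(A)\setminus S$, and has endpoints matching the segments of $L^*$ that cross or terminate in $A$.

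The required paths are supplied by \lref{link1}. A single fake edge $a_ia_j$ is replaced by applying \lref{link1}(a) with $X=N_G(A)\setminus\{a_i,a_j\}$, which gives an $(a_i,a_j)$-path contained in $A\cup\{a_i,a_j\}$; an extension from $u_i^*$ to $u_i$ in the absence of fake edges uses \lref{link1}(a) with $X=N_G(A)\setminus\{u_i^*\}$. The main obstacle lies in the combined cases, where either $|E^*|=2$ or a fake-edge replacement must be spliced with an extension to $u_i$ when $A=A_i$; in both cases a single path in $H$ must visit three prescribed vertices on $N_G(A)\cup\{u_i\}$ while avoiding the remaining vertex of $N_G(A)$. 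Here \lref{link1}(b) (with $x=u_i$ or a suitable vertex of $A$) supplies an $(x-a_1,a_2-a_3)$-linkage whose two paths can be merged into the desired single path using the connectivity statements of \lref{link1}(a) and (c). Finally, hypothesis (i) guarantees $A_1\neq A_2$, so the extensions to $u_1$ and $u_2$ occur in distinct iterations of the induction and do not interfere with each other; the distinctness of $v_1,v_2,u_1^*,u_2^*$ ensures that the endpoints of $L$ are all distinct.
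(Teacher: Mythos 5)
First, note that the paper does not actually prove \lref{link2}: it is imported verbatim as Proposition~3.2 of Yu's paper~\cite{MR1984647}, so there is no in-paper argument to compare yours against. Your outline --- induct on $|\zA|$, lift one piece $A$ at a time, replace the at most two clique-edges of $N_G(A)$ used by $L^*$ with paths through $A$ supplied by \lref{link1}, and extend into $A_i$ to reach $u_i$ --- is the natural strategy, and your accounting of the fake edges ($|E^*|\le 2$, and if $|E^*|=2$ the two edges are consecutive on one path) is correct.

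There is, however, a genuine gap at exactly the point you flag as ``the main obstacle'' and then dispose of in one sentence. Consider $A=A_1$ with $N_G(A)=\{u_1^*,a,b\}$, where the last edge of the path of $L^*$ ending at $u_1^*$ is the clique-edge $au_1^*$ and $b$ lies on the other path (so must be avoided). After reaching $a$, the lifted path may use no further vertices of $\proj(G,\zA)$ except $u_1^*$, so it is confined to $A\cup\{a,u_1^*\}$, must visit $u_1^*$, and must terminate at $u_1$. What is needed is therefore a $2$-fan in $H-b$ from $u_1^*$ to $\{a,u_1\}$, i.e.\ two paths from $u_1^*$ to $a$ and from $u_1^*$ to $u_1$ sharing only $u_1^*$. \lref{link1}(b) with $x=u_1$ only produces disjoint paths with the wrong pairing (e.g.\ $u_1\to u_1^*$ together with $a\to b$), and \lref{link1}(a) and (c) are pure connectivity statements; none of these yields the required fan, and ``merging'' the two paths of a \lref{link1}(b)-linkage in general changes which endpoints get joined. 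By contrast, your pure $|E^*|=2$ case does merge cleanly: take $x\in A$ adjacent to the middle attachment vertex $a_2$, apply \lref{link1}(b) to get disjoint $x\to a_1$ and $a_2\to a_3$ paths, and concatenate them via the edge $xa_2$. The combined case is where the real content of Yu's Proposition~3.2 lies, and as written your proposal asserts rather than proves it; you would need either a genuine fan/Menger argument exploiting the minimality of $A$ beyond what \lref{link1} states, or simply to cite Yu as the paper does.
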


We conclude this section with some results on linkages and 3-planar graphs. The proofs are similar to the proofs of results in~\cite{MR1984647}.

\begin{lemma}\label{lem:link4}
Let $(G,\zA, (v_1,\ldots,v_n))$ be $3$-planar, where $G$ is $2$-connected and $\zA$ is minimal. Then $G$ contains a cycle $C$ such that $v_1,\ldots,v_n$ appear in $C$ in this circular order.
\end{lemma}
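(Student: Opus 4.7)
My plan is to proceed by induction on $|\mathcal{A}|$. The base case $\mathcal{A}=\emptyset$ is easy: $G$ is then a $2$-connected plane graph with $v_1,\ldots,v_n$ lying on the boundary of some face $F$ in this circular order, and since every face of a $2$-connected plane graph is bounded by a cycle, the boundary cycle of $F$ is the desired cycle.

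For the inductive step I would fix some $A\in\mathcal{A}$, set $G':=\proj(G,\{A\})$ and $\mathcal{A}':=\mathcal{A}\setminus\{A\}$, and transfer the hypotheses to $(G',\mathcal{A}',(v_1,\ldots,v_n))$. The $3$-planarity and the minimality of $\mathcal{A}'$ are immediate; for the $2$-connectedness of $G'$, any cut vertex $w$ of $G'$ must have $N_G(A)$ entirely on one side of $w$ (since the projection adds a clique on $N_G(A)$, which would otherwise cross the cut), and then $w$ would be a cut vertex of $G$ too, contradicting our assumption. Induction gives a cycle $C'$ in $G'$ through $v_1,\ldots,v_n$ in the required cyclic order.

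I then lift $C'$ to a cycle $C$ of $G$ by replacing each projection edge of $A$ used by $C'$ by a path through $H:=G[A\cup N_G(A)]$. The case $|N_G(A)|=1$ does not arise, since $G$ is $2$-connected, and $|N_G(A)|=2$ is handled by the connectivity of $H$. When $N_G(A)=\{a_1,a_2,a_3\}$, the projection edges form a facial triangle and $C'$ uses $k\le 3$ of them. For $k\le 1$, \lref{link1}(a) supplies the replacement path directly. For $k=2$, say $C'$ uses $a_1a_2$ and $a_2a_3$, if $a_2\notin\{v_1,\ldots,v_n\}$ I reroute $C'$ inside $G'$ by replacing its subpath $a_1{-}a_2{-}a_3$ with the single projection edge $a_1a_3$, reducing to the $k=1$ case.

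The main obstacle is the remaining subcase where $k\in\{2,3\}$ and the common vertex $a_2$ of two of the used projection edges is one of the $v_i$'s, so that $a_2$ cannot be shortcut away; then I need an $(a_1,a_3)$-path in $H$ through $a_2$ (or, in the $k=3$ subcase, a cycle in $H$ visiting $a_1,a_2,a_3$). To obtain it I argue by contradiction using the minimality of $A$: if no such path (resp.\ cycle) exists, a Watkins--Mesner-style $2$-vertex-cut analysis of $H$ produces a vertex $x$ such that $\{a_2,x\}$ splits $A$ into nonempty parts $D_1,D_2$ each satisfying $|N_G(D_i)|\le 3$, so that $(\mathcal{A}\setminus\{A\})\cup\{D_1,D_2\}$ is still $3$-planar, contradicting the minimality of $A$. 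The structure of this argument should parallel the proofs of \lref{link1} and \lref{link2} from~\cite{MR1984647}, as the paper explicitly signals.
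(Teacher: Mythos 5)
Your route is genuinely different from the paper's. The paper does no induction: it takes the face boundary $F$ of the \emph{full} projection $\proj(G,\zA)$ containing $v_1,\ldots,v_n$ (a cycle, by $2$-connectivity) and, for each edge $a_1a_2$ of $F$ not in $G$, substitutes an $(a_1,a_2)$-path with interior in the corresponding $A$ (avoiding $a_3$ when $|N_G(A)|=3$). Since paths substituted for edges coming from different sets of $\zA$ have interiors in disjoint sets, they never collide, and the paper tacitly assumes $F$ uses at most one edge of each projection triangle — which is exactly the configuration your peel-one-set-at-a-time induction cannot assume, since your $C'$ is an arbitrary cycle of $\proj(G,\{A\})$ rather than a face boundary. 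So your approach buys nothing over the paper's and forces you to confront the multi-edge cases; on the other hand, you deserve credit for noticing that these cases exist at all. One simplification: your ``main obstacle'' in the $k=2$ subcase needs no new Watkins--Mesner-style argument. Take $x\in A$ a neighbour of $a_2$ (one exists since $a_2\in N_G(A)$); \lref{link1}(b) gives an $(x-a_1,a_2-a_3)$-linkage in $H$, and prepending the edge $a_2x$ to the $(x,a_1)$-path yields the two internally disjoint paths $a_1\to a_2$ and $a_2\to a_3$ with interiors in $A$ that you need.

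The $k=3$ subcase, however, is a genuine dead end: minimality of $A$ does \emph{not} yield a cycle of $H$ through $a_1,a_2,a_3$ with all other vertices in $A$. Take $A=\{x\}$ with $x$ adjacent to $a_1,a_2,a_3$: this $A$ is trivially minimal, yet $H$ contains no such cycle, so no contradiction with minimality can be extracted. Indeed the lemma is false as literally stated in this corner: for $G=K_{2,3}$ with parts $\{x,y\}$ and $\{a_1,a_2,a_3\}$, $\zA=\{\{x\}\}$ and $(v_1,v_2,v_3)=(a_1,a_2,a_3)$, all hypotheses hold ($\proj(G,\zA)=K_4$ embedded with $a_1a_2a_3$ as a face), but no cycle of $K_{2,3}$ meets all three $a_i$. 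This is a defect of the lemma rather than of your proof alone — the paper's own proof silently ignores the possibility that $F$ contains two or three edges of one projection triangle — and it is harmless in the paper's applications, where the designated face carries at least four distinct vertices and hence is never a projection triangle. But your claim that the $k=3$ case follows ``by contradiction using the minimality of $A$'' is wrong as stated; you would need to either add a hypothesis excluding this configuration or observe that it cannot occur in the situations where the lemma is invoked.
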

\begin{proof}
Let $F$ be a face boundary of $\proj(G,\zA)$ containing $v_1,\ldots,v_n$ (in this circular order). Since $G$ is $2$-connected, so is $\proj(G,\zA)$. Therefore $F$ is a cycle of $\proj(G,\zA)$.
Suppose that $e=a_1a_2 \in E(F)$ is not an edge of $G$. Then $a_1,a_2 \in N_G(A)$ for some $A \in \zA$. If $|N_G(A)|=2$, let $P$ be an $(a_1,a_2)$-path in $G[A]$. Otherwise let $N_G(A)=\{a_1,a_2,a_3\}$ and let $P$ be an $(a_1,a_2)$-path in $G[A- a_3]$.
Substituting $e$ with $P$ in $F$ produces a cycle. Repeating this process for every edge of $F$ that is not in $G$ we obtained the desired cycle.
 \end{proof}

\begin{lemma}\label{lem:link3+}
Let $G$ be a $2$-connected graph. Let $k\geq3$ be an integer and $x,y,v_1,v_2,\ldots,v_k$ be distinct vertices in $G$ such that, for all distinct $i,j\in[k]$, there is no $(x-y,v_i-v_j)$-linkage in $G$. Then $\{x,y\}$ is a 2-vertex-cut of $G$ and each $\{x,y\}$-bridge contains at most one of $v_1,v_2,\cdots,v_k$.
\end{lemma}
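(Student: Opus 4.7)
I argue by contradiction. Suppose the conclusion fails; then either $\{x, y\}$ is not a $2$-vertex-cut of $G$, or some $\{x, y\}$-bridge contains two of the $v_i$'s. In either event, two of them---which I may relabel as $v_1$ and $v_2$---lie in a common component of $G - \{x, y\}$, so $G - \{x, y\}$ contains a $(v_1, v_2)$-path $R$. If $\{x, y\}$ is already a $2$-vertex-cut, then some other $\{x, y\}$-bridge supplies an $(x, y)$-path $P$ vertex-disjoint from $R$ (using $2$-connectivity of $G$), producing an $(x-y, v_1-v_2)$-linkage---a contradiction. So from now on assume $G - \{x, y\}$ is connected, so every $v_i$ lies in its unique component.

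Applying \tref{noLinkage} to the absent $(x-y, v_1-v_2)$-linkage and \lref{link0} yields a minimal $\zA$ with $(G, \zA, (x, v_1, y, v_2))$ $3$-planar. Set $H := \proj(G, \zA)$; since $G$ is $2$-connected, so is $H$, and in the planar embedding witnessing $3$-planarity the face carrying $x, v_1, y, v_2$ is bounded by a cycle $F$ with these four vertices in cyclic order. The plan is to use $v_3$ (which exists by $k \geq 3$) to construct an $(x-y, v_a-v_b)$-linkage in $H$ for some $\{a, b\} \subseteq \{1, 2, 3\}$; \lref{link2} will then lift this to a linkage in $G$, contradicting the hypothesis.

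Split on the location of $v_3$. If $v_3 \in V(F)$, then $v_3$ lies on one of the four arcs of $F$ between consecutive members of $\{x, v_1, y, v_2\}$; in each of the four positions two of $\{v_1, v_2, v_3\}$ share an $(x, y)$-arc of $F$, and the short sub-arc of $F$ joining those two (avoiding $\{x, y\}$) together with the opposite $(x, y)$-arc of $F$ forms the required linkage in $H$. If $v_3 \in V(H) \setminus V(F)$, connectedness of $H - \{x, y\}$ (inherited from $G - \{x, y\}$) prevents the bridge of $F$ containing $v_3$ from attaching only at $\{x, y\}$, so some attachment $u$ lies in $V(F) - \{x, y\}$, say on the arc through $v_1$; a path from $v_3$ to $u$ inside this bridge (internally disjoint from $V(F)$) together with the $F$-arc from $u$ to $v_1$ and the opposite $(x, y)$-arc of $F$ through $v_2$ produces the linkage. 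Finally, if $v_3 \in A$ for some $A \in \zA$, then minimality of $A$ combined with \lref{link1}(b) supplies an internal $(v_3-a_1, a_2-a_3)$-linkage in $G[A \cup N(A)]$ for any choice of $a_1 \in N(A)$; working through the cases of how $N(A)$ meets $\{x, y, v_1, \ldots, v_k\}$---and using connectedness of $G - \{x, y\}$ to extend internal paths outside of $A$ when needed---again yields a linkage in $G$, violating the hypothesis.

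The principal obstacle is this last sub-case, $v_3 \in A$: one must align the (at most three) boundary vertices of $N(A)$ against $\{x, y, v_1, \ldots, v_k\}$ in every configuration, and---when $N(A)$ contains none of $v_1, \ldots, v_k$---arrange the extension of the internal $(v_3, a_1)$-path through $G - A - \{x, y\}$ to some $v_i$ so that it stays disjoint from the internal $(x, y)$-path supplied by \lref{link1}(b), which requires careful use of \lref{link2} to combine the internal linkage with the external extension.
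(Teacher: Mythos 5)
Your opening reduction (handling the case where $\{x,y\}$ is already a $2$-vertex-cut with a bridge containing two of the $v_i$, then assuming $G-\{x,y\}$ is connected) is correct, and your Cases 1 and 2 ($v_3$ on the face boundary $F$, resp.\ $v_3 \in V(H)\setminus V(F)$) go through: connectedness of $H-\{x,y\}$ does follow from connectedness of $G-\{x,y\}$, and \lref{link2} lifts the resulting linkage from $H$ to $G$. The genuine gap is your third case, $v_3\in A$ for some $A\in\zA$, which you yourself flag as ``the principal obstacle'' and then do not carry out. As written it is only a plan: you must first argue that $N_G(A)\not\subseteq\{x,y\}$ (this uses connectedness of $G-\{x,y\}$ and $2$-connectivity), then choose $v_3^*\in N_G(A)-\{x,y\}$ and deal separately with $v_3^*\in V(F)$, $v_3^*\notin V(F)$, and the degenerate alignments $N_G(A)\subseteq\{x,y,v_1,v_2\}$ (e.g.\ $N_G(A)=\{x,y,v_1\}$), where the target of the linkage coincides with a vertex of $N_G(A)$ and \lref{link2} cannot be applied off the shelf because its four endpoints are no longer distinct; there one must splice a raw $(v_3,v_1)$-path through $A$ onto a lifted $(x,y)$-path from the opposite arc of $F$ and check the lift avoids $A$. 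None of this is done, so the proof is incomplete at exactly the hardest point.

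The missing idea that makes all of this unnecessary is \lref{link4}: since $(G,(x,v_1,y,v_2))$ is $3$-planar and $G$ is $2$-connected, $G$ itself (not just the projection $H$) contains a cycle $C$ on which $x,v_1,y,v_2$ appear in this circular order. This is how the paper proceeds. With $C$ in hand there is no case distinction on where $v_3$ sits relative to $\zA$: a $(v_3, C-\{x,y\})$-path in $G-\{x,y\}$ ends on one of the two $(x,y)$-arcs of $C$, say the one through $v_1$, and concatenating it with the sub-arc to $v_1$ while using the opposite arc as the $(x,y)$-path gives an $(x-y,v_1-v_3)$- or $(x-y,v_2-v_3)$-linkage directly in $G$, with no lifting required. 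Hence $\{x,y\}$ separates $v_3$ from $C$, so it is a $2$-vertex-cut, and the bridge statement follows as in your first paragraph. You should either invoke \lref{link4} in this way or fully write out the $v_3\in A$ analysis; in its current form the argument does not establish the lemma.
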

\begin{proof}
Since there is no $(x-y,v_1-v_2)$-linkage in $G$, \tref{noLinkage} implies that $(G,(x,v_1,y,v_2))$ is 3-planar.
\lref{link4} implies that $G$ contains a cycle $C$ such that $x,v_1,y,v_2$ appear in $C$ in this circular order.
If there is a $(v_3,C-\{x,y\})$-path in $G$, then $G$ contains either an  $(x-y,v_1-v_3)$-linkage or an $(x-y,v_2-v_3)$-linkage. It follows that $\{x,y\}$ is a 2-vertex-cut of $G$ separating $v_3$ from $v_1$ and $v_2$.
Let $B_1,\ldots,B_n$ be the $\{x,y\}$-bridges of $G$.
Since $B_i-\{x,y\}$ is connected for every $i \in [n]$, if  $B_i$ contains vertices $v_j$ and $v_{\ell}$ (for distinct $j,\ell \in [k]$)
then  $G$ contains an $(x-y,v_j-v_{\ell})$-linkage. It follows that every $\{x,y\}$-bridge contains at most one of $v_1,\ldots,v_k$, and the result holds.
\end{proof}

The following lemma will be used in the proof of Lemma \ref{lem:balanced-sub}.

\begin{lemma}\label{3-ele}
Let $G$ be a $2$-connected graph. Let $X,Y$ be subsets of $V(G)$ with at least three vertices and with $|X\cup Y|\geq 4$. Then for some $x,x'\in X$ and $y,y'\in Y$ with $\{x,x'\}\cap\{y,y'\}=\emptyset$ there is an $(x-x',y-y')$-linkage. 
\end{lemma}

\begin{proof}
Let $x_1,x_2,x_3$ be vertices in $X$ and $y_1,y_2,y_3$ vertices in $Y$ with $\{x_1,x_2\}\cap\{y_1,y_2\}=\emptyset$. Assume that $G$ has no $(x_1-x_2, y_1-y_2)$-linkage. Then Theorem 4.1 and Lemma 4.5 imply that $(G,(x_1,y_1,x_2,y_2))$ is 3-planar and $x_1,y_1,x_2,y_2$ appear in a cycle of $G$ in this order. When $X\cup Y=\{x_1,y_1,x_2,y_2\}$, the lemma is obviously true. So we may assume that $y_3\notin\{x_1,x_2\}$ and $G$ has no  $(x_1-x_2, y_i-y_j)$-linkage for all distinct $i,j\in[3]$. By Lemma 4.6, $\{x_1,x_2\}$ is a 2-vertex-cut of $G$ and each $\{x_1,x_2\}$-bridge contains at most one of $y_1,y_2,y_3$. For $i=1,2,3$, let $B_i$ be the $\{x_1,x_2\}$-bridge containing $y_i$. Without loss of generality we may assume that $x_3 \notin V(B_1) \cup V(B_2)$. Then $G$ contains an $(x_1-x_3, y_1-y_2)$-linkage.
\end{proof}

\begin{lemma}\label{lem:link6}
Let $G$ be a $2$-connected graph, let $X$ and $Y$ be disjoint nonempty sets of vertices of $G$ and let $v_1,v_2 \in V(G) - (X \cup Y)$.
Suppose that for every $x \in X$ and $y \in Y$, $G$ has no $(v_1-v_2,x-y)$-linkage.
Then one of the following occurs.
\begin{itemize}
    \item[(a)] $G$ contains a $2$-separation $(A_1,A_2)$ with $v_1,v_2\in V(A_1)$ and either $X \subset V(A_1)$ and $y \in V(A_2)$ for some $y \in Y$, or $Y \subset V(A_1)$ and $x \in V(A_2)$ for some $x \in X$.
    \item[(b)] $(G,(v_1,X,v_2,Y))$ is $3$-planar.
\end{itemize}
\end{lemma}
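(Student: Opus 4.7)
The plan is to prove this by induction on $|X| + |Y|$, with \tref{noLinkage} serving as the base case when $|X| = |Y| = 1$. For the inductive step I will assume without loss of generality that $|X| \geq 2$, fix some $x^{*} \in X$, set $X' = X \setminus \{x^{*}\}$, and then apply the inductive hypothesis to the pair $(X', Y)$.

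If that application yields the $2$-separation conclusion~(a), a short case analysis shows that in most configurations the same separation already witnesses~(a) for $(X, Y)$: if $Y \subseteq V(A_{1})$ and some $x' \in X' \cap V(A_{2})$ exists, then since $x' \in X$ we are done; and if $X' \subseteq V(A_{1})$ with some $y' \in Y \cap V(A_{2})$ and $x^{*} \in V(A_{1})$, then $X \subseteq V(A_{1})$ and again we are done. The one exceptional configuration is $X' \subseteq V(A_{1})$, $y' \in Y \cap V(A_{2})$, and $x^{*}$ strictly on the $A_{2}$-side of the separation. Here I would invoke \tref{noLinkage} on the pair $(x^{*}, y')$ and combine the resulting $3$-planar structure of $G[A_{2}]$, closed by a virtual edge across the two boundary vertices of the separation, with the $3$-planar structure carried by $G[A_{1}]$, to assemble either a $3$-planar embedding of $(G, (v_{1}, X, v_{2}, Y))$ across the $2$-sum or a new $2$-separation of the form required by~(a).

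Otherwise the inductive hypothesis yields conclusion~(b), so $(G, (v_{1}, X', v_{2}, Y))$ is $3$-planar. Using \lref{link0}, I would fix a minimal $\zA$ realising this and an embedding of $\proj(G, \zA)$ with a face $F$ containing $v_{1}, X', v_{2}, Y$ in the prescribed circular order, and then examine where $x^{*}$ sits in this embedding. If $x^{*}$ lies on $F$ between $v_{1}$ and $v_{2}$ on the arc disjoint from $Y$, the same embedding already witnesses~(b) for $(X, Y)$. Otherwise $x^{*}$ lies in some set $A \in \zA$, in the interior of the drawing, or on $F$ inside the arc meeting $Y$; in each such case I would use the planar layout of $F$, together with \lref{link1} and \lref{link2}, to route a $(v_1-v_2, x^{*}-y)$-linkage for a suitable $y \in Y$, contradicting the hypothesis, unless the obstruction to such routing presents a $2$-vertex-cut of $G$ separating $\{v_{1}, v_{2}\} \cup Y$ from $x^{*}$, which is exactly conclusion~(a) with $Y \subseteq V(A_{1})$ and $x^{*} \in V(A_{2})$.

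The main obstacle is the combinatorial analysis in the ``bad'' positions of $x^{*}$: for each configuration one has to identify the pair of vertices that either carries the forbidden linkage or forms the $2$-vertex-cut, and verify that the resulting separation places $v_{1}$, $v_{2}$ and all of $Y$ on the same side. The minimality of $\zA$ from \lref{link0} and the ability to lift linkages from $\proj(G, \zA)$ back to $G$ via \lref{link2} are what reduce these routing questions to the planar quotient, where the face structure of the embedding makes them tractable.
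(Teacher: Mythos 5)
Your overall strategy is the same as the paper's: induction on $|X|+|Y|$ with \tref{noLinkage} as the base case, deleting one vertex $x^{*}$ from $X$, and splitting on whether the inductive hypothesis returns a $2$-separation or a $3$-planar structure. Your treatment of the $3$-planar branch also matches the paper: take $\zA$ minimal, locate $x^{*}$ relative to the two $(v_1,v_2)$-arcs of the relevant face, either absorb it into the face (giving (b)) or use \lref{link2} to lift a linkage from $\proj(G,\zA)$ back to $G$, with the only escape being a $2$-vertex-cut separating $x^{*}$ from $P_Y$, which yields (a).

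The one genuine gap is in the exceptional sub-case of the separation branch, where $X\setminus\{x^{*}\}\subset V(A_1)$, some $y\in Y$ lies in $V(A_2)-V(A_1)$, and $x^{*}$ also lies strictly on the $A_2$-side. Your plan to ``combine the $3$-planar structure of $G[A_2]$ with the $3$-planar structure carried by $G[A_1]$'' is miscast: in this branch the inductive hypothesis delivered a $2$-separation, not a $3$-planar embedding, so $G[A_1]$ carries no such structure to combine with, and applying \tref{noLinkage} only to the single pair $(x^{*},y)$ says nothing about how the rest of $X$ and $Y$ sit in a common face. The paper's resolution is much more elementary and avoids any gluing of embeddings: since $x^{*}$ and $y$ both lie in $V(A_2)-V(A_1)$ while $v_1,v_2\in V(A_1)$, the absence of a $(v_1-v_2,x^{*}-y)$-linkage forces the boundary of the separation to be exactly $\{v_1,v_2\}$ with $x^{*}$ and $y$ in different $\{v_1,v_2\}$-bridges; one then simply moves the bridge containing $x^{*}$ from $A_2$ to $A_1$, and the resulting partition is again a $2$-separation, now satisfying (a). Without this (or an equivalent) argument, your induction does not close in that sub-case.
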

\begin{proof}
We prove the result by induction on $|X|+|Y|$. If $|X|=|Y|=1$ the result holds by \tref{noLinkage}.
Now suppose that $|X|\geq 2$ and pick some $x \in X$.
By induction one of the following occurs.
\begin{itemize}
    \item[(1)] $G$ contains a $2$-separation $(A_1,A_2)$ with $v_1,v_2\in V(A_1)$ and either
    \begin{itemize}
        \item[(1.1)] $X-\{x\} \subset V(A_1)$ and $y \in V(A_2)$ for some $y \in Y$, or
        \item[(1.2)] $Y \subset V(A_1)$ and $x' \in V(A_2)$ for some $x' \in X-\{x\}$.
    \end{itemize}
    \item[(2)] $(G,\zA,(v_1,X-\{x\},v_2,Y))$ is $3$-planar for some set $\zA$.
\end{itemize}

If (1.2) occurs, then the same separation $(A_1,A_2)$ satisfies (a) for $X$ and $Y$.
Now suppose that (1.1) occurs. If $x \in V(A_1)$, then again $(A_1,A_2)$ satisfies (a) for $X$ and $Y$. So now assume that $x \in V(A_2)-V(A_1)$.
If no vertex of $Y$ is in $V(A_2)-V(A_1)$, then $(A_1,A_2)$ satisfies (a) for $X$ and $Y$. Now suppose some $y \in Y$ is in $V(A_2)-V(A_1)$.
Then either $G$ contains a $(v_1-v_2,x-y)$-linkage (which is not possible), or $V(A_1) \cap V(A_2) =\{v_1,v_2\}$ and $x$ and $y$ are in different $\{v_1,v_2\}$-bridges (since $G$ is 2-connected). Let $B$ be the $\{v_1,v_2\}$-bridge containing $x$ and define $A_1' = A_1 \cup B$ and $A_2'=A_2 - (B-\{v_1,v_2\})$.
Since $y \in V(A_2')-V(A_1')$,  $(A_1',A_2')$ is a $2$-separation of $G$ satisfying (a).

Now suppose that (2) occurs. We may choose $\zA$ to be minimal.
Set $G'=\proj(G,\zA)$ and let $F$ be the face boundary of $G'$ containing $v_1,v_2,X-\{x\}$ and $Y$.
Let $P_X$ be the $(v_1,v_2)$-path in $F$ containing $X-\{x\}$ and $P_Y$ be the  $(v_1,v_2)$-path in $F$ containing $Y$.

If $x \in P_X$, then (b) holds and we are done. Suppose this is not the case.
Define $Z=\{x\}$ if  $x \in V(G')$ and $Z=N_{G'}(A_x)$ if $x \in A_x$ for some $A_x\in \zA$.
Suppose that there exists a path $Q$ in $G'$ (possibly with no edges) joining some $x^* \in Z$ to $P_Y$ and such that $Q$ and $P_X$ are disjoint. Then $F \cup Q$ contains a $(v_1-v_2,y-x^*)$-linkage for any $y \in Y$. By \lref{link2}, $G$ contains a $(v_1-v_2,y-x)$-linkage,
a contradiction.
Thus every path joining some $x^* \in Z$ to $P_Y$ intersects $P_X$. By the planarity of $G'$, this implies that $G'$ contains a $2$-separation $(H_1,H_2)$ such that $\{v_1,v_2\}\subseteq V(H_1)$ and such that $P_Y$ is contained in $H_1$ and $Z \subseteq V(H_2)$.
Then $(H_1,H_2)$ naturally extends to a $2$-separation in $G$ satisfying (a).
\end{proof}

For the proof of the next lemma we require some new terminology. This terminology will be used throughout the paper.
Let $C$ be a cycle of a $2$-connected graph $G$ and let $x$ be a vertex of $C$.
A vertex $y\in V(G)$ {\em attaches to $C$ at $x$} if there exists an $(x,y)$-path $P$ of $G$ such that $|V(C)\cap V(P)|=1$.
In this case $x$ is an {\em attachment} of $y$ on $C$.
Given a path $P$ in $C$, we say that $y$ {\em only attaches to $P$} if all the attachments of $y$ on $C$ are in $P$.
Since $G$ is 2-connected, $y$ has only one attachment if and only if $y\in V(C)$.

Let $C$ be a cycle and suppose that $x_1,x_2,\ldots,x_n\in V(C)$ occur on $C$ in this cyclic order.
For any two distinct $x_i$ and $x_j$, $C$ contains two $(x_i,x_j)$-paths.
Let $C[x_i,x_{i+1},\ldots, x_j]$ denote the $(x_i,x_j)$-path in $C$ containing $x_i,x_{i+1},\ldots, x_j$ (and not containing $x_{j+1}$ if $i \neq j+1$), where subscripts are modulo $n$. Such path is uniquely determined when $n \geq 3$. Similarly, set
\[\begin{aligned}
C(x_i,x_{i+1},\ldots, x_j]&=C[x_i,x_{i+1},\ldots, x_j]- x_i, \\
C[x_i,x_{i+1},\ldots, x_j)&=C[x_i,x_{i+1},\ldots, x_j]- x_j,\\
C(x_i,x_{i+1},\ldots, x_j)&=C[x_i,x_{i+1},\ldots, x_j]- \{x_i,x_j\}.
\end{aligned}\]

\begin{lemma}\label{lem:link5}
Let $G$ be a $2$-connected graph and let $x_1,\ldots,x_n,y_1,\ldots,y_n$ be distinct vertices of $G$, with $n \geq 2$.
Suppose that $G$ contains no $(x_i-y_i,x_j-y_j)$-linkage, for all distinct $i,j \in [n]$.
Then, up to a reordering of $[n]$ and swapping some $x_i$ with $y_i$, $(G,(x_1,\ldots,x_n,y_1,\ldots,y_n))$ is 3-planar.
\end{lemma}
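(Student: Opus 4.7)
We proceed by induction on $n$. The base case $n=2$ is exactly \tref{noLinkage}.

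For the inductive step ($n\ge 3$), apply the inductive hypothesis to $(x_1,y_1),\ldots,(x_{n-1},y_{n-1})$: after reindexing in $[n-1]$ and swapping some $x_i$ with $y_i$, the tuple $(G,(x_1,\ldots,x_{n-1},y_1,\ldots,y_{n-1}))$ is 3-planar. Using \lref{link0}, fix a minimal realizing collection $\zA$, set $G^*:=\proj(G,\zA)$, and let $F$ be the face boundary of $G^*$ on which the $2(n-1)$ marked vertices appear in the given cyclic order. Since $G^*$ is 2-connected, $F$ is a cycle, and by \lref{link4}, $G$ itself contains a cycle $C$ on which $x_1,\ldots,x_{n-1},y_1,\ldots,y_{n-1}$ appear in this same cyclic order. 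Denote by $P_x$ the arc of $C$ from $y_{n-1}$ to $y_1$ through $x_1,\ldots,x_{n-1}$ (with the endpoints $y_{n-1},y_1$ removed), and by $P_y$ the complementary open arc.

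The first task is to reduce to the situation $x_n,y_n\in V(C)$. If $x_n$ lies inside some $A\in\zA$, then \lref{link1} applied to $G[A\cup N_G(A)]$ produces a rich collection of internal linkages at $x_n$, and \lref{link2} transfers these through the rest of the graph. Combining this with the minimality of $\zA$, I would refine $\zA$ by replacing $A$ with strictly smaller subsets, eventually removing $x_n$ from every element of $\zA$; the analogous argument handles $y_n$. Having arranged $x_n,y_n\in V(G^*)$, I would then show that no vertex of $V(G^*)\setminus V(F)$ can be $x_n$ or $y_n$: otherwise, paths from such a vertex back to $V(F)$ given by 2-connectedness, combined with appropriate subarcs of $C$ and an application of \lref{link2}, would produce a forbidden $(x_j-y_j,x_n-y_n)$-linkage in $G$. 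Hence $x_n,y_n\in V(C)$.

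The second task is to show that, up to swapping $x_n\leftrightarrow y_n$, we have $x_n\in P_x$ and $y_n\in P_y$. Suppose for contradiction that both $x_n$ and $y_n$ lie in $P_x$. The subarc $C[x_n,y_n]\subseteq P_x$ is an $(x_n,y_n)$-path avoiding every $y_j$. For any $j\in[n-1]$, the cycle $C$ also contains an $(x_j,y_j)$-arc $Q_j$ routed through $P_y$ (namely, the arc traversing the complement of $C[x_n,y_n]$ from $x_j$ toward $y_j$), which is internally disjoint from $C[x_n,y_n]$. The pair $(C[x_n,y_n],Q_j)$ is then an $(x_n-y_n,x_j-y_j)$-linkage, contradicting the hypothesis. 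Symmetrically, both $x_n,y_n\in P_y$ is impossible. Thus they sit on opposite arcs; after possibly swapping $x_n$ with $y_n$, we have $x_n\in P_x$ and $y_n\in P_y$. Inserting $x_n$ into the $x$-segment and $y_n$ into the $y$-segment at their positions on $C$ preserves 3-planarity and yields the desired cyclic order $(x_1,\ldots,x_n,y_1,\ldots,y_n)$, up to a final reindexing in $[n]$.

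The main obstacle is the reduction step, which demands a careful combination of \lref{link1} and \lref{link2}—much in the spirit of the proof of \lref{link6}—to refine $\zA$ without disturbing the 3-planarity already arranged for the first $2(n-1)$ marked vertices. The concluding placement argument, by contrast, is a direct combinatorial observation on the cycle $C$.
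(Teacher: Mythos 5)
Your overall strategy mirrors the paper's (induction on $n$, base case \tref{noLinkage}, projecting via a minimal $\zA$ and working on a face cycle through the first $2(n-1)$ terminals), but both of your key steps contain genuine gaps, and the second one is the heart of the lemma. The placement argument in your ``second task'' does not work. First, $(C[x_n,y_n],Q_j)$ need not be a linkage: $Q_j$ starts at $x_j$, and whenever $x_j$ lies on the subarc $C[x_n,y_n]$ the two paths share the vertex $x_j$ --- ``internally disjoint'' is not enough. Second, the statement you try to refute is false: with $P_x$ as you define it, $x_n$ may lie in the gap $C(y_{n-1},x_1)$ and $y_n$ in the gap $C(x_{n-1},y_1)$; then both lie on $P_x$, yet for every $j$ the pairs $\{x_n,y_n\}$ and $\{x_j,y_j\}$ interleave on $C$, so no linkage is forced, and the conclusion of the lemma holds after swapping $x_n$ with $y_n$. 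No contradiction is available there. Third, even granting ``one terminal on each arc,'' the conclusion requires the position of $x_n$ among $x_1,\ldots,x_{n-1}$ to match the position of $y_n$ among $y_1,\ldots,y_{n-1}$, since the reordering of $[n]$ acts on both blocks simultaneously; ``inserting each at its position on $C$'' does not automatically produce an order of the form $(x_{\sigma(1)},\ldots,x_{\sigma(n)},y_{\sigma(1)},\ldots,y_{\sigma(n)})$. The core of the paper's proof is exactly a claim pinning all attachments of $X_n$ to the single segment $C[x_{n-1},y_1]$ and all attachments of $Y_n$ to $C[y_{n-1},x_1]$, established by a case analysis on where some $x^*\in X_n$ first attaches; this step is absent from your proposal.

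Your ``first task'' is also only sketched, and the plan is the wrong one: when $x_n$ lies in a bag $A\in\zA$, or is separated from the face cycle by a small cut, one cannot in general refine $\zA$ so as to expose $x_n$ on a cycle of $G$ through the other $2(n-1)$ terminals while preserving 3-planarity. The paper never exposes $x_n$: it runs the attachment argument with $X_n=N_G(A_{x_n})$ standing in for $x_n$ (using \lref{link2} to transfer linkages back to $G$), and only at the end builds a new bag $B-Z-\{x_n\}$, for a suitable $2$-cut $Z$ on the relevant segment, whose neighbourhood is contained in $Z\cup\{x_n\}$; this forces $x_n$ onto the face of the new projection. That reconstruction is an essential ingredient, not an optional refinement, and the case where $x_n$ and $y_n$ lie in the same bag must also be excluded explicitly (the paper does so via \lref{link1}).
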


\begin{proof}
We prove this by induction on $n$. When $n=2$, the lemma follows from \tref{noLinkage}.
So we may assume that $n\geq3$ and the result holds for $n-1$. Therefore,
there is a collection $\zA$ of pairwise disjoint subsets of $V(G)-\{x_1,\ldots,x_{n-1}, y_1,\ldots, y_{n-1}\}$ such that
$(G,\zA,(x_1,\ldots,x_{n-1}, y_1,\ldots, y_{n-1}))$ is 3-planar. We may choose $\zA$ to be minimal.
Since $\zA$ is minimal, if $x_n$ and $y_n$ both belong to a same set $A \in \zA$, then, by \lref{link1}(a), $G$ contains an $(x_1-y_1, x_n-y_n)$-linkage.
Thus we may assume that $x_n$ and $y_n$ do not belong to a same set $A \in \zA$.

Let $H=\proj(G,\zA)$. Since $G$ is $2$-connected, so is $H$.
Let $C$ be the face boundary of $H$ containing $x_1,\ldots,x_{n-1}, y_1,\ldots, y_{n-1}$ (in this circular order).
Define $X_n=\{x_n\}$ if $x_n \in V(H)$ and $X_n=N_{G'}(A_{x_n})$ if $x_n \in A_{x_n}$ for some $A_{x_n}\in \zA$.
Similarly, define $Y_n=\{y_n\}$ if $y_n \in V(H)$ and $Y_n=N_{G'}(A_{y_n})$ if $y_n \in A_{y_n}$ for some $A_{y_n}\in \zA$.

Suppose that $H$ contains an $(x^*-y^*,x_i-y_i)$-linkage for some $x^* \in X_n$, $y^* \in Y_n$ and $i \in [n-1]$.
Then by \lref{link2}, $G$ contains an $(x_n-y_n,x_i-y_i)$-linkage, a contradiction.

\begin{claim}
We may assume that every vertex in $X_n$ only attaches to $C[x_{n-1},y_1]$ and every vertex in $Y_n$ only attaches to $C[y_{n-1},x_1]$.
\end{claim}
\begin{cproof}
By possibly swapping some $x_i$ with $y_i$ and changing subscripts appropriately, we may assume that some $x^* \in X_n$ attaches to $C(x_{n-1},y_1]$.

\textbf{Case 1:} $x^*=y_1$. Then every vertex in $Y_n$ attaches only to $C[y_{n-1},x_1,x_2]$.
Since $G$ is 2-connected and $y_n \notin \{x_1,\ldots,x_n,y_1,\ldots,y_{n-1}\}$,
for any $z \in \{x_1,\ldots,x_n,y_1,\ldots,y_{n-1}$\} the set $Y_n -\{z\}$ is non-empty.
Thus, if there exists a vertex $x' \in X_n$ which attaches to a vertex not in $C[x_{n-1},y_1,y_2]$, then $H$ contains either an
$(x'-y^*,x_2-y_2)$-linkage or an $(x'-y^*,x_{n-1}-y_{n-1})$-linkage for some $y^* \in Y_n$, a contradiction.
Therefore every vertex in $X_n$ attaches only to $C[x_{n-1},y_1,y_2]$.
Moreover, if there are vertices $x',x''\in X_n$ such that $x'$ attaches to $C[x_{n-1},y_1)$ and $x''$ attaches to $C(y_1,y_2]$, then
$H$ contains either an $(x'-y^*,x_1-y_1)$-linkage or an $(x''-y^*,x_1-y_1)$-linkage for some $y^* \in Y_n$, again a contradiction.
Thus we may assume that all vertices in $X_n$ attach only to $C[x_{n-1},y_1]$.
This, together with the fact that $X_n$ contains a vertex other than $y_1$, forces all the vertices in $Y_n$ to attach only to $C[y_{n-1},x_1]$, and the claim holds.

\textbf{Case 2:} $x^*\neq y_1$.
Arbitrarily choose $y^*\in Y_n$; then $y^*$ only attaches to $C[y_{n-1},x_1]$ (otherwise $G$ contains either an  $(x^*-y^*,x_1-y_1)$-linkage or an $(x^*-y^*,x_{n-1}-y_{n-1})$-linkage, a contradiction). If some $y^* \in Y_n$ attaches to $C(y_{n-1},x_1)$, then the symmetric argument shows that every vertex in $X_n$ attaches only to $C[x_{n-1},y_1]$. Otherwise $x_1 \in Y_n$, and we conclude by the symmetric argument to the one in Case 1.
\end{cproof}

If $x_n,y_n \in V(C)$, then we are done by the claim.
Now suppose that $x_n \notin V(C)$ and $y_n \in V(C)$. Let $x^* \in X_n$; since $x^*$ only attaches to $C[x_{n-1},y_1]$, by the planarity of $H$ there exists a $2$-vertex-cut $Z \subseteq V(C[x_{n-1},y_1])$ such that $x^*$ and $C(y_1,\ldots,y_{n},x_1,\ldots,x_{n-1})$ are in different $Z$-bridges. Since the vertices in $X_n$ are pairwise adjacent, $x$ and the other vertices in $X_n$ are in the same $Z$-bridge $B^*$.
The $2$-vertex-cut $Z$ is also a $2$-vertex-cut of $G$; let $B$ be the $Z$-bridge in $G$ corresponding to $B^*$.
If $B-Z-\{x_n\} \neq\emptyset$ set
$$\zA'=(\zA-\{A\in\zA |\ A\subseteq V(B)\})\cup\{B-Z-\{x_n\}\},$$
otherwise set $\zA '=\zA$. Since $|N_G(A)|\leq3$ for every $A\in \zA'$ and $Z \subseteq V(C[x_{n-1},y_1])$, we have that $(G,\zA',(x_1,\ldots,x_n,y_1,\ldots,y_n))$ is 3-planar.

Now suppose that both $x_n$ and $y_n$ are not in $C$. Then we may apply a similar argument to the one above, once for $x_n$ and once for $y_n$ and obtain a new set $\zA'$ such that $(G,\zA',(x_1,\ldots,x_n,y_1,\ldots,y_n))$ is 3-planar.
\end{proof}

\section{Small separations}\label{sec:separations}

In this section we will show that we can reduce our problem to the case where the tangled biased graph is $4$-connected.
To do so we will show that if $\Omega$ is not $4$-connected then either $\Omega$ is a generalised wheel, or we can obtain $\Omega$ as a $1$-, $2$- or $3$-sum of a balanced graph and a tangled biased graph.
We first need to define summing operations on biased graphs.

Let $\Omega_1=(G_1,\bal_1)$ and $\Omega_2=(G_2,\bal_2)$ be two biased graph, where $\Omega_2$ is balanced.
Suppose that both $\Omega_1$ and $\Omega_2$ contain a balanced $K_t$ subgraph, for some $t\in[3]$ and $|V(\Omega_1)|,|V(\Omega_2)| > t$. Then the graph $G_1 \oplus_t G_2$ is the graph obtained from $G_1$ and $G_2$ by identifying the common $K_t$ and deleting the edges of $K_t$.
We define $\bal=\bal_1\oplus_t \bal_2$ as follows.
If $t=1$, then $\bal$ is just the union of $\bal_1$ and $\bal_2$.
If $t=2$, let $e$ be the edge in the $K_t$.
Then $\bal=\{C\in \bal_1 \cup \bal_2 : e \notin C\} \cup \{(C_1 \cup C_2)\bs e : e\in C_1\in \bal_1, e\in C_2 \in \bal_2\}$.
If $t=3$, let $F$ be the edge set of the $K_t$. Then
$\bal$ is the union of the set $\{C\in \bal_1 \cup \bal_2 : C \cap F =\emptyset\}$ and, for every $e\in F$, the sets of the form  $\{(C_1 \cup C_2)\bs e: C_1\in \bal_1, C_2 \in \bal_2, C_1\cap F= C_2\cap F=\{e\}\}$.
Finally we define $\Omega_1 \oplus_t \Omega_2$ as $(G_1\oplus_t G_2,\bal_1 \oplus_t \bal_2)$. It is easy to check that, since $\Omega_2$ and the $K_t$ are balanced, $\Omega_1 \oplus_t \Omega_2$ is a biased graph.
We say that $\Omega_1 \oplus_t \Omega_2$ is the {\em $t$-sum} of $\Omega_1$ and $\Omega_2$ on $V(K_t)$.

\begin{lemma}\label{lem:1sep}
Let $\Omega=(G,\bal)$ be a tangled biased graph and suppose that $x$ is a cutvertex of $G$. Then $\Omega$ is a $1$-sum of a tangled biased graph and a balanced graph.
\end{lemma}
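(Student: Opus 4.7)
The plan is to split $G$ at the cutvertex $x$ into two subgraphs $G_1$ and $G_2$ with $V(G_1)\cap V(G_2)=\{x\}$ and $|V(G_i)|\geq 2$, and then argue that exactly one of the corresponding induced biased subgraphs inherits the ``tangled'' structure of $\Omega$ while the other is forced to be balanced. Since the 1-sum operation simply identifies a single vertex and takes the union of balanced-cycle families, establishing $\Omega=\Omega_1\oplus_1\Omega_2$ will then reduce to the observation that each cycle of $G$ lives entirely in $G_1$ or entirely in $G_2$ (because a cycle is 2-connected, so it cannot pass through a cutvertex of the ambient graph while using edges on both sides).

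The key step is to show that at most one side contains an unbalanced cycle. Because $\Omega$ is tangled, it is not balanced and $x$ is not a blocking vertex, so there exists an unbalanced cycle $D$ that avoids $x$; without loss of generality $D\subseteq G_1$. I would then observe that any unbalanced cycle $D'\subseteq G_2$ must be vertex-disjoint from $D$, since $V(D)\subseteq V(G_1)\setminus\{x\}$ is disjoint from $V(G_2)$. This would contradict the assumption that $\Omega$ has no two disjoint unbalanced cycles, so $G_2$ carries only balanced cycles and $\Omega_2:=\Omega[E(G_2)]$ is balanced.

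Next I would check that $\Omega_1:=\Omega[E(G_1)]$ is tangled. It contains the unbalanced cycle $D$, so it is not balanced; it inherits ``no two disjoint unbalanced cycles'' as a subgraph of $\Omega$; and any blocking vertex of $\Omega_1$ would also block every unbalanced cycle of $\Omega$ (since all such cycles lie in $G_1$ by the previous step), contradicting the assumption on $\Omega$. Finally, since every cycle of $\Omega$ is entirely in $G_1$ or $G_2$ and all cycles of $\Omega_2$ are balanced, the bias collection of $\Omega$ coincides with $\bal_1\oplus_1\bal_2$ and $||\Omega||=G_1\oplus_1 G_2$, which yields $\Omega=\Omega_1\oplus_1\Omega_2$.

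There is no serious obstacle here; the only care needed is the initial combinatorial observation that cycles respect the cutvertex decomposition, and the clean use of the no-blocking-vertex hypothesis to \emph{locate} an unbalanced cycle avoiding $x$, which is what prevents the other side from carrying an unbalanced cycle in the first place.
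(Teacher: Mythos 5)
Your proof is correct and follows essentially the same route as the paper: both arguments rest on the facts that every cycle lies entirely on one side of the cutvertex, that the no-blocking-vertex hypothesis forces an unbalanced cycle avoiding $x$ (equivalently, prevents two sides from being unbalanced), and that balancedness and blocking vertices of the unbalanced side would lift to $\Omega$. The only cosmetic difference is that the paper phrases the decomposition in terms of all $\{x\}$-bridges rather than a two-part split, which changes nothing of substance.
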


\begin{proof}
Let $\Omega_1,\ldots,\Omega_k$ be the bridges of $\{x\}$. Suppose that two of these bridges are unbalanced. Since two bridges have only the vertex $x$ in common, all unbalanced cycles of $\Omega$ contain $x$.
This is not possible, since $\Omega$ has no blocking vertex. Hence we may assume that $\Omega_2,\ldots,\Omega_k$ are balanced,
so $\Omega$ is the $1$-sum of $\Omega_1$ and the balanced biased graph $\Omega_2\cup\cdots\cup\Omega_k$.
Clearly $\Omega_1$ has no two disjoint unbalanced cycles. Moreover, if $\Omega_1$ is balanced, or contains a blocking vertex, then so does $\Omega$. It follows that $\Omega_1$ is tangled, and $\Omega$ is a $1$-sum of a tangled biased graph and a balanced graph.
\end{proof}

\begin{lemma}\label{lem:2sep}
Let $\Omega=(G,\bal)$ be a tangled biased graph and suppose that $\{x_1,x_2\}$ is a $2$-vertex-cut of $G$. Then $\Omega$ is a $2$-sum of a tangled biased graph and a balanced graph.
\end{lemma}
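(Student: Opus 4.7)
The plan is to adapt the strategy of \lref{1sep} to the $2$-vertex-cut setting. By \lref{1sep} I may assume that $G$ is $2$-connected, so each $\{x_1,x_2\}$-bridge $B_i$ of $G$ contains an $(x_1,x_2)$-path. Let $B_1,\ldots,B_k$ be the bridges. The aim is to identify a distinguished bridge, say $B_1$, which carries all of the unbalanced behaviour of $\Omega$, and then take $\Omega_1 := B_1 \cup \{e\}$ (with $e = x_1x_2$ a new virtual edge, with biases on cycles through $e$ chosen below) and $\Omega_2 := (B_2 \cup \cdots \cup B_k) \cup \{e\}$ with every cycle balanced. Unrolling the definition of $\oplus_2$ will then give $\Omega = \Omega_1 \oplus_2 \Omega_2$.

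The heart of the argument is to establish: (i) at most one bridge contains an unbalanced cycle, and (ii) for any distinct $i,j \in \{2,\ldots,k\}$ and any $(x_1,x_2)$-paths $P_i \subseteq B_i$ and $P_j \subseteq B_j$, the cycle $P_i \cup P_j$ is balanced. For (i), since $\Omega$ has no blocking vertex there exist unbalanced cycles $U_1$ and $U_2$ of $\Omega$ avoiding $x_1$ and $x_2$, respectively. Because the components of $G-\{x_1,x_2\}$ meet in $G - x_1$ only through $x_2$, any simple cycle of $G$ avoiding $x_1$ lies inside some $B_a - x_1$; symmetrically $U_2 \subseteq B_b - x_2$. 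If $a \neq b$ then $U_1$ and $U_2$ are vertex-disjoint, contradicting tangled; so $a = b$, and I rename this bridge as $B_1$. If another bridge $B_j$ with $j \neq 1$ contained an unbalanced cycle $C$, the same confinement argument forces $C$ to pass through both $x_1$ and $x_2$, and forces $U_1$ to contain $x_2$ and $U_2$ to contain $x_1$. Writing $C = P \cup P'$ as the union of two internally disjoint $(x_1,x_2)$-paths in $B_j$, I would apply the theta property to $P,P'$ and a carefully chosen $(x_1,x_2)$-path $Q$ in $B_1$: at least one of $P \cup Q$, $P' \cup Q$ is unbalanced, and by choosing $Q$ to avoid either $U_1 - x_2$ or $U_2 - x_1$ (exploiting $2$-connectivity of $G$) one produces a pair of vertex-disjoint unbalanced cycles. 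Fact (ii) follows from a parallel theta-plus-rerouting argument using an unbalanced cycle of $B_1$ together with the hypothetical unbalanced $P_i \cup P_j$.

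Given (i) and (ii), the bias of the cycle $P \cup e$ in $\Omega_1$ (for $P \subseteq B_1$ an $(x_1,x_2)$-path) is declared to match the bias of $P \cup P'$ in $\Omega$ for any $(x_1,x_2)$-path $P'$ in some bridge $B_i$ with $i \geq 2$; (i), (ii), and the theta property inside each $B_i$ ensure independence from the choice of $P'$, and the theta property is preserved in $\Omega_1$. One checks routinely that $\Omega_1$ is tangled: two vertex-disjoint unbalanced cycles of $\Omega_1$ pull back (replacing $e$ by an external $(x_1,x_2)$-path from some $B_i$, $i \geq 2$) to two vertex-disjoint unbalanced cycles of $\Omega$; any blocking vertex of $\Omega_1$ is also a blocking vertex of $\Omega$; and $\Omega_1$ is not balanced since $\Omega$ is not. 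The identity $\Omega = \Omega_1 \oplus_2 \Omega_2$ is then immediate from the definitions.

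The hardest step will be the rerouting in (i) in the delicate case where $U_1$ and $U_2$ share internal vertices of $B_1$ and every unbalanced cycle of $B_j$ passes through both $x_1$ and $x_2$. There the naive candidate pairs $(U_1, C)$ and $(U_2, C)$ still share $x_2$ and $x_1$ respectively, and one must use the theta property together with $2$-connectivity to reroute the unbalanced cycles within $B_1$ onto versions that avoid $x_2$ or $x_1$ as needed, yielding the required vertex-disjoint unbalanced cycles and thereby the contradiction with tangled.
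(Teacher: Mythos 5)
Your fact (ii) is false, and with it the proposed decomposition. Take $V(G)=\{x_1,x_2,a,b,c\}$ with parallel edges $e_1,e_1'$ between $x_1$ and $a$, parallel edges $e_2,e_2'$ between $a$ and $x_2$, a path $P_2=x_1bx_2$ and a path $P_3=x_1cx_2$; make this a signed graph with $e_1,e_2,x_1c$ negative and all other edges positive. The resulting biased graph is simple and tangled (every unbalanced cycle contains $a$ except $P_2\cup P_3$, so no two are disjoint and no vertex is blocking), $\{x_1,x_2\}$ is a $2$-vertex-cut with bridges $B_1$ (through $a$), $B_2=P_2$, $B_3=P_3$, and only $B_1$ contains an unbalanced cycle --- yet $P_2\cup P_3$ is unbalanced. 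So declaring every cycle of $B_2\cup B_3\cup\{e\}$ balanced assigns the wrong bias to $P_2\cup P_3$, which lies entirely inside your $\Omega_2$, and the $2$-sum does not reconstruct $\Omega$. This is precisely why the paper splits off only \emph{one} balanced bridge at a time, keeping $\Omega_1\cup\cdots\cup\Omega_{k-1}$ together on the tangled side: cycles meeting two retained bridges keep their bias from $\Omega$, and no analogue of (ii) is ever needed.

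The hard case of your step (i) is also not closed. Rerouting along balanced cycles preserves bias but gives no control over which vertices the rerouted cycle meets, and an unbalanced cycle of $B_1$ avoiding both $x_1$ and $x_2$ need not exist (in the example above there is none: $\{x_1,x_2\}$ is a blocking pair even though neither vertex is blocking). Moreover your theta $C\cup Q$ with $Q\subseteq B_1$ only produces an unbalanced cycle through both $x_1$ and $x_2$, which still meets $U_1$ at $x_2$ and $U_2$ at $x_1$. The paper's proof of \clref{2specl1} uses a different theta: write $C=R\cup R'$ as two internally disjoint $(x_1,x_2)$-paths of $B_j$ and take a path $P$ in $B_j-\{x_1,x_2\}$ joining an internal vertex of $R$ to one of $R'$ (it exists because $B_j-\{x_1,x_2\}$ is connected). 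The two cycles of $C\cup P$ other than $C$ lie entirely in $B_j$ and each omits one of $x_1,x_2$, so one is disjoint from $U_1$ and the other from $U_2$; hence both are balanced and $C\cup P$ violates the theta property. With this repair, and with the one-bridge-at-a-time decomposition in place of (ii), the remaining verifications you sketch (well-definedness of the bias of $P\cup e$ via rerouting in the balanced bridge, and tangledness of the retained part) do match the paper's.
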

\begin{proof}
Let $\Omega_1,\ldots,\Omega_k$ be the bridges of $\{x_1,x_2\}$.
\begin{claim}\label{cl:2specl1}
Exactly one of $\Omega_1,\ldots,\Omega_k$ is unbalanced.
\end{claim}

\begin{cproof}
If all the $\{x_1,x_2\}$-bridges are balanced, then either $\Omega$
is balanced or all unbalanced cycles of $\Omega$ use both $x_1$ and
$x_2$ (and $\Omega$ has a blocking vertex). Since $\Omega$ is
tangled, this is not the case. Thus we may assume that $\Omega_1$ is
unbalanced. Now assume by way of contradiction that $\Omega_2$ is
also unbalanced. Then every unbalanced cycle of $\Omega_1$ and
$\Omega_2$ contains $x_1$ or $x_2$.

We claim that every unbalanced cycle of $\Omega_1$ uses both $x_1$
and $x_2$. Assume to the contrary that $\Omega_1$ contains an
unbalanced cycle $C_1$ with $V(C_1)\cap\{x_1,x_2\}=\{x_1\}$. Then
every unbalanced cycle contained in bridges other than $\Omega_1$
uses $x_1$. Since $x_1$ is not a blocking vertex of $\Omega$, there
exists an unbalanced cycle $C_2$ not using $x_1$. Thus, $C_2$ must
be contained in $\Omega_1$ and uses $x_2$. Let $C$ be any unbalanced
cycle contained in $\Omega_2$. Since $C$ must intersect both $C_1$
and $C_2$ and $V(C_1)\cap\{x_1,x_2\}=\{x_1\},
V(C_2)\cap\{x_1,x_2\}=\{x_2\}$, we have that $C$ uses both $x_1$ and
$x_2$. Let $P$ be a path in $\Omega_2- \{x_1,x_2\}$ connecting the
two components of $C- \{x_1,x_2\}$. By the definition of bridge,
such $P$ obviously exists. The two cycles in $C\cup P$ other than
$C$ are balanced, since each one does not intersect one of $C_1$ or
$C_2$. Then $C\cup P$ is a theta subgraph with exactly two balanced
cycles, a contradiction. Hence, every unbalanced cycle of $\Omega_1$
uses both $x_1$ and $x_2$.

Let $C$ be an unbalanced cycle of $\Omega_1$. Let $P$ be a path in $\Omega_1- \{x_1,x_2\}$ connecting the two components of $C- \{x_1,x_2\}$. By the above claim the two cycles in $C\cup P$ other than $C$ are balanced. Thus, $C\cup P$ is a theta subgraph with exactly two balanced cycles, a contradiction.
\end{cproof}

By \clref{2specl1} we may assume that only $\Omega_1$ is
unbalanced. Let $G_1$ be the graph obtained from $||\Omega_1||\cup
\cdots \cup ||\Omega_{k-1}||$ by adding a new edge $e$ between $x_1$
and $x_2$, and $G_2$ be obtained from $||\Omega_k||$ by adding a new
edge $e$ between $x_1$ and $x_2$. Set

\[\begin{aligned}
\bal_1=&\{C\ |\ C\ \text{is\ a\ balanced\ cycle\ of}\ \Omega_1 \cup \cdots \cup \Omega_{k-1}\}\cup\{P\cup e\ |\ P\ \text{is\ an}\ (x_1,x_2) \text{-path\ of}\ G_1-e,\\
&\text{and}\ G_2-e\ \text{has\ an}\ (x_1,x_2) \text{-path}\ Q\ \text{such\ that}\ P\cup Q\ \text{is\ balanced\ in}\ \Omega\},\\
\bal_2=&\{C\ |\ C\ \text{is\ a\ cycle\ of}\ G_2\}.
\end{aligned}\]

Suppose that $P$ is an $(x_1,x_2)$-path contained in $||\Omega_i||$ for some $i\in[k-1]$ and suppose that $P \cup Q$ is a balanced cycle for some $(x_1,x_2)$-path $Q$ in $||\Omega_{k}||$.
Let $Q'$ be any other $(x_1,x_2)$-path in $||\Omega_k||$. Then $Q$ may be obtained from $Q'$ by rerouting on cycles in $||\Omega_k||$. Since $\Omega_k$ is balanced, all such cycles are balanced. Therefore $P\cup Q$ and $P \cup Q'$ have the same bias.
From this, it is easy to verify that with this definition, $\Omega=(G_1,\bal_1)\oplus_2(G_2,\bal_2)$.
By definition, $(G_2,\bal_2)$ is balanced.
To conclude the proof of the lemma it remains to show that $(G_1,\bal_1)$ is tangled.

If $(G_1,\bal_1)$ contains two disjoint unbalanced cycles $C_1$ and $C_2$, then either these are disjoint unbalanced cycles of $\Omega$ (which is not possible) or one of $C_1$ and $C_2$ (say $C_1$) contains $e$. By definition of $\bal_1$, for every $(x_1,x_2)$-path $Q$ in $G_2-e$ we have that $C_1'=C_1-e\cup Q$ is an unbalanced cycle of $\Omega$. Thus $C_1'$ and $C_2$ are disjoint unbalanced cycles in $\Omega$, a contradiction. We deduce that $(G_1,\bal_1)$ has no two disjoint unbalanced cycles.

Now suppose that there is a vertex $v$ in $G_1$ such that $(G_1,\bal_1)- v$ is balanced. Since $\Omega$ has no blocking vertex,
there exists an unbalanced cycle $C$ of $\Omega$ avoiding $v$. Such cycle $C$ cannot be contained in $\Omega_1\cup\cdots\cup\Omega_{k-1}$, since $v$ is a blocking vertex for this biased graph; moreover, $C$ cannot be contained in $\Omega_k$, since this biased graph is balanced.
Therefore $C=P\cup Q$, where $P$ is an $(x_1,x_2)$-path in $||\Omega_i||$ for some $i\in[k-1]$ and $Q$ is an $(x_1,x_2)$-path in $||\Omega_k||$. By definition of $\bal_1$, it follows that $P\cup \{e\}$ is an unbalanced cycle of $(G_1,\bal_1)$ avoiding $v$, a contradiction. This also shows that $(G_1,\bal_1)$ is not balanced (since in this case we may pick $v$ to be any vertex). Therefore $(G_1,\bal_1)$ is tangled and the result holds.
\end{proof}

\begin{lemma}\label{lem:3sep}
Let $\Omega=(G,\bal)$ be a tangled biased graph. Suppose that $G$ is the union of two connected graphs $G_1$ and $G_2$, where $V(G_1) \cap V(G_2)=\{x_1,x_2,x_3\}$, $|V(G_1)|,|V(G_2)|\geq 4$ and $\Omega[G_2]$ is balanced.
Then $\Omega$ is a $3$-sum of a tangled biased graph and a balanced graph. 
\end{lemma}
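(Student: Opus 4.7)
The plan is to mirror the proof of \lref{2sep}: construct two biased graphs $\Omega_1$ and $\Omega_2$, show $\Omega_2$ is balanced and $\Omega_1$ is tangled, and verify $\Omega=\Omega_1\oplus_3\Omega_2$. Let $K$ be a new triangle on $\{x_1,x_2,x_3\}$ with edges $e_{12},e_{23},e_{13}$. I define $\Omega_2$ to have underlying graph $G_1\cup K$ with every cycle balanced, and I define $\Omega_1=(G_2\cup K,\bal_1)$ by declaring $K$ balanced, by letting every cycle of $G_2$ inherit its bias from $\Omega$, and by declaring a cycle $C=\{e_{ij}\}\cup P$ using exactly one edge of $K$ (so $P$ is an $(x_i,x_j)$-path in $G_2$) balanced in $\Omega_1$ iff $P\cup Q$ is balanced in $\Omega$ for some (equivalently, every) $(x_i,x_j)$-path $Q$ in $G_1$; the bias of cycles of $\Omega_1$ using two edges of $K$ is then forced by the theta property through the balanced triangle $K$.

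I first check that $\bal_1$ is well-defined: since $\Omega[G_1]$ is balanced and connected, any two $(x_i,x_j)$-paths $Q,Q'$ in $G_1$ differ by rerouting along balanced cycles, so $P\cup Q$ and $P\cup Q'$ have the same bias in $\Omega$. I then verify that $\Omega_1$ satisfies the theta property by case analysis on how a theta subgraph meets $K$; each case reduces to the theta property of $\Omega$ once $K$-edges are expanded to $G_1$-paths. Matching the biases of cycles of $\Omega$ against those prescribed by the $3$-sum then yields $\Omega=\Omega_1\oplus_3\Omega_2$, and $\Omega_2$ is balanced by construction.

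It remains to show $\Omega_1$ is tangled. It is not balanced, since $\Omega$ has an unbalanced cycle and $\Omega[G_1]$ is balanced, so some unbalanced cycle of $\Omega$ meets $G_2$ and its counterpart in $\Omega_1$ is unbalanced. For the blocking-vertex property, I assume that $v\in V(\Omega_1)=V(G_2)$ intersects every unbalanced cycle of $\Omega_1$ and show that $v$ then intersects every unbalanced cycle of $\Omega$: a cycle contained in $G_2$ does so by hypothesis, while a boundary-crossing cycle of $\Omega$ corresponds to an unbalanced cycle of $\Omega_1$ containing $v$, and $v$ survives in the $\Omega$-cycle either as the endpoint of an expanded $K$-edge (if $v\in\{x_1,x_2,x_3\}$) or in the unchanged $G_2$-part (otherwise), contradicting that $\Omega$ has no blocking vertex. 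Finally, to exclude two disjoint unbalanced cycles $C_1,C_2$ of $\Omega_1$: since the three edges of $K$ pairwise share a vertex, at most one of $C_1,C_2$ uses $K$-edges, so WLOG $C_2\subseteq G_2$; I realize $C_1$ as an unbalanced cycle $C_1'$ of $\Omega$ by substituting each $K$-edge $e_{ij}$ used by $C_1$ with an $(x_i,x_j)$-path in $G_1$ chosen to avoid $V(C_2)\cap\{x_1,x_2,x_3\}$, which by the disjointness of $C_1,C_2$ amounts to avoiding at most the third special vertex.

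The main obstacle lies in this last step in the degenerate case where the remaining special vertex $x_k$ is a cut vertex of $G_1$ separating $x_i$ from $x_j$ while $x_k\in V(C_2)$: then no $(x_i,x_j)$-path in $G_1$ avoids $x_k$ and the direct realization fails. I plan to deal with this case using the cut-vertex structure of $G_1$: either to exhibit an alternative unbalanced cycle of $\Omega$ disjoint from $C_2$ by exploiting the balanced structure of each side of $x_k$ in $G_1$ together with a suitable rerouting of $C_1$ in $\Omega_1$, or to expose a smaller separation of $G$ that has already been handled by \lref{1sep} or \lref{2sep}. Away from this delicate configuration, the proof is a direct adaptation of the argument used for \lref{2sep}.
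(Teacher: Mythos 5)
Your construction is the same as the paper's: add a balanced triangle $K$ on $\{x_1,x_2,x_3\}$ to each side, declare a cycle of $G_2\cup K$ using $e_{ij}$ balanced according to the bias of $P\cup Q$ in $\Omega$ for an $(x_i,x_j)$-path $Q$ through the balanced side, check well-definedness by rerouting along balanced cycles, and then verify that the summand containing $G_2$ is tangled. The paper compresses all of the verification into ``it can be checked that $(G_1,\bal_1)$ is indeed a biased graph and that it is tangled,'' so on the routine steps you are, if anything, more explicit than the source.

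The one step you flag as delicate is a genuine obstruction, not a technicality, and neither of your proposed escapes closes it as a proof of the lemma \emph{as stated}. One can build a tangled $\Omega$ meeting all the hypotheses in which $x_k$ is a cutvertex of $G_1$ separating $x_i$ from $x_j$ (e.g.\ $G_1$ a path $x_i\hbox{--}a\hbox{--}x_k\hbox{--}x_j$), $C_2$ is an unbalanced cycle of $G_2$ through $x_k$ avoiding $x_i,x_j$, and $P$ is an $(x_i,x_j)$-path of $G_2$ disjoint from $C_2$ with $P\cup Q$ unbalanced; then $\{e_{ij}\}\cup P$ and $C_2$ really are two disjoint unbalanced cycles of the constructed $\Omega_1$, so no ``alternative realization'' in $\Omega$ exists and your first escape route is closed. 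Your second escape route is the correct observation --- in this configuration $\{x_i,x_k\}$ is a $2$-vertex-cut of $G$ (or $x_k$ is a cutvertex), so \lref{1sep} or \lref{2sep} applies --- but those lemmas conclude that $\Omega$ is a $1$- or $2$-sum, which is not the conclusion of \lref{3sep}. So to finish you must either add the hypothesis that $G$ has no $1$- or $2$-separation (which is how the lemma is actually used in \lref{4conn}, where it is invoked only after the smaller separations have been eliminated), or weaken the conclusion to ``a $t$-sum for some $t\in[3]$.'' With that amendment your argument goes through; without it, the degenerate case you isolated is a real counterexample to the construction, one the paper's own proof does not address.
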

\begin{proof}
Let $\Omega_1,\ldots,\Omega_k$ be the bridges of $\{x_1,x_2,x_3\}$ in $\Omega$. One of these bridges, say $\Omega_k$, is balanced. We proceed similarly to the proof of \lref{2sep} to construct two biased graphs $(G_1,\bal_1)$ and $(G_2,\bal_2)$ such that $(G_1,\bal_1)$ is tangled, $(G_2,\bal_2)$ is balanced and $\Omega=(G_1,\bal_1)\oplus_3 (G_2,\bal_2)$.
Let $G_1$ be the graph obtained from $||\Omega_1||\cup \cdots\cup ||\Omega_{k-1}||$ by adding three new edges $e_{12},e_{23},e_{13}$, where the edge $e_{ij}$ is between $x_i$ and $x_j$.
Let $G_2$ be the graph obtained from $||\Omega_k||$ by adding three new edges $e_{12},e_{23},e_{13}$, where the edge $e_{ij}$ is between $x_i$ and $x_j$.
Set $\bal_2$ to be the set of all cycles in $G_2$. It remains to define $\bal_1$.
The set $\bal_1$ contains all balanced cycles of $\Omega_1 \cup \cdots\cup \Omega_{k-1}$ and the cycle $\{e_{12},e_{13},e_{23}\}$ plus the cycles using $e_{12},e_{13},e_{23}$ which we will discuss next.
Let $Q$ be an $(x_1,x_2)$-path in $||\Omega_k||$.
For every $(x_1,x_2)$-path $P$ in $G_1\bs \{e_{12},e_{13},e_{23}\}$, we add $P\cup \{e_{12}\}$ to $\bal_1$ if and only if $P \cup Q$ is balanced; we add $P\cup \{e_{13},e_{23}\}$ to $\bal_1$ if and only if $P \cup Q$ is balanced and $P$ does not use vertex $x_3$. We define the bias of the other cycles using the three new edges similarly. Since we declared the cycle $\{e_{12},e_{13},e_{23}\}$ to be balanced, it can be checked that $(G_1,\bal_1)$ is indeed a biased graph and that it is tangled.
We leave it to the reader to check that $\Omega=(G_1,\bal_1)\oplus_3 (G_2,\bal_2)$.
\end{proof}

\begin{lemma}\label{lem:4conn}
Let $\Omega=(G,\bal)$ be a simple tangled biased graph. If $\Omega$ is not $4$-connected, then either
\begin{itemize}
    \item $\Omega$ is a $t$-sum of a tangled biased graph and a balanced graph, for some $t\in [3]$, or
    \item $\Omega$ is a generalized wheel.
\end{itemize}
\end{lemma}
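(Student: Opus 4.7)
The plan is to proceed by successive connectivity reductions followed by a structural analysis of a $3$-vertex-cut. First, if $\Omega$ has a cutvertex, then by \lref{1sep} it is a $1$-sum of a tangled biased graph with a balanced graph; similarly \lref{2sep} handles the case of a $2$-vertex-cut. So I may assume $\Omega$ is $3$-connected. Since $\Omega$ is not $4$-connected, there exists a $3$-vertex-cut $X=\{x_1,x_2,x_3\}$; let $B_1,\ldots,B_k$ be the $X$-bridges. By $3$-connectivity $k\geq 2$ and each $B_i$ contains at least one vertex outside $X$.

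\textbf{Case 1: some $X$-bridge $B_j$ is balanced.} Let $G_1=||B_j||$ and let $G_2$ be the union of the $||B_i||$ for $i\neq j$ together with all edges of $||\Omega||$ having both endpoints in $X$. Then $V(G_1)\cap V(G_2)=X$ and, since every bridge has a vertex outside $X$ and $k\geq 2$, we have $|V(G_1)|,|V(G_2)|\geq 4$. Every cycle of $G_1$ is a cycle of $B_j$, which is balanced, so $\Omega[G_1]$ is balanced. Now \lref{3sep} applies and $\Omega$ is a $3$-sum of a tangled biased graph and a balanced graph.

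\textbf{Case 2: every $X$-bridge is unbalanced.} Here I aim to show that $\Omega$ is a generalized wheel. The tangledness of $\Omega$ forces strong constraints: for distinct $i,j$, any unbalanced cycle in $B_i$ must meet any unbalanced cycle in $B_j$ at a vertex of $X$, yet no vertex of $X$ is a blocking vertex. A careful analysis of these constraints should show that, up to relabeling, there is a distinguished vertex $w\in X$ on which the generalized wheel is centered, with the remaining two vertices of $X$ serving as two consecutive cutvertices $z_{i-1},z_i$ in the cycle-of-blocks decomposition of $||\Omega||-w$. I would then iterate the analysis over further $3$-vertex-cuts containing $w$, building up the full decomposition $||\Omega||-w=G_1\cup\cdots\cup G_k$ with cutvertices $z_1,\ldots,z_k$, and verifying the balance condition~(b) and the large-cycle unbalance condition~(c). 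For each non-trivial $G_i$, the bipartition $X_i,Y_i$ of $N_\Omega(w)\cap V(G_i)$ together with the planarity condition~(e) and bias condition~(f) would be extracted by applying \tref{noLinkage} and \lref{link5} to the absence of certain linkages in $G_i$ between neighbors of $w$; such absence of linkages is forced by the no-two-disjoint-unbalanced-cycles hypothesis within the bridge.

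\textbf{Main obstacle.} The hard part is Case 2, where the generalized wheel structure must be explicitly reconstructed. Identifying the hub vertex $w$ from an arbitrary starting $3$-vertex-cut, proving that $||\Omega||-w$ really decomposes as a cycle of balanced $2$-connected pieces glued at distinct cutvertices, and producing the bipartitions $(X_i,Y_i)$ satisfying the planarity property~(e) all require delicate combinatorial work. I expect the proof to rest on a minimality argument over the choice of $3$-vertex-cut combined with repeated application of the linkage results of \sref{linkages}, which are tailored precisely to translate ``no two disjoint paths'' into $3$-planar structure.
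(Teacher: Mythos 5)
Your reduction to the $3$-connected case and your Case~1 (some $X$-bridge balanced, hence a $3$-sum via \lref{3sep}) are correct and agree with the paper, which disposes of these situations in one line. The genuine gap is Case~2, which is the entire substance of the lemma: you state the goal and name the tools you expect to use, but you carry out none of the argument, and the two pivotal structural facts are never identified. The paper's chain is: (i) every unbalanced $X$-bridge contains an unbalanced cycle meeting $X$ in \emph{exactly one} vertex (take an unbalanced cycle meeting $X$ in at least two vertices, join two components of $C-X$ by a minimal path inside the bridge avoiding $X$, and apply the theta property to the resulting theta subgraph); (ii) the cycles so obtained in two different bridges must share their unique $X$-vertex, say $x_1$, and since $x_1$ is not a blocking vertex of $\Omega$ there is an unbalanced cycle avoiding $x_1$ consisting of an $(x_2,x_3)$-path in each of two bridges; a third unbalanced bridge would then contain an unbalanced cycle disjoint from one of these, so \emph{every} $3$-vertex-cut has exactly two bridges and $x_1$ is a blocking vertex of each bridge. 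This is how the hub $w=x_1$ is found --- directly from the intersection pattern of unbalanced cycles, not by the minimality argument over $3$-vertex-cuts that you gesture at.

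From there, $3$-connectivity forces the block tree of each bridge minus $x_1$ to be a path whose end blocks contain $x_2$ and $x_3$, so $G-x_1$ is a cycle of $2$-connected balanced blocks glued at distinct cutvertices, giving conditions (a)--(c) of the definition. For each non-trivial block $H_t$, the bipartition $(X_t,Y_t)$ is \emph{not} obtained by a direct linkage computation as you suggest: it comes from the standard partition of $\delta(x_1)$, and one must show that exactly two of its classes meet the interior of $H_t$ (at least two because the $3$-vertex-cut $\{x_1,z_{t-1},z_t\}$ must have an unbalanced bridge on each side; at most two by \lref{link3+} combined with the fact that every $3$-vertex-cut has exactly two bridges). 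Condition (f) then follows from the definition of the standard partition, and \lref{link6} yields either the planarity condition (e) or a $2$-separation of $H_t$ that produces a $3$-vertex-cut of $G$ with a balanced bridge, a contradiction. None of these steps is supplied in your proposal, so the generalized-wheel case remains unproved as written.
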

\begin{proof}
Suppose that $\Omega$ is not $4$-connected. By \lref{1sep}, \lref{2sep} and \lref{3sep}, we may assume that $\Omega$ is $3$-connected and contains a $3$-vertex-cut $X=\{x_1,x_2,x_3\}$ and all the bridges of $X$ are unbalanced.
Let $\Omega_1,\ldots,\Omega_k$ be the bridges of $X$.

\begin{claim}\label{cl:4connCl1}
For every bridge $\Omega_i$ there exists an unbalanced cycle in $\Omega_i$ using exactly one of $x_1,x_2,x_3$.
\end{claim}
\begin{cproof}
Since $\Omega$ has at least two unbalanced $X$-bridges, each unbalanced cycle must intersect $X$.
Suppose that $C$ is an unbalanced cycle contained in $\Omega_i$ using at least two vertices in $X$.
Let $P$ be a minimal path in $\Omega_i- X$ joining two components of $C- X$. The subgraph $C\cup P$ is a theta subgraph, and by the choice of $P$, no vertex of $X$ is in $P$.
Let $C_1$ and $C_2$ be the cycles in $C\cup P$ other than $C$. By the theta property at least one of $C_1$ and $C_2$ is unbalanced. Therefore one of $C_1$ or $C_2$ is the required cycle, unless one of them, say $C_1$, is unbalanced and contains two vertices in $X$, while $C_2$ is balanced. Thus in this case $C$ contains all of the vertices in $X$. However, $C_1$ is an unbalanced cycle in $\Omega_i$ not using all vertices in $X$, so we may repeat the same argument with $C_1$ in place of $C$, and conclude that there exists an unbalanced cycle $C$ using exactly one of $x_1,x_2,x_3$.
\end{cproof}

The following is an immediate consequence of \clref{4connCl1}.

\begin{claim}\label{cl:4connCl1+}
For any 3-vertex-cut $X'$ of $G$, each $X'$-bridge has an unbalanced cycle using exactly one element of $X'$.
\end{claim}

By \clref{4connCl1}, there exist unbalanced cycles $C_1$ and $C_2$ contained in $\Omega_1$ and $\Omega_2$ respectively, each using exactly one vertex in $X$. Thus $C_1$ and $C_2$ use the same vertex, say $x_1$, of $X$.
Since $x_1$ is not a blocking vertex of $\Omega$, there exists an unbalanced cycle $C$ avoiding $x_1$. Since $C$ intersects both $C_1$ and $C_2$, we have $C=P_1\cup P_2$, where $P_1$ is an $(x_2,x_3)$-path of $\Omega_1$ avoiding $x_1$ and
$P_2$ is an $(x_2,x_3)$-path of $\Omega_2$ avoiding $x_1$.

Now suppose that there is a third $X$-bridge $\Omega_3$. Then $\Omega_3$ contains an unbalanced cycle $C_3$ using at most one vertex in $X$. However, such cycle is disjoint from either $C_1$ or $C$. It follows that the only bridges of $X$ are $\Omega_1$ and $\Omega_2$. Since $X$ was chosen arbitrarily, we have

\begin{claim}\label{cl:4connCl1++}
For any 3-vertex-cut $X'$ of $G$, there are exactly two $X'$-bridges.
\end{claim}

Because of $C_1$, the vertex $x_1$ is a blocking vertex of $\Omega_2$. Similarly, $x_1$ is a blocking vertex of $\Omega_1$.
Let $Q_1$ be any $(x_2,x_3)$-path in $\Omega_1- x_1$. All the cycles in $\Omega_1 - x_1$ are balanced, therefore $Q_1\cup P_2$ has the same bias as $C=P_1\cup P_2$ (i.e. $Q_1\cup P_2$ is unbalanced). The same argument holds if we replace $P_2$ with some other $(x_2,x_3)$-path in $\Omega_2-x_1$. It follows that every $(x_2,x_3)$-path in $\Omega_1$ intersects $C_1$ and
every $(x_2,x_3)$-path in $\Omega_2$ intersects $C_2$.
Since $C_1$ and $C_2$ were arbitrary unbalanced cycles (avoiding $x_2$ and $x_3$), we have the following result.

\begin{claim}\label{cl:4connCl2}
Every unbalanced cycle in $\Omega_i$ intersects every $(x_2,x_3)$-path in $\Omega_i$.
\end{claim}

Next we focus on the structure of $\Omega_1$.
Let $H_1,\ldots,H_n$ be the blocks of $\Omega_1-x_1$.
Suppose that  $n \geq 2$ and $H_i$ is a leaf block that contains neither $x_2$ nor $x_3$ in its interior. Then $E(H_i)$ together with the edges joining $H_i$ to $x_1$ forms a $2$-separation of $G$, a contradiction. Therefore the tree of blocks of $\Omega_1-x_1$ is a path (possibly with only one vertex) and its ends each contain one of $x_2$ and $x_3$ in the interior. We relabel the blocks so that (for every $i \in [n-1]$) $H_i$ and $H_{i+1}$ share a vertex, and $x_2 \in V(H_1)-V(H_2)$ and $x_3 \in V(H_n)-V(H_{n-1})$ (or, if $n=1$, we simply have $x_2,x_3 \in V(H_1)$). For every $i \in [n-1]$, let $z_i$ be the vertex shared by $H_i$ and $H_{i+1}$. Set $z_0:=x_2$ and $z_n:=x_3$.

Recall that $x_1$ is a blocking vertex of $\Omega_1$.
Let $U_1,\ldots,U_{\ell}$ be the parts of the standard partition of $\delta_{\Omega_1}(x_1)$.
For every $i \in [\ell]$, let $Y_i$ be the set of vertices adjacent to $x_1$ with an edge in $U_i$.
Arbitrarily choose $H_t$ such that $H_t$ is not a single edge.
Since $\Omega$ is simple and there are no unbalanced cycles in $H_t$, the block $H_t$ contains at least one vertex other than $z_{t-1}$ and $z_t$.
Therefore $\{z_{t-1},z_t,x_1\}$ is a $3$-vertex-cut of $G$; since $H_t$ contains no unbalanced cycles, \clref{4connCl1+} implies that at least two of $Y_1,\ldots,Y_{\ell}$ intersects $H_t - \{z_{t-1},z_t\}$. We claim that exactly two of the sets $Y_1,\ldots,Y_{\ell}$ intersect $H_t - \{z_{t-1},z_t\}$. Assume to the contrary that there are three distinct $Y_i,Y_j,Y_k$ intersecting $H_t - \{z_{t-1},z_t\}$. Arbitrarily choose $y_i \in Y_i\cap(V(H_t) - \{z_{t-1},z_t\}), y_j \in Y_j\cap(V(H_t) - \{z_{t-1},z_t\})$, and $y_k \in Y_k\cap(V(H_t) - \{z_{t-1},z_t\})$. 
If, say, $y_i=y_j$, then (since $H_t$ is a block) there is a $(z_{t-1},z_t)$-path in $H_t$ disjoint from the unbalanced cycle made of the two parallel $x_1y_i$ edges (one from $U_i$ and one from $U_j$), contradicting \clref{4connCl2}. Thus the vertices $y_i,y_j,y_k$ are all distinct.
By \clref{4connCl2} for any $s,p\in\{i,j,k\}$ every $(y_s,y_p)$-path in $H_t$ intersects every $(z_{t-1},z_t)$-path in $H_t$. That is, $H_t$ contains no $(z_{t-1}-z_t,y_s-y_p)$-linkage. Therefore, by \lref{link3+} $\{z_{t-1},z_t\}$ is a 2-vertex-cut of $H_t$ with at least three $\{z_{t-1},z_t\}$-bridges; consequently, the 3-vertex-cut $\{x_1,z_{t-1},z_t\}$ of $G$ has at least three $\{x_1,z_{t-1},z_t\}$-bridges, a contradiction to \clref{4connCl1++}.

Assume that $Y_i$ and $Y_j$ intersect $H_t - \{z_{t-1},z_t\}$. Set
$$Y_i'=Y_i \cap (V(H_t) - \{z_{t-1},z_t\})\ \textrm{and} \ Y_j'=Y_j \cap (V(H_t) - \{z_{t-1},z_t\}).$$
\lref{link6} implies that either
\begin{itemize}
    \item[(a)] $H_t$ contains a $2$-separation $(A_1,A_2)$ with $z_{t-1},z_t\in V(A_1)$ and either $Y_i' \subset V(A_1)$ or $Y_j' \subset V(A_1)$, or
    \item[(b)] $(H_t,(z_{t-1},Y_i',z_t,Y_j'))$ is $3$-planar.
\end{itemize}
To conclude our proof it remains to show that case (a) does not occur.
Suppose a separation as in (a) exists, with  $Y_i' \subset V(A_1)$. Set $V(A_1)\cap V(A_2)=\{p,q\}$. Let $H'$ be the subgraph of $H_t$ induced by the edges in $A_2$ together with the edges joining $x_1$ to $V(A_2)-\{p,q\}$. Since $Y_i' \subset V(A_1)$, the $\{x_1,p,q\}$-bridge $H'$ of $G$ contains no unbalanced cycle, a contradiction to \clref{4connCl1+}.
\end{proof}

\section{Finding a balanced subgraph}\label{sec:balanced_sub}

\begin{lemma}\label{lem:small-balanced-sub}
Suppose that $\Omega$ is a simple 4-connected tangled biased graph with at least six vertices. 
Then one of the following holds:
\begin{itemize}
	\item $\Omega$ has a $2$-connected balanced subgraph with at least four vertices, or
	\item $\Omega$ is a tricoloured graph.
\end{itemize}
\end{lemma}

\begin{proof}
Let $\Omega'$ be a $2$-connected balanced subgraph of $\Omega$ where $|V(\Omega')|$ is maximal.
By Theorem 1.1, if $\Omega$ has no balanced cycle then either it has at most 5 vertices or it is not 4-connected.
Thus $\Omega$ has at least one balanced cycle, so $|V(\Omega')|\geq3$. Assume that $|V(\Omega')|=3$. Then 
\begin{itemize}
\item[(1.1)] each cycle in $\Omega$ of length at least four is unbalanced, and
\item[(1.2)] each cycle in $\Omega$ sharing some edges with a balanced triangle is unbalanced (by the theta property).
\end{itemize}

{\bf Case 1:} $|V(\Omega)|=6$. 

Set $V(\Omega)=\{u_1,u_2,\ldots,u_6\}$. Let $K_6$ be a complete graph defined on $V(\Omega)$. Since $\Omega$ is 4-connected, by symmetry we may assume that $K_6\del\{u_1u_2, u_3u_4, u_5u_6\}$ is a subgraph of $\Omega$. 
Since $u_1u_3u_5u_1$ and $u_2u_4u_6u_2$ are disjoint cycles, by symmetry we may assume that the former is balanced. Then every other cycle using an edge in this triangle is unbalanced by (1.2).
In particular, $u_2u_3u_5u_2$ is unbalanced. Moreover, since $u_2u_3u_5u_2$ and $u_1u_4u_6u_1$ are disjoint cycles, $u_1u_4u_6u_1$ is balanced; so $u_2u_4u_6u_2$ is unbalanced by (1.2). Using this same strategy several times one can check that there are exactly four balanced cycles in $K_6\del\{u_1u_2, u_3u_4, u_5u_6\}$, which are $u_1u_3u_5u_1$, $u_1u_4u_6u_1$, $u_2u_3u_6u_2$ and $u_2u_4u_5u_2$.

Now assume that $u_1$ and $u_2$ are adjacent in $\Omega$. By (1.2) (applied to the known balanced triangles) all triangles in $K_6\del\{u_3u_4, u_5u_6\}$ containing the edge $u_1u_2$ are unbalanced. Similar results hold when $u_3u_4$ or $u_5u_6$ are in $E(\Omega)$. Since $\Omega$ has no two disjoint unbalanced cycles, at most one edge in $\{u_1u_2, u_3u_4, u_5u_6\}$ is in $\Omega$; so we may assume that either $\Omega$ is $K_6\del\{u_1u_2, u_3u_4, u_5u_6\}$ or it is this graph together with the edge $u_1u_2$. This graph is depicted on the left of Figure~\ref{fig:sporadic}. 
One can see that this graph is a tricoloured graph with (in the definition of tricoloured graph): $I=\{1,2,3\}$, $V(H_1)=\{u_1,u_3,u_5\}$, $V(H_2)=\{u_2,u_3,u_6\}$, $V(H_3)=\{u_2\}$, $V(H_4)=\{u_2,u_4\}$, $V(H_5)=\{u_1,u_4\}$, $V(H_6)=\{u_1\}$ and 
$Y_1=\{u_2,u_4\}$, $Y_2=\{u_1,u_4\}$ and $Y_3$ is either empty or equal to $\{u_1\}$ (depending on whether the edge $u_1u_2$ is present or not).

{\bf Case 2:} $|V(\Omega)|\geq7$. 

Let $C$ be an unbalanced cycle of $\Omega$ with $|V(C)|$ as small as possible. Then $C$ is an induced subgraph of $\Omega$. Moreover, $\Omega-V(C)$ is balanced and therefore all its blocks are isomorphic to $K_1$, $K_2$ or $K_3$. Note that $C$ cannot be a loop (otherwise $\Omega$ would have a blocking vertex) and if $C$ has exactly two vertices $v_1$ and $v_2$ then $\Omega-\{v_1,v_2\}$ is a 2-connected balanced graph with at least four vertices, against our assumptions. 
Therefore $|V(C)|\geq3$. 

\begin{claim}
$|V(C)|=3$.
\end{claim}

\begin{cproof}
Assume that there are edges $f_1,f_2,f_3$ joining some vertex of $\Omega-V(C)$ and $V(C)$. 
By (1.1) and (1.2) at most one cycle contained in $C\cup\{f_1,f_2,f_3\}$ is balanced. Then by the choice of $C$ we have $|V(C)|=3$. So we may assume that each vertex in $\Omega-V(C)$ has at most two neighbours in $C$, implying that all leaf blocks of $\Omega-V(C)$ are isomorphic to $K_3$. 

Let $B$ be a leaf block of $\Omega-V(C)$, and $p_1,p_2$ be two vertices in $B$ not contained in other blocks of $\Omega-V(C)$. If some vertex in $C$ has two neighbours in $B$, then by (1.2) and the choice of $C$ we have $|V(C)|=3$. Hence, 
\begin{itemize}
\item[(1.3)] each vertex in $C$ has at most one neighbour in $B$ when $|V(C)|\geq4$. 
\end{itemize}
Let $y_i$ and $z_i$ be two distinct neighbours of $p_i$ in $C$. By (1.3) we may assume that $\{y_1,z_1\}\cap\{y_2,z_2\}=\emptyset$. Without loss of generality we may further assume that there is a $(y_1,z_2)$-path $P$ on $C$ containing neither $z_1$ nor $y_2$. Since $P\cup\{p_1y_1, p_2z_2, p_1p_2\}$ is an unbalanced cycle of length less than or equal to $|V(C)|$ by (1.1), by the choice of $C$ we have $V(C)=V(P)\cup\{y_2,z_1\}$. Moreover, since $p_1p_2y_2z_1p_1$ is an unbalanced cycle by (1.1), we have $V(C)=\{y_1,z_1, y_2,z_2\}$.

We rename the vertices of $C$ so that $C=v_1v_2v_3v_4v_1$. Since $|V(C)|=4$, all triangles in $\Omega$ are balanced. By (1.3), there is a partition $(\{v_i,v_j\}, \{v_s,v_t\})$ of $V(C)$ such that $\{v_i,v_j\}=N_{\Omega}(p_1)\cap V(C), \{v_s,v_t\}=N_{\Omega}(p_2)\cap V(C)$. We say that the partition of $V(C)$ is {\sl determined by} $B$. 
When the partitions of $V(C)$ determined by two leaf blocks $B$ and $B'$ are the same, $\Omega$ has two disjoint 4-cycles (which are unbalanced by our assumptions). So every two leaf blocks determine different partitions of $V(C)$. 

Let $w$ be the vertex in $V(B)-\{p_1,p_2\}$; by (1.3) $w$ has no neighbours in $C$, so the component of $\Omega-V(C)$ containing $B$ has another leaf block $B'$. Let $q_1,q_2$ be the vertices in $B'$ not contained in other blocks of $\Omega-V(C)$.  

When $\{v_1,v_3\}=N_{\Omega}(p_1)\cap V(C), \{v_2,v_4\}=N_{\Omega}(p_2)\cap V(C)$, $\{v_1,v_2\}=N_{\Omega}(q_1)\cap V(C)$, and $\{v_3,v_4\}=N_{\Omega}(q_2)\cap V(C)$, we have that $v_2v_3v_4p_2v_2$ and $v_1p_1Pq_1v_1$ are disjoint unbalanced cycles, where $P$ is a $(p_1,q_1)$-path in $\Omega-V(C)$ avoiding $p_2$. Hence, by symmetry we may assume that neither $B$ nor $B'$ determines the partition $(\{v_1,v_3\}, \{v_2,v_4\})$ of $V(C)$. Then $\Omega-V(C)$ has exactly two leaf blocks for otherwise two of them determine the same partition of $V(C)$. When $\Omega-V(C)\neq B\cup B'$, by (1.2) and the fact that each vertex in $\Omega$ has degree at least four, $\Omega$ has two disjoint unbalanced cycles. So $\Omega-V(C)=B\cup B'$. Since $w$ has no neighbours in $C$, $\Omega$ is isomorphic to the graph pictured in the middle of Figure~\ref{fig:sporadic}, where all triangles are balanced. From the picture one can see that this is a tricoloured graph.
\end{cproof}

By the last claim we may assume that $V(C)=\{v_1,v_2,v_3\}$. Since $\Omega$ is 4-connected, $\Omega-V(C)$ is connected and 
\begin{itemize}
\item[(1.4)] for each leaf block $B$ of $\Omega-V(C)$, we have $V(C)\subseteq N_{\Omega}(B-w)$, where $w$ is the attachment of $B$ in  $\Omega-V(C)$.
\end{itemize} 
Since each vertex in $V(B)-\{w\}$ is adjacent to at least two vertices in $C$, we have 
\begin{itemize}
\item[(1.5)] when $B$ is isomorphic to $K_3$, there is a vertex $v_i$ in $C$ such that $\Omega[(V(B)-\{w\})\cup \{v_i\}]$ is an unbalanced triangle. 
\end{itemize} 

Assume that $\Omega-V(C)$ has at least three leaf blocks $B_1,B_2,B_3$. 
Let $w$ be the attachment of $B_1$ in  $\Omega-V(C)$. 
By (1.2), (1.4) and (1.5), there is an unbalanced triangle $C'$ in $C \cup (B_1-w)$ avoiding a vertex $v_i$ of $C$. 
Since $B_1$ is a leaf block, there is a path $P$ in $\Omega-V(C)$ between $B_2$ and $B_3$ avoiding $B_1-w$. 
By (1.4) there is a cycle using $v_i$ (and no other vertex from $C$) together with $P$ and vertices from $B_2$ and $B_3$. Such cycle has length at least four, so it is unbalanced (by (1.1)) and it is disjoint from $C'$, a contradiction.
Hence, $\Omega-V(C)$ has exactly two leaf blocks $B_1$ and $B_2$. 


Assume that $B_1$ and $B_2$ are isomorphic to $K_2$. Let $p_1$ and $p_2$ be the two degree-1 vertices in $\Omega-V(C)$. Since $|V(\Omega)|\geq7$, the graph $\Omega-V(C)$ has an internal block. When there is a vertex $v_i$ in $C$ such that $\Omega[(V(\Omega)-V(C)-\{p_1,p_2\})\cup \{v_i\}]$ is unbalanced, by (1.4) the graph $\Omega$ has two disjoint unbalanced cycles (one being a 4-cycle with $p_1,p_2$ and the vertices in $V(C)-\{v_i\}$). We show that this implies that $\Omega-V(C)$ is a path with exactly three edges. Suppose this is not the case. First note that $\Omega-V(C)$ has at least three blocks, because $|V(\Omega)|\geq 7$. Then there is a vertex $q$ of degree two in $\Omega-(V(C)\cup V(B_1)\cup V(B_2))$. Since $\Omega$ is 4-connected, $q$ has two neighbours in $C$. 
Let $w_1$ and $w_2$ be the neighbours of $p_1$ and $p_2$ in $\Omega-V(C)$ respectively. 
Then $w_1$ and $w_2$ each have degree at most three in $\Omega - V(C)$, so they each have a neighbour in $C$. Let $t_1$ and $t_2$ be the neighbours of $w_1$ and $w_2$ respectively.
If $t_1=t_2$, then $\Omega[(V(\Omega)-V(C)-\{p_1,p_2\})\cup \{t_1\}]$ is unbalanced, a contradiction. 
Thus $t_1\neq t_2$ and we may assume that $t_1$ is a neighbour of $q$. Let $P$ be a $(w_1,q)$-path in $\Omega-V(C)$. Then the cycle $P\cup \{w_1t_1,t_1q\}$ is balanced (else we again have that $\Omega[(V(\Omega)-V(C)-\{p_1,p_2\})\cup \{t_1\}]$ is unbalanced), so this cycle is in fact a balanced triangle. 
Therefore, by (1.2) the triangle $t_1p_1w_1t_1$ is unbalanced. Let $P'$ be a $(q,p_2)$-path in $\Omega-V(C)$. Then the theta induced by $P'$ and $V(C)-\{t_1\}$ contains a cycle of length at least four (hence unbalanced by (1.1)) which is disjoint from the unbalanced triangle $t_1p_1w_1t_1$, a contradiction.
So $\Omega-V(C)$ is a path with exactly three edges. 
Let $\Omega-V(C)=p_1p_3p_4p_2$. Since $|\delta_{\Omega}(p_3)|, |\delta_{\Omega}(p_4)|\geq4$ and there is no $v_i$ in $C$ such that $v_ip_3p_4v_i$ is unbalanced, by (1.2) the graph $\Omega-\{p_1,p_2\}$ contains a spanning 4-wheel whose center $v_1$ is in $V(C)$ and $v_1p_3p_4v_1$ is balanced. Then $v_1p_1p_3v_1$ is unbalanced by (1.2), a contradiction as $v_1p_1p_3v_1$ is disjoint from a 4-cycle contained in $\Omega-\{v_1,p_1,p_3\}$. Hence, by symmetry we may assume that $B_1$ is isomorphic to $K_3$.

When $B_2$ is isomorphic to $K_3$ or $B_1$ and $B_2$ have no common vertex, by (1.4) and (1.5) the graph $\Omega$ has two disjoint unbalanced cycles. So $B_2$ is isomorphic to $K_2$ and shares a common vertex $u_1$ with $B_1$, implying that $\Omega-V(C)=B_1\cup B_2$. When $u_1$ has two neighbours in $C$, by (1.4) and (1.5) the graph $\Omega$ has  two disjoint unbalanced cycles. So by symmetry we may assume that $N_{\Omega}(u_1)\cap V(C)=\{v_1\}$. 
Set $\{u_2,u_3\}=V(B_1)-\{u_1\}$ and $\{u\}=V(B_2)-\{u_1\}$. When $v_2$ is adjacent with $u_2$ and $u_3$, by (1.4) we have that $v_2u_2u_3v_2$ and $v_1u_1uv_3v_1$ are disjoint unbalanced cycles. Hence, by symmetry at most one vertex of $\{u_2,u_3\}$ is adjacent with $v_2$ or $v_3$. Since $\Omega$ is 4-connected, by symmetry we may assume that $\{v_1,v_i\}=N_{\Omega}(u_i)\cap V(C)$ for each $1\leq i\leq 2$. Therefore $\Omega$ is isomorphic to the graph on the right of Figure~\ref{fig:sporadic}. 
Since each triangle disjoint from a 4-cycle is balanced, $\Omega$ is a tricoloured graph. 
\end{proof}

\begin{figure}[h!]
\begin{center}
\includegraphics[page=15,height=4.2cm]{tangled-figures.pdf}
\includegraphics[page=13,height=4.2cm]{tangled-figures.pdf}
\includegraphics[page=14,height=4.2cm]{tangled-figures.pdf}
\caption{Graphs appearing in the proof of \lref{small-balanced-sub}.}
\label{fig:sporadic}
\end{center}
\end{figure}

\begin{lemma}\label{lem:balanced-sub}
Suppose that $\Omega$ is a simple 4-connected tangled biased graph
with at least six vertices. Then either $\Omega$ is a criss-cross,
or a projective planar biased graph with a special triple, or a tricoloured graph, or there
exists a subgraph $\Omega'$ of $\Omega$ that is 2-connected,
spanning (i.e. $V(\Omega')=V(\Omega)$) and balanced.
\end{lemma}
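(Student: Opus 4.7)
My plan is to choose $\Omega'$ to be a maximal $2$-connected balanced subgraph of $\Omega$, maximizing first $|V(\Omega')|$ and then $|E(\Omega')|$, and show that either $V(\Omega') = V(\Omega)$ or $\Omega$ is one of the two exceptional structures. A starting $2$-connected balanced subgraph exists: $\Omega$ is tangled (hence not balanced), so unbalanced cycles exist; but not every cycle of $\Omega$ can be unbalanced, because a simple $4$-connected graph on at least six vertices always contains two disjoint cycles, and two disjoint unbalanced cycles would contradict the tangled hypothesis.

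Next, I would assume for contradiction that $V(\Omega')\subsetneq V(\Omega)$ and pick $v\in V(\Omega)\setminus V(\Omega')$. Since $\Omega$ is $4$-connected, the fan version of Menger's theorem yields four internally disjoint $(v,V(\Omega'))$-paths $P_1,P_2,P_3,P_4$ meeting $\Omega'$ only at four distinct vertices $u_1,u_2,u_3,u_4$. For any pair $i\ne j$ the subgraph $\Omega'\cup P_i\cup P_j$ is $2$-connected and properly extends $\Omega'$; its new cycles are exactly the cycles $P_i\cup P_j\cup Q$, where $Q$ ranges over $(u_i,u_j)$-paths of $\Omega'$. Since $\Omega'$ is balanced, any two such cycles differ by rerouting along balanced cycles of $\Omega'$, so by the theta property they all share a common bias. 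Maximality of $\Omega'$ forces this common bias to be unbalanced, for every pair $\{i,j\}\subseteq\{1,2,3,4\}$.

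The tangled hypothesis then imposes strong constraints on $\Omega'$. If $\Omega'$ admitted a $(u_1-u_2,u_3-u_4)$-linkage $(Q,Q')$, concatenating with the appropriate $P_r$'s would yield two disjoint unbalanced cycles $P_1\cup P_2\cup Q$ and $P_3\cup P_4\cup Q'$, contradicting tangledness; the same argument rules out the other two partitions of $\{u_1,\ldots,u_4\}$ into pairs. By \tref{noLinkage} (applied to each pairing), up to reordering the $u_i$'s, $(\Omega',(u_1,u_2,u_3,u_4))$ is $3$-planar, so $u_1,u_2,u_3,u_4$ appear in this cyclic order on a face boundary of $\proj(\Omega',\zA)$ for some minimal $\zA$ given by \lref{link0}.

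With this $3$-planar skeleton in hand, I would finish by a case analysis on the attachments of $v$ and of any other vertex lying outside $\Omega'$. The tangled hypothesis and the maximality of $\Omega'$ restrict these attachments to lie inside the face of $\proj(\Omega',\zA)$ containing $u_1,\ldots,u_4$. If $v$ has exactly these four attachments and no other vertex lies outside $\Omega'$, the pattern of unbalanced cycles through $v$, together with the theta property, matches the criss-cross definition of \sref{structures} with $\Omega'$ playing the role of $H$, $v$ playing $w$, and the diagonal triangles arising from the pairings forced by theta. If instead there are extra attachments from $v$, or extra vertices outside $\Omega'$, the linkage tools \lref{link6} and \lref{link3+} applied to these new attachments force them to sit in a small subregion of the face, yielding precisely the projective planar biased graph with a special triple described in \sref{ppst}. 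The main obstacle will be this final case analysis: every deviation from the criss-cross must be shown to either recreate two disjoint unbalanced cycles, produce a larger balanced $2$-connected subgraph (contradicting the maximality of $\Omega'$), or coincide exactly with the special-triple configuration, with no other patterns escaping.
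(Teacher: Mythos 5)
There is a genuine gap at the heart of your linkage step. Your four fan paths $P_1,P_2,P_3,P_4$ all emanate from the single vertex $v$, so the two cycles $P_1\cup P_2\cup Q$ and $P_3\cup P_4\cup Q'$ both contain $v$ and are \emph{not} disjoint. Consequently a $(u_1-u_2,u_3-u_4)$-linkage in $\Omega'$ does not contradict tangledness, and the $3$-planarity of $(\Omega',(u_1,u_2,u_3,u_4))$ does not follow from the argument you give. This cannot be repaired by rerouting the fan: when exactly one vertex lies outside $\Omega'$, every $\Omega'$-path with an internal vertex passes through $v$, so no two such paths are disjoint. The missing idea is to use that $\Omega$ is tangled, hence $v$ is not a blocking vertex: there must be an unbalanced cycle avoiding $v$, which (by maximality of $\Omega'$) forces the existence of a chord $f=xy\in E(\Omega)-E(\Omega')$ not incident with $v$ whose fundamental cycles are all unbalanced. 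The correct disjoint pair is then one cycle through $v$ (two fan paths plus a path in $\Omega'$) against one cycle through $f$ (namely $f$ plus an $(x,y)$-path in $\Omega'$); it is the nonexistence of linkages of the form $(u_i-u_j,\,x-y)$ that yields the $3$-planarity and, together with \lref{link3+}, pins the chords down to the set $\{u_1,u_2,u_3,u_4\}$.

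A secondary but real misdirection: you assign the special-triple outcome to the situation where ``there are extra attachments from $v$, or extra vertices outside $\Omega'$.'' In fact, when two distinct vertices lie outside $\Omega'$ one obtains two genuinely disjoint $\Omega'$-paths, and the analysis there ends in a flat contradiction with the maximal choice of $\Omega'$ --- no exceptional structure arises. Both exceptional outcomes (criss-cross and projective planar with a special triple) occur only in the case of exactly one missing vertex, and the dichotomy between them is governed by how many chords avoid $v$ (one chord gives the special triple; two or more, necessarily crossing, give the criss-cross). Your existence argument for a balanced cycle and your choice of $\Omega'$ by maximizing $|V|$ then $|E|$ are fine and match the intended approach, but the proof as proposed does not go through without the blocking-vertex input described above.
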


\begin{proof}
By \tref{Lovasz}, if $\Omega$ has no balanced cycle then it either has at most 5 vertices or it is not 4-connected.
Thus $\Omega$ has at least one balanced cycle and we may choose a subgraph $\Omega'$ with the following properties:
\begin{itemize}
    \item[(O1)] $\Omega'$ is 2-connected;
    \item[(O2)] $\Omega'$ is balanced;
    \item[(O3)] subject to (O1) and (O2), $|V(\Omega')|$ is maximised;
    \item[(O4)] subject to (O1), (O2) and (O3), $|E(\Omega')|$ is maximised.
\end{itemize}

By \lref{small-balanced-sub}, we may assume that $\Omega'$ has at least four vertices.
If $\Omega'$ is spanning, then we are done. Thus we may assume that this is not the case.
Properties (O3) and (O4) imply immediately that
\begin{itemize}
    \item[$\diamondsuit$] if $P$ is an $\Omega'$-path (with endpoints $u,v$), then every cycle formed by $P$ and a $(u,v)$-path  in $\Omega'$ is unbalanced.
\end{itemize}

When $\Omega-V(\Omega')$ has two components $G_1$ and $G_2$, let $V_i$ be the set of vertices in $\Omega'$ adjacent with $G_i$, for each $1\leq i\leq 2$. Since $\Omega$ is 4-connected, $|V_1|,|V_2|\geq4$. By Lemma \ref{3-ele} and Property $\diamondsuit$ there are two disjoint unbalanced cycles in $\Omega$, a contradiction. So $\Omega-V(\Omega')$ is connected. 

Next suppose that $\Omega-V(\Omega')$ has at least two blocks.
Let $B_1$ be a leaf block, let $w_1$ the attachments of $B_1$ in $\Omega-V(\Omega')$ and let $H:=\Omega-V(\Omega')-(V(B_1)- \{w_1\})$. Let $V_1$ be the subset of vertices in $\Omega'$ adjacent with $B_1-w_1$ and let $V_2$ be the subset of vertices in $\Omega'$ adjacent with $H$. 
Since $\Omega$ is 4-connected, $|V_1|,|V_2|\geq3$ and $|V_1\cup V_2|\geq4$ (since $\Omega'$ has at least four vertices). Then by Lemma \ref{3-ele} and Property $\diamondsuit$ there are two disjoint unbalanced cycles in $\Omega$, a contradiction. Therefore $\Omega-V(\Omega')$ is 2-connected.

First we consider the case that $V(\Omega)-V(\Omega')$ has at least two vertices. When $V(\Omega)-V(\Omega')$ has exactly two vertices $w_1,w_2$, let $V_i$ be the neighbours of $w_i$ in $\Omega'$
 for each $1\leq i\leq 2$. Since $|V_1|,|V_2|\geq3$ and $|V_1\cup V_2|\geq4$, by Lemma \ref{3-ele} and Property $\diamondsuit$ there are disjoint unbalanced cycles in $\Omega$, a contradiction. Now assume that $V(\Omega)-V(\Omega')$ has exactly three vertices $w_1,w_2,w_3$. For $i=1,2,3$, let $V_i$ be the neighbours of $w_i$ in $\Omega'$. 
 Since $V(\Omega)-V(\Omega')$ is 2-connected, the graph $\Omega-V(\Omega')$ is a triangle (with possibly some edges doubled) and there are distinct vertices $u_1,u_2,u_3,u_4$ such that $u_1,u_3 \in V_1$ and $u_2,u_4\in V_2$. Since $\Omega$ has no disjoint unbalanced cycles, $\Omega'$ has no $(u_1-u_3,u_2-u_4)$-linkage. Theorem 4.1 and Lemma 4.5 imply that $(\Omega', (u_1,u_2,u_3,u_4))$ is 3-planar and $u_1,u_2,u_3,u_4$ appear in a cycle $C$ of $\Omega'$ in this order. Let $u$ be a vertex in $\Omega'$ adjacent with $w_3$. By symmetry we may further assume that $\Omega'$ has a path joining $u$ and $C[u_1,u_2)$ and disjoint from $C[u_2,u_3,u_4,u_1)$. Then, since $w_3$ is adjacent to $w_1$ in $\Omega-\Omega'$, $\Omega$ has two disjoint unbalanced cycles by Property $\diamondsuit$, a contradiction. So $\Omega-V(\Omega')$ has at least four vertices. 

Let $f_1,f_2,f_3,f_4$ be disjoint edges joining $\Omega'$ and $\Omega-V(\Omega')$. Set $f_i=u_iv_i$ with $u_i$ in $\Omega'$ and $v_i$ in $\Omega-V(\Omega')$ for each $1\leq i\leq 4$. If for each partition $(\{u_i,u_j\}, \{u_s,u_t\})$ of $\{u_1,u_2,u_3,u_4\}$ there is a $(u_i-u_j, u_s-u_t)$-linkage in $\Omega'$, then $\Omega$ obviously has two disjoint unbalanced cycles, a contradiction. So by symmetry we may assume that $\Omega'$ has no $(u_1-u_3, u_2-u_4)$-linkage. Theorem 4.1 and Lemma 4.5 imply that $(\Omega', (u_1,u_2,u_3,u_4))$ is 3-planar and $u_1,u_2,u_3,u_4$ appear in a cycle $C$ of $\Omega'$ in this order. Since $\Omega$ has no disjoint unbalanced cycles, $\Omega-V(\Omega')$ has no $(v_1-v_2,v_3-v_4)$-linkage. Using Theorem 4.1 and Lemma 4.5 again, $(\Omega-V(\Omega'), (v_1,v_3,v_2,v_4))$ is 3-planar and $v_1,v_3,v_2,v_4$ appear in a cycle $C'$ of $\Omega-V(\Omega')$ in this order. Then $C[u_4,u_1]\cup\{f_1,f_4\}\cup C'[v_4,v_1]$ and $C[u_2,u_3]\cup\{f_2,f_3\}\cup C'[v_2,v_3]$ are disjoint unbalanced cycles, a contradiction. 

Finally consider the case that $V(\Omega)-V(\Omega')$ has only one vertex $w$. We will show in this case that either $\Omega$ is a criss-cross graph or it is projective planar with a special triple.
Property $\diamondsuit$ implies immediately that
\begin{itemize}
    \item[(1.1)] every cycle formed by two edges incident with $w$ and a path in $\Omega'$ is unbalanced.
\end{itemize}

Since $\Omega$ is 4-connected, $w$ has at least four distinct neighbours in $\Omega'$. Let $u_1,u_2,u_3,u_4$ be such neighbours and let $e_i$ be a $(w,u_i)$-edge, for every $i \in [4]$.
Let $U$ be the set of edges in $E(\Omega)-E(\Omega')$ which are not incident with $w$.
Assumption (O4) implies that, for every $f \in U$, every $f$-cycle for $\Omega'$ is unbalanced.
Pick any $f=xy \in U$. Then $\Omega'$ does not contain a $(u_i-u_j,x-y)$-linkage, for any choice of distinct $i,j \in [4]$ with $u_i, u_j \notin \{x,y\}$. Therefore, \lref{link3+} implies that one of the following occurs:
\begin{itemize}
    \item[(a1)] $x,y \in \{u_1,u_2,u_3,u_4\}$, or
    \item[(a2)] $\{x,y\}$ is a 2-vertex-cut of $\Omega'$ separating $u_i$ from $u_j$, for all distinct $i,j \in [4]$ with $u_i, u_j \notin \{x,y\}$.
\end{itemize}

Next we show that case (a1) holds for every choice of $f \in U$. Suppose that case (a2) occurs. If we are not also in case (a1), then we may assume that $u_1,u_2,u_3 \notin \{x,y\}$.
Let $B_1,\ldots,B_k$ be $\{x,y\}$-bridges containing $\{u_1,u_2,u_3,u_4\}-\{x,y\}$, labelled so that $u_i \in B_i$ for $i\in[k]$. Since $\{w,x,y\}$ is not a 3-vertex-cut of $\Omega$, there exists an edge $f' \in U$ with endpoint $x',y'$, such that $x' \in B_1-\{x,y\}$ and $y' \in B-\{x,y\}$, for some $\{x,y\}$-bridge $B \neq B_1$. 
Then $\Omega'$ contains an $(x'-y',u_3-u_4)$-linkage, a contradiction. It follows that case (a1) occurs for all $f \in U$, i.e.
\begin{itemize}
    \item[(1.2)] every edge in $U$ has both endpoints in $\{u_1,u_2,u_3,u_4\}$.
\end{itemize}

Since $w$ is not a blocking vertex, there exists an edge $f_1 \in U$ with endpoints, say, $u_1$ and $u_3$.
Then $\Omega'$ does not contain a $(u_2-u_4,u_1-u_3)$-linkage, so $(\Omega',(u_1,u_2,u_3,u_4))$ is 3-planar.
By \lref{link4}, there exists a cycle $C$ of $\Omega'$ which contains $u_1,u_2,u_3,u_4$ in this circular order.

Next we show that
\begin{itemize}
    \item[(1.3)] the only neighbours of $w$ are $u_1,u_2,u_3,u_4$.
\end{itemize}
Suppose to the contrary that $w$ has a fifth neighbour $u_5$. If $u_5$ attaches to $C$ at a vertex $z \neq u_1,u_3$, then there is an unbalanced cycle (using $w,u_5$ and one of $u_2$ or $u_4$) which is disjoint from one of the two unbalanced cycles in $C \cup \{f_1\}$. It follows that $\{u_1,u_3\}$ is a 2-vertex-cut of $\Omega'$ separating $u_5$ from $u_2$ and $u_4$. Then (1.2) implies that $\{w,u_1,u_3\}$ is a 3-vertex-cut of $\Omega$, a contradiction.

Fact (1.3) implies in particular that
\begin{itemize}
    \item[(1.4)] $(\Omega',(u_1,u_2,u_3,u_4))$ is planar.
\end{itemize}
Indeed, if this is not the case, then, for some $k \in [3]$, $\Omega'$ contains a $k$-separation with none of $u_1,u_2,u_3,u_4$ in the interior. Then this is also a $k$-separation in $\Omega$, a contradiction.
Next we show that
\begin{itemize}
    \item[(1.5)] there is no edge in $U$ parallel to $f_1$.
\end{itemize}
Suppose to the contrary that there is an edge $f_2 \in U$ parallel to $f_1$. Since $\Omega$ is simple, the cycle $\{f_1,f_2\}$ is unbalanced.
Therefore $\{u_1,u_3\}$ intersects every $(u_2-u_4)$-path in $\Omega'$, i.e. $\{u_1,u_3\}$ is a 2-vertex-cut of $\Omega'$ separating $u_2$ from $u_4$. Properties (1.2) and (1.3), and the fact that $\Omega$ is 4-connected, imply that there are exactly two $\{u_1,u_3\}$-bridges in $\Omega'$, one containing $u_2$ and the other containing $u_4$. Since $\Omega$ has more than five vertices, one of these bridges (say the one containing $u_2$) has at least one vertex other than $u_1,u_2$ and $u_3$. Therefore $\{u_1,u_2,u_3\}$ is a 3-vertex-cut of $\Omega$, a contradiction.
Next we show that
\begin{itemize}
    \item[(1.6)] if $f_1$ is the only edge in $U$ then $\Omega$ is projective planar with a special triple.
\end{itemize}
Suppose that $U=\{f_1\}$. First we note that there are no edges in $\Omega$ that are parallel to either $e_2$ or $e_4$. In fact, if  there is an edge $e_2'$ parallel to $e_2$, then $\{e_2,e_2'\}$ is an unbalanced cycle disjoint from the unbalanced cycle $C[u_3,u_4,u_1]\cup \{f_1\}$.
Then $\Omega$ is projective planar with a special triple, where (following the terminology in the definition of projective planar biased graph with a special triple given in \sref{ppst}) we have that: $H=\Omega' \cup \{e_2\}$, $x=w$, $y_1=u_1$, $y_2=u_3$, $X=\{u_4\}$ and $f=f_1$.

We conclude by showing that
\begin{itemize}
    \item[(1.7)] if $U$ contains at least two edges, then $\Omega$ is a criss-cross.
\end{itemize}

Suppose that $U$ contains an edge $f_2$ other than $f_1$. By (1.2)
and (1.5), we may assume by symmetry that the endpoints of $f_2$ are
either $u_2$ and $u_4$ or $u_1$ and $u_2$. In the latter case
however, $C \cup \{f_2,e_3,e_4\}$ contains two disjoint unbalanced
cycles. Thus $f_2=u_2u_4$. 
In the proof of (1.6) we showed that the assumption that $U$ contains an edge $f_1=u_1u_3$ implies that there are no edges parallel to $e_2$ or $e_4$ in $\Omega$. The same argument applied to the case where $U$ contains edge $f_2=u_2u_4$ shows that there are also no
edges parallel to $e_1$ or $e_3$. 
It remains to show that the triangles
$\{e_1,e_3,f_1\}$ and $\{e_2,e_4,f_2\}$ are balanced. Suppose that
the triangle $\{e_1,e_3,f_1\}$ is unbalanced. Because of edge $f_2$,
this implies that $\{u_1,u_3\}$ intersect every $(u_2,u_4)$-path in
$\Omega'$. Thus $\{u_1,u_3\}$ is a 2-vertex-cut of $\Omega'$. Since
$|V(\Omega')|=|V(\Omega)|-1\geq 5$, this implies that at least one
of $\{u_1,u_2,u_3\}$ and $\{u_1,u_3,u_4\}$ is a 3-vertex-cut of
$\Omega$, a contradiction. We conclude that $\Omega'$ is indeed a
criss-cross. 
\end{proof}

\begin{lemma}\label{lem:planar}
Suppose that $\Omega$ is a simple 4-connected tangled signed graph
with at least six vertices. Suppose moreover that $\Omega$ is not a
criss-cross, a tricoloured graph, a projective planar biased graph with a special triple,
or a fat triangle. Then there is a maximal $2$-connected spanning
balanced subgraph $\Omega'$ of $\Omega$ such that
$(\Omega',(x_1,\ldots,x_m,y_1,\ldots,y_m))$ is planar (where
consecutive vertices may be repeated), where
$E(\Omega)-E(\Omega')=\{x_iy_i\ |\ i \in [m]\}$.
\end{lemma}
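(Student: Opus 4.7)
The plan is to produce a maximal $2$-connected spanning balanced subgraph $\Omega'$ via \lref{balanced-sub}, apply \lref{link5} to the edges outside $\Omega'$ to obtain a $3$-planar structure, and then use $4$-connectedness together with the exclusion of the fat triangle to upgrade $3$-planarity to planarity.

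First, \lref{balanced-sub} (applicable since $\Omega$ is neither a criss-cross nor projective planar with a special triple) produces a $2$-connected spanning balanced subgraph, which we extend to a maximal such $\Omega'$. Write $E(\Omega) - E(\Omega') = \{f_i = x_iy_i : i \in [m]\}$. Maximality of $\Omega'$ implies that every $f_i$-cycle for $\Omega'$ is unbalanced; consequently, for any $i \neq j$ with $\{x_i,y_i\} \cap \{x_j,y_j\} = \emptyset$, $\Omega'$ contains no $(x_i-y_i, x_j-y_j)$-linkage, since such a linkage would combine with $f_i,f_j$ into two vertex-disjoint unbalanced cycles. Choose a maximal $I \subseteq [m]$ such that the $2|I|$ endpoints $\{x_i,y_i : i \in I\}$ are pairwise distinct; maximality of $I$ forces every extra edge to have both endpoints in this set. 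By \lref{link5}, after a suitable relabelling and swapping, $(\Omega', \mathcal{A}, (x_{i_1},\ldots,x_{i_k},y_{i_1},\ldots,y_{i_k}))$ is $3$-planar with $\mathcal{A}$ minimal.

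The main step is to show $\mathcal{A} = \emptyset$. Suppose to the contrary that some $A \in \mathcal{A}$ is nonempty. Since the endpoints of all extra edges lie in $\{x_i, y_i : i \in I\}$, no vertex of $A$ is incident to any extra edge, so $N_\Omega(A) = N_{\Omega'}(A)$ has size at most $3$. By $4$-connectedness, $N(A)$ cannot be a vertex-cut of $\Omega$, forcing $V(\Omega) = A \cup N(A)$. Thus all extra edges lie on a set of at most three vertices, namely $N(A)$. If $|N(A)| \leq 2$, then all extra edges share a common vertex which is therefore a blocking vertex, contradicting tangledness. If $|N(A)| = 3$, say $N(A) = \{v_1,v_2,v_3\}$, partition the extra edges by endpoints into $F_{12}, F_{23}, F_{13}$: tangledness requires each $F_{ij}$ to be nonempty (otherwise some $v_k$ is a blocking vertex), in which case $\Omega$ fits the definition of a fat triangle with balanced core $\Omega'$, contradicting the hypothesis. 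Hence $\mathcal{A} = \emptyset$, so $(\Omega', (x_{i_1},\ldots,x_{i_k},y_{i_1},\ldots,y_{i_k}))$ is planar. Finally, the remaining extra edges $f_j$ with $j \notin I$ have both endpoints among those already placed on the distinguished face, and can be accommodated by repeating the appropriate vertex in the circular sequence, as the statement permits.

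The main obstacle is the case analysis ruling out a nonempty $A \in \mathcal{A}$. The argument relies on a delicate interplay: $\Omega'$ being maximal balanced gives unbalancedness of every $f_i$-cycle, $I$ being maximal ensures $A$ avoids all extra-edge endpoints so that $N_\Omega(A) = N_{\Omega'}(A)$, $4$-connectedness squeezes $V(\Omega) - A$ to have size at most three, and the tangled/no-fat-triangle hypotheses jointly eliminate the remaining possibilities.
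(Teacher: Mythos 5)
Your opening moves (extracting a maximal $2$-connected spanning balanced subgraph via \lref{balanced-sub}, observing that maximality makes every $f_i$-cycle unbalanced and hence forbids $(x_i-y_i,x_j-y_j)$-linkages for independent pairs, and invoking \lref{link5}) coincide with the paper's. The gap is the assertion that maximality of $I$ ``forces every extra edge to have both endpoints in'' $\{x_i,y_i : i\in I\}$. Maximality of a family of pairwise vertex-disjoint edges only forces every further edge of $U$ to have \emph{at least one} endpoint in that set: an edge $f_j$ sharing a single endpoint with some $f_i$, $i\in I$, already cannot be added to $I$, and its other endpoint is unconstrained --- it may lie inside some $A\in\mathcal{A}$ or anywhere off the distinguished face. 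Everything downstream of this assertion fails: you cannot conclude that $N_\Omega(A)=N_{\Omega'}(A)$, the argument that $\mathcal{A}=\emptyset$ collapses, and your final sentence (that the edges $f_j$ with $j\notin I$ ``have both endpoints among those already placed on the distinguished face'') is precisely what still needs to be proved.

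That missing piece is the heart of the paper's proof: it defines, for each $i$, the full neighbour sets $X_i=\{x : xy_i\in U\}$ and $Y_i=\{y : x_iy\in U\}$ and proves (\clref{planar}) that $(\Omega',(X_1,\ldots,X_n,Y_1,\ldots,Y_n))$ is $3$-planar, by showing that any stray endpoint $y\notin V(C)$ sits behind a $2$-vertex-cut on the correct arc of the face cycle and then re-packaging the corresponding bridge into a new member of $\mathcal{A}$; $4$-connectedness is used there to manufacture forbidden $3$-vertex-cuts, not merely to bound $|N(A)|$. Two further remarks. First, \lref{link5} needs $n\ge 2$, so one must first rule out that $U$ has no two independent edges --- this is where the fat-triangle and blocking-vertex exclusions actually enter (the paper's step (A3)) --- whereas in your argument the fat-triangle case analysis inside ``$\mathcal{A}=\emptyset$'' is essentially vacuous, since for $|I|\ge 2$ the set $N(A)$ would already have to contain the four distinct vertices $x_{i_1},y_{i_1},x_{i_2},y_{i_2}$. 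Second, even granting that all endpoints of $U$ lie on the distinguished face, an edge of $U$ outside your independent family still has to be shown to be listable as some $x_jy_j$ compatible with the single circular order $(x_1,\ldots,x_m,y_1,\ldots,y_m)$; this is exactly what the ordering of the sets $X_i,Y_i$ in \clref{planar} delivers.
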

\begin{proof}
Let $\Omega=(G,\bal)$ be a simple 4-connected tangled biased graph.
By \lref{balanced-sub}, we have that

\begin{enumerate}
    \item[(A1)] $\Omega$ contains a 2-connected spanning balanced subgraph $\Omega'$.
\end{enumerate}

For the remainder of the proof  we choose $\Omega'$ as in (A1) to be edge-maximal and we set $U:=E(\Omega)-E(\Omega')$.
Thus

\begin{enumerate}
    \item[(A2)] For every $e \in U$, every $e$-cycle for $\Omega'$ is unbalanced.
\end{enumerate}

\noindent
First we show that

\begin{enumerate}
    \item[(A3)] $U$ contains at least two independent edges.
\end{enumerate}

If (A3) does not hold, then either $G[U]$ is a star or there exist vertices $x_1,x_2,x_3$ in $G$ such that every edge in $U$ has both endpoints in $\{x_1,x_2,x_3\}$. In the first case $\Omega$ has a blocking vertex, in the second it is a fat triangle.

For the remainder of the proof we let $U'=\{f_i=x_iy_i\ |\ i\in[n]\}$ be a maximum-sized subset of $U$ of pairwise independent edges. The next result follows immediately from \lref{link5}.

\begin{claim}\label{cl:ordered}
Up to a reordering of $[n]$, $(\Omega',(x_1,\ldots,x_n,y_1,\ldots,y_n))$ is 3-planar.
\end{claim}
For every $i \in [n]$, define sets:
\[\begin{aligned}
Y_i&=\{y \in V(\Omega)\ |\ x_iy \in U\},\textrm{and}\\
X_i&=\{x \in V(\Omega)\ |\ xy_i \in U\}.
\end{aligned}\]
Note that $x_i \in X_i$ and $y_i \in Y_i$ for every $i \in [n]$.

\begin{claim}\label{cl:planar}
Up to a reordering of $[n]$, $(\Omega',(X_1,\ldots,X_n,Y_1,\ldots,Y_n))$ is 3-planar.
\end{claim}

\begin{cproof}
By \clref{ordered}, $(\Omega',\zA,(x_1,\ldots,x_n,y_1,\ldots,y_n))$ is 3-planar for some $\zA$.
Let $H=\proj(\Omega',\zA)$.
For any vertex $w$, let $w^*=w$ if $w \in V(H)$ and $w^*$ be an arbitrary vertex in $N_G(A)$ if $w \in A$ for some $A \in \zA$.
Let $C$ be the face boundary of $H$ containing $x_1,\ldots,x_n,y_1,\ldots,y_n$.
For every $i \in [n]$, let $X_i'=X_i \cap V(C)$ and $Y_i'=Y_i \cap V(C)$.
Among all possible choices for $\zA$, pick one such that
$|X_1' \cup \cdots \cup X_n' \cup Y_1' \cup \cdots \cup Y_n'|$ is maximized.

Since $U'$ is maximal, every edge in $U$ has at least one end in $\{x_1,\ldots,x_n,y_1,\ldots,y_n\}$.
We easily see from  \lref{link2} that
\begin{itemize}
    \item[(A4)] For every $y \in Y_i$, $y^*$ only attaches to vertices in $V(C[y_{i-1},y_i,y_{i+1}])$ (where we take $y_0=x_n$ and $y_{n+1}=x_1$). A symmetric statement holds for vertices in  $X_i$.
\end{itemize}
Therefore, the claim is easily seen to hold when $X_i, Y_i \subseteq V(C)$ for every $i \in [n]$.
Suppose that $x_1y \in U$ for some $y \notin V(C)$.

First assume that $y^* \notin V(C)$. Since $y^*$ only attaches to vertices in $C[x_n,y_1,y_2]$ in $C$, by planarity there exists a 2-vertex-cut $\{a,b\}\subseteq V(C[x_n,y_1,y_2])$ of $H$ such that $y^*$ and $C[y_3,\ldots,y_n,x_1,\ldots,x_{n-1}]$ are in different $\{a,b\}$-bridges. Let $B$ be the $\{a,b\}$-bridge containing $y^*$. Since $\{x_1,a,b\}$ is not a 3-vertex-cut of $\Omega$, there exists some vertex $z^* \in B-\{a,b\}$ such that $z \in X_2\cup \cdots \cup X_n\cup Y_1 \cup \cdots \cup Y_n$. Then (A4) implies that either $z \in X_n$ (and $a,b \in V(C[x_n,y_1])$) or $z \in Y_2$ (and $a,b \in V(C[y_1,y_2])$). By symmetry we may assume the latter. If, for some $i$, a vertex $w \in Y_i$ is in $C(a,b)$, then $\Omega'$ contains an $(x_i-w,y-x_1)$-linkage (or an $(x_i-w,z-x_2)$-linkage when $i=1$), a contradiction. By a similar argument we conclude that $C(a,b)$ does not contain any vertex in $X_1 \cup \cdots \cup X_n \cup Y_1 \cup \cdots \cup Y_n$. Let $B'$ be the $\{a,b\}$-bridge in $\Omega'$ corresponding to $B$. Define a new set $\zA'$ from $\zA$ by
$$\zA'=(\zA-\{A\in\zA\ |\ N_{\Omega'}(A)\subseteq B'\})\cup\{B'-\{a,b,y\}\}.$$
Then $(\Omega',\zA',(x_1,\ldots,x_n,y_1,y,y_2,\ldots,y_n))$ is 3-planar and $\zA'$ contradicts the choice of $\zA$.

Now suppose that $y^* \in V(C)$. Since $y \notin V(C)$, this implies that $y \in A$ for some $A \in \zA$.
If some vertex in $N_G(A)$ is not in $V(C)$, then we may replace $y^*$ with this vertex, and reduce to the previous case.
Now suppose that $N_G(A) \subset V(C)$. Since $y^*$ is chosen arbitrarily in $N_G(A)$, property (A4) implies that $N_G(A) \subset V(C[x_n,y_1,y_2])$. Let $B$ be the $N_G(A)$-bridge in $\Omega'$ containing $y$.
Since $\Omega'$ is 2-connected, $N_G(A)$ contains at least two vertices. Let $N_G(A)=\{a,b\}$ if $N_G(A)$ has size two and $N_G(A)=\{a,b,c\}$ otherwise, chosen so that $c$ is in the $(a,b)$-path of $C$ avoiding $x_1$. Since $a$ and $b$ are adjacent in $H$, the planarity of $H$ implies that $\{a,b\}$ is a 2-vertex-cut of $\Omega'$. Since $\{a,b,x_1\}$ is not a 3-vertex-cut of $\Omega$, there exists some vertex $z \in B-\{a,b\}$ such that $z \in X_2\cup \cdots \cup X_n\cup Y_1 \cup \cdots \cup Y_n$. Then (A4) implies that either $z \in X_n$ (and $a,b \in V(C[x_n,y_1])$) or $z \in Y_2$ (and $a,b \in V(C[y_1,y_2])$). By symmetry we may assume the latter. From here we may proceed in a similar manner to the previous case.
\end{cproof}

Since $\Omega$ is 4-connected and each edge in $U$ is incident with $X_1\cup \cdots \cup X_n\cup Y_1 \cup \cdots \cup Y_n$, \clref{planar} implies that $\Omega'$ is planar and all the vertices incident with edges in $U$ are on a same face boundary $C$. Since $\Omega'$ is 2-connected, $C$ is a cycle. Therefore, if $U=\{f_1,\ldots,f_m\}$, where $f_i=x_iy_i$ for every $i \in [m]$, then (by relabelling) we may assume that $(\Omega',(x_1,\ldots,x_m,y_1,\ldots,y_m))$ is planar (where some consecutive vertices may be repeated).
\end{proof}

\section{Proof of \tref{main-thm}}\label{sec:main_proof}

First we show that \tref{main-thm} holds when $\Omega$ has at most five vertices.
\begin{lemma}\label{lem:small}
Let $\Omega$ be a simple tangled biased graph with at most five vertices. Then \tref{main-thm} holds.
\end{lemma}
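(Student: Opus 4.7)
The plan is a case analysis on the number of vertices $n = |V(\Omega)|$. For $n \leq 2$, a direct argument shows that no simple tangled biased graph exists: with only one vertex, any unbalanced cycle is a loop at that vertex, making it a blocking vertex; with two vertices, a similar analysis (using that simplicity forbids balanced loops and balanced parallel pairs) shows that either $\Omega$ is balanced or has a blocking vertex, and any two vertex-disjoint loops would give vertex-disjoint unbalanced cycles. For $n=3$, the underlying simple graph is a subgraph of $K_3$, and I would argue that $\Omega$ must be a fat triangle (case (T1d)): for each vertex $v$, the absence of a blocking vertex produces an unbalanced cycle avoiding $v$, which must live on the remaining two vertices and hence must be either an unbalanced loop or an unbalanced $2$-cycle; two unbalanced loops at distinct vertices are vertex-disjoint, so we are forced to have unbalanced $2$-cycles between each pair of vertices, matching the fat triangle structure with $H$ a triangle on the three vertices.

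For $n \in \{4,5\}$, I would first invoke \lref{4conn}: if $\Omega$ is not $4$-connected, then $\Omega$ is a $t$-sum (case (T3)) or a generalized wheel (case (T1b)) and we are done. So assume $\Omega$ is $4$-connected, which forces the minimum degree of $\Omega$ to be at least $4$; since $n \leq 5$, the underlying simple graph of $\Omega$ is either $K_4$ (when $n=4$) or $K_5$ (when $n=5$), possibly decorated with parallel edges and unbalanced loops.

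For underlying $K_4$, any two cycles share a vertex, so the ``no two disjoint unbalanced cycles'' condition on $\Omega$ restricts only the behaviour of loops and parallel edges. A direct enumeration of theta-consistent bias assignments satisfying the tangled hypothesis (no blocking vertex, not balanced) matches each case to one of (T1a), (T1c), or (T1d); parallel edges form unbalanced $2$-cycles which must pairwise share a vertex, forcing them onto edges incident with a common vertex or vertex pair. For underlying $K_5$, split on whether $\Omega$ is a signed graph: if it is, apply \tref{Slilaty} directly to conclude (T1a), (T2), or (T3). Otherwise, the theta property combined with the no-two-disjoint-unbalanced-cycles condition forces the structure to match either a projective planar biased graph with a special vertex/pair/triple (cases (T1e)--(T1g)) or a tricoloured graph (T1h), again with the parallel-edge configurations severely restricted by the need for unbalanced $2$-cycles to share a common vertex.

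The main obstacle will be the $n=5$ case with underlying $K_5$. One must carefully verify that every theta-consistent, tangled bias on $K_5$ (with or without parallels) fits into one of the listed structures in (T1) or (T2), and in particular distinguish which biased $K_5$'s are signed graphs (where \tref{Slilaty} applies) from those requiring a genuinely biased structure; for the latter, the systematic use of the $2$-balanced condition (see \lref{2balanced}, foreshadowed in the definitions section) together with the theta property should pin the bias down to one of the special projective planar or tricoloured structures. The bookkeeping required to rule out spurious cases is the principal difficulty.
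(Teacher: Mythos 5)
Your overall skeleton (direct handling of $n\le 3$, then \lref{4conn} to reduce $n\in\{4,5\}$ to the highly connected case) is close to the paper's, and your $n\le 3$ argument is fine. The genuine problem is that the lists of outcomes you assert for the $4$-connected cases are not the ones that actually arise, so the ``direct enumeration'' as described would not close. For $n=4$ with underlying $K_4$, consider the case where every vertex lies in an unbalanced parallel class; since any two unbalanced $2$-cycles must share a vertex, all parallel classes are incident with a common vertex $v$, and (since $v$ is not blocking) the opposite triangle is unbalanced. This is exactly the paper's \emph{generalized wheel} outcome (T1b). It is provably not a fat triangle: in a fat triangle every unbalanced cycle contains two of the three distinguished vertices, but here the three unbalanced $2$-cycles have vertex sets $\{v,w_1\},\{v,w_2\},\{v,w_3\}$, which would force four distinct vertices into a three-element set. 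It is not a criss-cross either, since a criss-cross has at least five vertices (so (T1c) cannot occur at $n=4$ at all, which suggests the enumeration was not actually carried out). Whether such a biased graph is a projective planar signed graph is precisely the kind of claim that would require proof and is not obviously true, since it need not even be a signed graph. So (T1b) must appear in your $n=4$ conclusion and does not.

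The same issue recurs at $n=5$: when (T2) fails, the forced unbalanced $2$-cycles on two edges at a common vertex drive a short argument (this is what the paper does) whose outcomes are a fat triangle, a projective planar biased graph with a special triple, \emph{or a generalized wheel} --- the last two of which are missing from and the first of which contradicts your claimed ``(T1e)--(T1h)'' for the non-signed case. Your plan to split on whether $\Omega$ is a signed graph and invoke \tref{Slilaty} also adds work rather than saving it: deciding signability of a biased $K_5$ with parallel edges is itself nontrivial, and Slilaty's projective-planarity conclusion would still have to be reconciled with the statement of \tref{main-thm}. Note too that if the underlying graph is plain $K_5$ then (T2) already holds and no bias analysis is needed. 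The fix is to replace the open-ended enumeration with the paper's observation that failure of (T2) yields unbalanced $2$-cycles on two edges of $K_5$ sharing a vertex, whose disjointness constraints and theta-property consequences determine almost all biases and lead directly to the trichotomy fat triangle / special triple / generalized wheel.
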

\begin{proof}
Since $\Omega$ has no blocking vertex, it has at least three vertices.
Moreover, by Lemmas~\ref{lem:1sep} and~\ref{lem:2sep} we may assume that $\Omega$ is 2-connected.
If $\Omega$ has exactly three vertices, then it is a fat triangle.
Assume that $\Omega$ has exactly four vertices. If some vertex $v$ of $\Omega$ is not adjacent any parallel edges, define $H$ to be a maximal balanced subgraph of $\Omega$ containing all the edges incident with $v$. Let $v_1,v_2$ and $v_3$ be the vertices of $\Omega$ other than $v$. Then there is at least one $v_1v_2$-edge not in $H$ (and forming an unbalanced cycle with $H$), otherwise $v_3$ would be a blocking vertex of $\Omega$. Similarly for the other two pairs of vertices $v_i$ and $v_j$ with $i\neq j$. It follows that $\Omega$ is a fat triangle. So we may assume that every vertex of $\Omega$ is adjacent with some parallel edges. Moreover, since $\Omega$ is simple and has no two disjoint unbalanced cycles, there is a vertex $v$ of $\Omega$ such that all parallel edges are incident with $v$ and for any other vertex $w$ of $\Omega$ there is a pair of parallel edges between $v$ and $w$. Then $\Omega$ is a generalized wheel (with center $v$).

Finally suppose that $\Omega$ has exactly five vertices $v_1,v_2,v_3,v_4,v_5$.
\lref{4conn} implies that either $\Omega$ satisfies cases (T1) or (T3) or all the vertices of $\Omega$ are pairwise adjacent, i.e. $||\Omega||$ is obtained from $K_5$ by possibly adding parallel edges.

Assume that (T2) does not hold. By symmetry we may assume that there are two unbalanced 2-cycles in $\Omega$ between vertex $v_1$ and vertices $v_2$ and $v_3$. Then triangles $v_2v_4v_5v_2$ and $v_3v_4v_5v_3$ are balanced. Thus, the 4-cycle $v_5v_2v_4v_3v_5$ is balanced.
If there is also an unbalanced 2-cycle between $v_2$ and $v_3$, then $\Omega$ is a fat triangle.
So now suppose that there is no such unbalanced 2-cycle.
Since the 4-cycle $v_5v_2v_4v_3v_5$  is balanced, and it forms a theta subgraph with the edge $v_2v_3$, triangles $v_2v_3v_4v_2$ and $v_2v_3v_5v_2$ are either both balanced or both unbalanced. Since $v_1$ is not a blocking vertex, they are both unbalanced.
It follows that there are no other unbalanced 2-cycles in $\Omega$. Now if the triangle $v_1v_4v_5v_1$ is balanced, then $\Omega$ is a fat triangle. Otherwise $\Omega$ is a generalized wheel (with center $v_1$ and, in the definition of generalized wheel, vertices $z_1=v_2$ and $z_2=v_3$.)
\end{proof}

Next we prove a useful lemma to identify when $\Omega$ is signed-graphic.

\begin{lemma}\label{lem:2balanced}
Let $\Omega'$ be a maximal balanced subgraph of a biased graph $\Omega$. Suppose that $\Omega$ contains no two disjoint unbalanced cycles.
If $F$ is 2-balanced with respect to $\Omega'$ then $\Omega' \cup F$ is a signed graph with signature $F$.
\end{lemma}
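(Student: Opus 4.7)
My plan is to prove, by induction on $k=|C\cap F|$, that every cycle $C$ of $\Omega'\cup F$ is balanced in $\Omega$ if and only if $k$ is even; this is precisely the assertion that $\Omega'\cup F$ is the signed graph with signature $F$.

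The three base cases are immediate. If $k=0$ then $C\subseteq\Omega'$, which is balanced. If $k=1$ write $C$ as an $f$-cycle for $\Omega'$ with $f\in F$; the theta property forces all $f$-cycles for $\Omega'$ to share a bias, and maximality of $\Omega'$ rules out that this common bias is balanced (otherwise $\Omega'\cup\{f\}$ would be a strictly larger balanced subgraph). If $k=2$ then $C$ is an $\{e_1,e_2\}$-cycle for $\Omega'$ with $e_1,e_2\in F$, hence balanced by the $2$-balanced hypothesis.

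For $k\ge 3$, I would choose two $F$-edges $e_1,e_2$ consecutive along $C$ in the sense that the shorter of the two subpaths of $C$ joining them, call it $P$, uses no other edge of $F$; let $Q$ be the complementary subpath of $C$, carrying the other $k-2$ edges of $C\cap F$. The key step is to produce a path $R$ in $\Omega'$ whose endpoints are the two common endpoints of $P$ and $Q$ and whose interior avoids $V(C)$. Given such an $R$, the subgraph $C\cup R$ is a theta whose three cycles are $C$, the $\{e_1,e_2\}$-cycle $P\cup R$ for $\Omega'$ (balanced by the $2$-balanced hypothesis), and $Q\cup R$ (which has $k-2$ edges in $F$, so by the inductive hypothesis is balanced iff $k$ is even). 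The theta property, which forbids exactly two balanced cycles in any theta, then pins down the bias of $C$: if $k$ is even, $C$ must be balanced to join the other two balanced auxiliary cycles, and if $k$ is odd, $P\cup R$ is balanced while $Q\cup R$ is unbalanced, forcing $C$ to be unbalanced.

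The main technical obstacle is the existence of the path $R$. I would first observe that $\Omega'$ is spanning, since any vertex missed by $\Omega'$ would allow an incident edge to be added without creating any new cycle, contradicting maximality. If no suitable chord of $\Omega'$ disjoint from $V(C)$ exists for any choice of consecutive pair $(e_1,e_2)$, then $\Omega'$ has a small vertex cut inside $V(C)$ separating the prescribed endpoints for every such pair; I expect to exploit this rigid local structure along $C$, together with the hypothesis that $\Omega$ has no two vertex-disjoint unbalanced cycles, either to reach a direct contradiction or to permit a chord in $\Omega'\cup F$ carrying a single extra edge of $F$, at which point a refined induction on $(|C\cap F|,|R\cap F|)$ ordered lexicographically completes the argument.
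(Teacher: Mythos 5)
There is a genuine gap at the heart of your inductive step: the path $R$ you need --- joining the two specific endpoints shared by $P$ and $Q$ and internally disjoint from $V(C)$ --- need not exist, and your fallback for when it does not is only a statement of intent, not an argument. Nothing about maximality or $4$-connectedness is available here (the lemma is stated for an arbitrary biased graph with no two disjoint unbalanced cycles), so $\Omega'$ may well connect the components of $C\setminus F$ only through attachments at the ``wrong'' places: for instance $C$ may be spanning and the only $\Omega'$-chords of $C$ may run between non-adjacent segments. If you relax the requirement and take an arbitrary path of $\Omega'$ joining two segments of $C\setminus F$ (which does exist, since $\Omega'$ is connected by maximality), the resulting theta splits the $k$ edges of $C\cap F$ as $k_1+k_2=k$ with an uncontrolled split; for odd $k$ this still works (one of $k_1,k_2$ is odd and the other even, so the theta property forces $C$ unbalanced), but for even $k$ you may get $k_1,k_2$ both odd, i.e.\ both auxiliary cycles unbalanced, and then the theta property tells you nothing about $C$. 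This parity obstruction in the even case is exactly where the real work lies, and it is also where the hypothesis that $\Omega$ has no two disjoint unbalanced cycles must enter --- note that the portions of your argument you actually carried out never use that hypothesis, which should have been a warning sign.

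The paper's proof resolves this as follows: it considers the components $P_1,\dots,P_k$ of $C\setminus F$ in cyclic order and paths of $\Omega'$ connecting two of them while internally avoiding all of them. For odd $k$ any such connection suffices, as above. For even $k$ one needs a connection between $P_i$ and $P_{i+2\ell}$ (an even index gap); if no such parity-correct connection exists, the paper exhibits three connecting paths $Q_1,Q_2,Q_3$ such that $C\cup Q_1\cup Q_2\cup Q_3$ contains two disjoint unbalanced cycles (each $C\cup Q_i$ being a theta with both new cycles unbalanced by induction), contradicting the hypothesis. You would need to supply an argument of this kind --- or some other concrete use of the no-two-disjoint-unbalanced-cycles hypothesis --- to close the even case; the lexicographic refinement you gesture at is not developed enough to assess.
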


\begin{proof}
It suffices to prove the statement for each connected component of $\Omega$. Thus we assume that $\Omega$ is connected.
Let $C$ be any cycle in $\Omega' \cup F$.
To prove the result it suffices to show that $C$ is balanced if and only if $|F \cap E(C)|$ is even.
We proceed by induction on $k=|F \cap E(C)|$.
If $k=0$, then $C$ is a cycle of $\Omega'$, which is balanced, so $C$ itself is balanced.
If $k=1$ then $C$ is unbalanced by the maximality of $\Omega'$.
If $k=2$, then $C$ is balanced since $F$ is $2$-balanced.
Now suppose that $k \geq 3$.
Let $P_1,\ldots,P_k$ be the components of $C \bs F$. Each $P_i$ is a path of $\Omega'$ (possibly comprising a single vertex) and by relabelling we may assume that $P_1, \ldots, P_k$ appear in this order along $C$.
For the remainder of the proof, for distinct $i,j \in [k]$, we say that $P_i$ connects to $P_j$ if there exists an $(x_i-x_j)$-path $Q$ in $\Omega'$ with $x_i \in V(P_i)$ and $x_j \in V(P_j)$ and $Q$ is internally disjoint from all of $P_1, \ldots, P_k$. In this case the path $Q$ {\em connects} $P_i$ to $P_j$.

First suppose that $k$ is odd. Since $\Omega'$ is maximal and $\Omega$ is connected, $\Omega'$ is also connected. Thus there exists a path $Q$ connecting $P_i$ and $P_j$ for some distinct $i,j \in [k]$. Then $C \cup Q$ is a theta graph. Let $C_1$ and $C_2$ be the two cycles in $C \cup Q$ containing $Q$. Note that $C_1 \cap F$ and $C_2 \cap F$ are both nonempty. Since $Q$ does not contain any edge in $F$, we have that $k_1=|C_1 \cap F|<k$, $k_2=|C_2 \cap F|<k$ and one of $k_1$ and $k_2$ is odd, and the other is even. By induction, one of $C_1$ and $C_2$ is unbalanced and the other is balanced. Therefore, by the theta property $C$ is unbalanced.

Now suppose that $k$ is even. Suppose that some $P_i$ connects to some $P_{i+2\ell}$ for some number $\ell$, through a path $Q$. Then we may apply a similar argument to the one above, where now $k_1$ and $k_2$ are even. Thus the two cycles in $C \cup Q$ using $Q$ are balanced and the theta property implies that $C$ is balanced as well. To complete the proof it remains to show that we may always find such $P_i$ and $P_{i+2\ell}$.

If every $P_i$ connects to only one other $P_j$, then $\Omega'$ is disconnected (since $k \geq 4$).
So without loss of generality we may assume that $P_1$ connects to $P_{j_1}$ and to $P_{j_2}$, where $j_1 < j_2$.
If one of $j_1$ or $j_2$ is odd then we are done. So assume that $j_1$ and $j_2$ are even.
Choose an odd $j_3$ with $j_1 < j_3 < j_2$.
Now $P_{j_3}$ connects to some other $P_{j_4}$. If $j_4$ is odd then we are done.
Let $Q_i$ be a path connecting $P_1$ to $P_{j_i}$, for $i=\{1,2\}$.
Let $Q_3$ be a path connecting $P_{j_3}$ to $P_{j_4}$.
For $i \in [3]$, both cycles in $C \cup Q_i$ using $Q_i$ are unbalanced,
hence $C \cup Q_1 \cup Q_2 \cup Q_3$ contains two disjoint unbalanced cycles.
\end{proof}

Suppose that $\Omega$ is a tangled biased graph with a maximal $2$-connected spanning balanced subgraph $\Omega'$ as in \lref{planar}, i.e. $E(\Omega)-E(\Omega')=\{x_iy_i\ |\ i \in [m]\}$ and
$(\Omega',(x_1,\ldots,x_m,y_i,\ldots,y_m))$ is planar (where consecutive vertices may be repeated).
We say that such an $\Omega$ is a {\em projective planar} biased graph {\em based on $\Omega'$}.
Every time we refer to such a graph we will indicate by $\{f_i=x_iy_i\ |\ i \in [m]\}$ the set of edges in $E(\Omega)-E(\Omega')$.

\begin{lemma}\label{lem:hasbp}
Let $\Omega$ be a simple 4-connected projective planar tangled biased graph.
Then either
\begin{enumerate}
    \item[(1)] $\Omega$ has a blocking pair, or
    \item[(2)] $\Omega$ is a projective planar signed graph, or
    \item[(3)] $\Omega$ is projective planar with a special vertex, or
    \item[(4)] $\Omega$ is a tricoloured graph, or 
    \item[(5)] $\Omega$ is a criss-cross.
\end{enumerate}
\end{lemma}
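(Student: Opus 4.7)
My plan is to analyze $\Omega$ via its maximal 2-connected spanning balanced subgraph $\Omega'$ and the set of extra edges $F = \{f_i = x_iy_i : i \in [m]\} = E(\Omega) - E(\Omega')$, whose endpoints appear in the alternating circular order $x_1,\ldots,x_m,y_1,\ldots,y_m$ on a common face of the planar embedding of $\Omega'$.

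I first ask whether $F$ is 2-balanced with respect to $\Omega'$. If so, \lref{2balanced} implies that $\Omega$ is a signed graph with signature $F$. Using the planarity of $\Omega'$ and the interlocking cyclic order of the endpoints of $F$, I can extend the planar embedding of $\Omega'$ to an embedding of $\Omega$ in the projective plane by routing every $f_i$ through a single common crosscap; the signature $F$ then becomes a non-separating cycle of the topological dual of $\Omega$, giving conclusion~(2).

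If $F$ is not 2-balanced, there exist $f_i,f_j\in F$ admitting an unbalanced $\{f_i,f_j\}$-cycle. I would organise the edges of $F$ into color classes $F_1,\ldots,F_k$, where edges inside the same class admit only balanced $\{f,f'\}$-cycles (relative to an appropriate balanced sub-subgraph), whereas edges from different classes admit unbalanced ones. The theta property together with the 2-connectivity of $\Omega'$ and the hypothesis that no two unbalanced cycles of $\Omega$ are disjoint would then force this partition to be well-defined. Case~$k=1$ is the 2-balanced case already treated. When $k=2$, with one class of size at most two, I would identify that small class with the pair $\{g_1,g_2\}$ incident to a special vertex $w$, the other class with the crossing edges of the special-vertex structure, and the distinguished edge $u_1u_2$ as the one whose presence in a cycle flips its bias; this yields conclusion~(3). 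If the structure degenerates so that every edge of $F$ is incident to the same pair of vertices, conclusion~(1) follows instead. When $k=3$, I would identify the three classes with $E_1,E_3,E_5$ (or $E_1,E_2,E_3$) of the tricoloured definition, and verify the required 3-planarity via \lref{link5} and \lref{link6}, giving conclusion~(4). Finally I would exclude $k\geq 4$ by exhibiting two disjoint cross-class unbalanced cycles, contradicting tangledness.

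The main obstacle, I expect, lies in the $k=2$ case: I must show that the 4-connectivity of $\Omega$ and the two-class partition on $F$ together force $\Omega-w$ to split into two planar halves $H_1,H_2$ meeting exactly in the edge $u_1u_2$, and that an $\{f_i,f_j\}$-cycle is unbalanced precisely when it uses this edge. A similar subtlety pervades the tricoloured case, where the 2-balanced property holds only relative to specific sub-subgraphs $H_{i+3}$ rather than the whole of $\Omega'$, so the argument must carefully identify these sub-subgraphs and the associated cross-class unbalanced cycles. Throughout, the verification will rely on repeated use of the theta property and controlled rerouting along balanced subcycles of $\Omega'$.
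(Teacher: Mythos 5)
Your high-level strategy---dispose of the 2-balanced case via \lref{2balanced}, then partition the remaining chord set into ``colour classes'' and match the number and arrangement of classes to the structures (3) and (4)---is the same as the paper's. But there is a genuine gap at the very first step of the non-2-balanced case: the colour partition you propose is not well-defined with respect to $\Omega'$, and you have not supplied the construction that makes it well-defined. For two independent chords $f_i,f_j$, different $\{f_i,f_j\}$-cycles for $\Omega'$ can have different biases (the paper's own $K_4$ example in Section 2 illustrates exactly this), and even if one restricts to pairs all of whose cycles agree, the relation ``every $\{f,f'\}$-cycle is balanced'' is not transitive in general. You wave at this by saying the classes are taken ``relative to an appropriate balanced sub-subgraph,'' but identifying that sub-subgraph is the technical heart of the proof. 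The paper does it by locating a specific edge $g=y_2y_3$ of $\Omega'$ such that all $\{f_i,f_j\}$-cycles for $\Omega'-g$ have the same bias as the canonical cycle $C_{i,j}$ on the outer face; only then is $\sim$ an equivalence relation. Finding $g$ requires the whole chain of claims about \emph{close unbalanced pairs} $\{f_i,f_{i+1}\}$, their associated \emph{diagonal 2-vertex-cuts}, the fact that any two such pairs share a vertex, and finally that some two of them share a vertex that is \emph{not} an intermediate vertex of $\Omega'$ (which forces that shared vertex to have degree 2 in $\Omega'$ and forces $g$ to be an edge). None of this appears in your proposal, and without it your cases $k=1,2,3,\geq 4$ cannot even be set up.

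A secondary point: the dichotomy between ``special vertex'' and ``tricoloured'' is not governed by the number of colour classes alone but by how the classes sit relative to the degree-2 vertex $x_2$ and the edge $g$ --- e.g.\ the paper's Case 1--3 analysis of whether the edges outside $\{f_2,f_3\}$ all attach at one of $x_1,x_4,y_1,y_4$, and whether the two resulting groups receive the same or different colours. Your ``$k=2$ gives special vertex, $k=3$ gives tricoloured, $k\geq4$ is impossible'' is too coarse: in the paper's analysis the special-vertex outcome arises with two colours \emph{and} a particular attachment pattern, the tricoloured outcome with three colours \emph{and} specific diagonal cuts $\{c,c'\}$, $\{d,d'\}$ supplying the six pieces $H_1,\dots,H_6$, and there is no separate ``$k\geq4$'' case to exclude --- that possibility is ruled out implicitly by the structural claims you skipped. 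You correctly identified where the difficulty lies, but the proof as proposed does not close it.
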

\begin{proof}
Suppose that $\Omega$ is projective planar based on $\Omega'$. Thus $(\Omega',(x_1,\ldots,x_m,y_i,\ldots,y_m))$ is planar and $U:=\{f_i=x_iy_i\ |\ i \in [m]\}=E(\Omega)-E(\Omega')$.
Unless otherwise specified, throughout the proof we will refer to $A$-cycles (for some edge or some set $A$ of edges), meaning an $A$-cycle with respect to $\Omega'$.
By the maximality of $\Omega'$, every $f_i$-cycle is unbalanced.
Denote by $C$ the face boundary of $\Omega'$ containing $x_1,\ldots,x_m,y_1,\ldots,y_m$.

We assume that we are not in case (1) (i.e. $\Omega$ does not have a blocking pair).

\begin{claim}\label{cl:indep}
For every $f_i \in U$ there exists an edge $f_j \in U$ that is independent from $f_i$ and $f_{i+1}$.
\end{claim}
\begin{cproof}
For simplicity, assume that $i=1$. Since $x_1,x_2$ is not a blocking pair, there is an edge $f_k$ not incident with $x_1,x_2$. If $f_k$ is not incident with $y_1$ and $y_2$ then we are done. Otherwise we may assume that $y_k=y_2$.
Similarly, $y_1$ and $y_2$ do not form a blocking pair, so there is an edge $f_\ell$ with $y_\ell=x_1$ ($f_\ell$ has to be incident with $x_1$ because of the position of $x_k$). Now one can check that $x_1,y_2$ form a blocking pair, a contradiction.
\end{cproof}


When $\Omega$ contains two parallel edges $f$ and $f'$, since $\Omega$ is simple, $\{f,f'\}$ is an unbalanced cycle of $\Omega$ and the endpoints of $f$ intersect all unbalanced cycles of $\Omega$, thus forming a blocking pair. Therefore there are no parallel edges in $U$.

Because of  \lref{2balanced}, if $U$ is 2-balanced with respect to $\Omega'$ then $\Omega$ is a projective planar signed graph (i.e. (2) holds).
Thus for the remainder of the proof we assume that $U$ is not 2-balanced with respect to $\Omega'$.
Hence there exist $i,j \in [m]$, with $i<j$, such that some $\{f_i,f_j\}$-cycle  is unbalanced.
We may assume that, aside from $f_i$ and $f_j$, such cycle comprises an $(x_i,x_j)$-path $P_x$ and a $(y_i,y_j)$-path $P_y$ in $\Omega'$.
Since $\Omega'$ is planar, $C[x_i,x_{i+1},\ldots, x_j]$ may be obtained from $P_x$ by rerouting along balanced cycles (disjoint from $P_y$) and $C[y_i,y_{i+1},\ldots,y_j]$ may be obtained from $P_y$ by rerouting along balanced cycles (disjoint from $P_x$). Therefore, $C[x_i,x_{i+1},\ldots,x_j] \cup C[y_i,y_{i+1},\ldots,y_j] \cup \{f_i,f_j\}$ is an unbalanced cycle.

Next we show that we may choose $i$ and $j$ so that $j=i+1$. Choose $i$ and $j$ so that $j-i$ is minimised. If $j \neq i+1$ (so $f_{i+1}\neq f_j$), then $C[x_i,x_{i+1},\ldots,x_j] \cup C[y_i,y_{i+1},\ldots,y_j] \cup \{f_i,f_{i+1},f_j\}$ is a theta subgraph, thus either the cycle using $f_i$ and $f_{i+1}$ or the cycle using $f_{i+1}$ and $f_j$ is unbalanced, contradicting the choice of $i$ and $j$. It follows that $U$ contains edges $f_i$ and $f_{i+1}$ such that $C[x_i,x_{i+1}] \cup C[y_i,y_{i+1}] \cup \{f_i,f_{i+1}\}$ is unbalanced. We call such a pair $\{f_i,f_{i+1}\}$ a {\em close unbalanced pair}. In this definition we choose indices modulo $m$, so it may be that $f_1$ and $f_m$ form a close unbalanced pair. If $\{f_i,f_{i+1}\}$ is a close unbalanced pair, we denote by $C_{i,i+1}$ the unbalanced cycle $C[x_i,x_{i+1}] \cup C[y_i,y_{i+1}] \cup \{f_i,f_{i+1}\}$. 

Next we introduce a few more definitions that will come in handy for the remainder of the proof.
If a vertex $c$ is in a 2-vertex-cut of $\Omega'$, then we say that $c$ is an {\em intermediate} vertex of $\Omega'$. If a 2-vertex-cut $\{c,d\}$ of $\Omega'$ intersects all $(x_i,y_i)$-paths for every $i \in [m]$, then we say that $\{c,d\}$ is a {\em diagonal} 2-vertex-cut of $\Omega'$.
Let $\{f_i,f_{i+1}\}$ be a close unbalanced pair and let
$f_k$ be an edge in $U$ independent from $f_i$ and $f_{i+1}$ (as in \clref{indep}). Since $C_{i,i+1}$ intersects every $f_k$-cycle,
there exist vertices $c \in C[x_i,x_{i+1}]$ and $d \in C[y_i,y_{i+1}]$ such that $\{c,d\}$ is a 2-vertex-cut separating $x_k$ from $y_k$. Then $\{c,d\}$ is a diagonal 2-vertex-cut of $\Omega'$. We say that $\{c,d\}$ is a diagonal 2-vertex-cut {\em associated} with the close unbalanced pair $\{f_i,f_{i+1}\}$.

\begin{claim}\label{cl:anotherpair}
There exists at least two close unbalanced pairs.
\end{claim}
\begin{cproof}
We already showed that there exists a close unbalanced pair. To simplify notation suppose that $\{f_1,f_2\}$ is such a pair.
Let $\{c,d\}$ be a diagonal 2-vertex-cut of $\Omega'$ associated with $\{f_1,f_2\}$.
Since $\{c,d\}$ is not a blocking pair, there exists an unbalanced cycle $C'$ avoiding $c$ and $d$.
Thus $|E(C')\cap U|\geq 2$ and by the theta property we may in fact choose $C'$ so that $|E(C') \cap U|=2$.
Then again the theta property (and \clref{indep}, if $E(C') \cap U = \{f_1,f_2\}$) implies that
$U$ contains another close unbalanced pair.
\end{cproof}

\begin{claim}\label{cl:commonvtx}
Any two close unbalanced pairs share at least one vertex and at most two. If they share two, then those are the endpoint of an edge common to the two pairs.
\end{claim}
\begin{cproof}
Let $\{f_i,f_{i+1}\}$ and $\{f_j,f_{j+1}\}$ be two distinct close unbalanced pairs.
Since $C_{i,i+1}$ and $C_{j,j+1}$ intersect, at least one of the edges $f_i$ and $f_{i+1}$ shares an endpoint with one of $f_j$ and $f_{j+1}$. Thus the two pairs share at least one vertex. Now it is easy to see that the second part of the claim holds since $\Omega$ is projective planar.
\end{cproof}

\begin{claim}\label{cl:share}
Let $\{f_i,f_{i+1}\}$ and $\{f_{i+1},f_{i+2}\}$ be two close unbalanced pairs.
Then the endpoints of $f_{i+1}$ form a diagonal 2-vertex-cut of $\Omega'$.
\end{claim}
\begin{cproof}
To simplify notation we assume that $i=1$, i.e.
the two close unbalanced pairs are $\{f_1,f_2\}$ and $\{f_2,f_3\}$.
Assume that $x_2$ is not an intermediate vertex of $\Omega'$.
Then there are vertices $c\in C[x_1,x_2)$ and  $c' \in C(x_2,x_3]$
such that $c$ and $c'$ are intermediate vertices of $\Omega'$.
In particular, $x_1 \neq x_2$ and $x_2 \neq x_3$, so the only edge in $U$ that is incident with $x_2$ is $f_2$.
Then $\{c,c',y_2\}$ is a 3-vertex-cut of $\Omega$, a contradiction.
So $x_2$ is an intermediate vertex. By symmetry  $y_2$ is also an intermediate vertex.
It follows that $\{x_2,y_2\}$ is a diagonal 2-vertex-cut.
\end{cproof}

\begin{claim}\label{cl:intermediate}
Let $\{f_i,f_{i+1}\}$ and $\{f_j,f_{j+1}\}$ be two close unbalanced pairs sharing exactly one vertex (say $x_{i+1}=x_j$).
Then $j=i+2$ and $y_{i+1}$ and $y_j$ are intermediate vertices of $\Omega'$.
Moreover,  either $x_{i+1}$ is an intermediate vertex, or there exist $c\in C[x_i,x_{i+1})$ and $c'\in C(x_j,x_{j+1}]$ such that $\{c,y_{i+1}\}$ and $\{c',y_j\}$ are diagonal 2-vertex-cuts of $\Omega'$.
\end{claim}
\begin{cproof}
To simplify notation we assume that $i=1$, and the pairs share the vertex $x_2=x_j$. Then $y_2 \neq y_j$ and there are vertices $c\in C[x_1,x_2]$ and  $d \in C[y_1,y_2]$ such that $\{c,d\}$ is a diagonal 2-vertex-cut and vertices $c'\in C[x_j,x_{j+1}]$ and  $d' \in C[y_j,y_{j+1}]$ such that $\{c',d' \}$ is a diagonal 2-vertex-cut. If $y_2$ or $y_j$ is not an intermediate vertex, then $\{x_2,d,d'\}$ is a 3-vertex-cut of $\Omega'$. Thus both  $y_2$ and $y_j$ are intermediate vertices; moreover $y_2y_j$ is an edge of $\Omega'$ (so $j=3$ and the close unbalanced pair $\{f_j,f_{j+1}\}$ is in fact $\{f_3,f_4\}$). If $x_2$ is an intermediate vertex then we are done. Else, $c$ and $c'$ are distinct from $x_2$ and the claim follows.
\end{cproof}

\begin{claim}\label{cl:notint}
There exist two close unbalanced pairs with a common vertex $x$ which is not an intermediate vertex of $\Omega'$.
\end{claim}
\begin{cproof}
Suppose this is not the case.
\clref{share} and \clref{intermediate} imply that for every two close unbalanced pairs there exists a diagonal 2-vertex-cut of $\Omega'$ containing all vertices common to the two pairs.
Suppose that $\{f_i,f_{i+1}\}$ and $\{f_j,f_{j+1}\}$ are close unbalanced pairs and let $\{c,d\}$ be a diagonal 2-vertex-cut containing the vertices common to the two pairs.
Since $\{c,d\}$ is not a blocking pair, there exists a third close unbalanced pair $\{f_k,f_{k+1}\}$  such that $\{c,d\}$ does not intersect $C_{k,k+1}$. Therefore, no vertex of $\Omega'$ belongs to all three close unbalanced pairs $\{f_i,f_{i+1}\}$, $\{f_j,f_{j+1}\}$ and $\{f_k,f_{k+1}\}$.
\clref{share} and \clref{intermediate} imply that $U=\{f_i,f_{i+1},f_j,f_{j+1},f_k,f_{k+1}\}$
and by symmetry we may assume that $i<j<k$.
Thus $f_{i+1}$ shares an intermediate vertex $w_1$ with $f_j$,
$f_{j+1}$ shares an intermediate vertex $w_2$ with $f_k$,
and $f_{k+1}$ shares an intermediate vertex $w_3$ with $f_i$.
Then $w_1,w_2,w_3$ are three distinct intermediate vertices of $\Omega'$. Since  $\Omega-\{w_1,w_2,w_3\}$ is connected, we have that $\Omega'$ comprises only a, say, $\{w_1,w_2\}$-bridge and two edges $w_1w_3$ and $w_2w_3$. It follows that either $\{w_1,w_2\}$ is a blocking pair of $\Omega$ or $\{f_{k+1},f_i\}$ is a close unbalanced pair. In this case either $\Omega$ has two disjoint unbalanced cycles or a 3-vertex-cut, a contradiction.
\end{cproof}

To complete the proof it remains to show the following.

\begin{claim}
Either $\Omega$ is projective planar with a special vertex or it is a tricoloured graph.
\end{claim}
\begin{cproof}
\clref{notint} and \clref{intermediate} imply that there exist close unbalanced pairs $\{f_i,f_{i+1}\}$ and $\{f_j,f_{j+1}\}$ sharing vertex $x_{i+1}=x_j$ such that $x_{i+1}$ is not an intermediate vertex of $\Omega'$.
To simplify notation assume that $i=1$. That is, the two close unbalanced pairs are $\{f_1,f_2\}$ and $\{f_3,f_4\}$ and $x_2=x_3$ is not an intermediate vertex. By \clref{intermediate}, there exist vertices $c\in C[x_1,x_2)$ and $c'\in C(x_3,x_4]$    such that $\{c,y_2\}$ and $\{c',y_3\}$ are diagonal 2-vertex-cuts of $\Omega'$. In particular, this implies that $x_1 \neq x_2$ and $x_3 \neq x_4$. This also implies that $x_2$ is a degree-2 vertex in $\Omega'$ (adjacent to $c$ and $c'$), for otherwise $\{x_2,c,c'\}$ is a 3-vertex-cut of $\Omega$. Moreover, $cc'$ is an edge of $\Omega'$ as $x_2$ is not an intermediate vertex of $\Omega'$.
Since  $\{x_2,y_2,y_3\}$ is not a 3-vertex-cut of $\Omega$, there is only one $\{y_2,y_3\}$-bridge in $\Omega'$ (the one containing $x_2$) and $y_2y_3$ is an edge of $\Omega'$. Since we will frequently refer to this edge, we denote it as $g=y_2y_3$.
For any two $f_i,f_j \in U$ (with $i<j$), denote by $C_{i,j}$ the cycle $C[x_i,x_{i+1},\ldots,x_j] \cup C[y_i,y_{i+1},\ldots,y_j] \cup \{f_i,f_j\}$. We also denote as $C_{i,1}$ the cycle $C[y_i,y_{i+1},\ldots,y_m,x_1] \cup C[x_i,x_{i+1},\ldots,x_m,y_1] \cup \{f_1,f_i\}$, for every $1<i \leq m$.
Since any $\{f_i,f_j\}$-cycle $C'$ for $\Omega'-g$ may be obtained by rerouting $C_{i,j}$ along balanced cycles in $\Omega'-g$,
\begin{itemize}
    \item[(P1)] for any two $f_i,f_j \in U$ (with $i<j$), every $\{f_i,f_j\}$-cycle for $\Omega'-g$ has the same bias as $C_{i,j}$.
\end{itemize}
Define a relation $\sim$ on $U$ as $f_i \sim f_j$ if either $i=j$ or $\{f_i,f_j\}$ is 2-balanced for $\Omega'-g$. We claim that
\begin{itemize}
    \item[(P2)] the relation $\sim$ is an equivalence relation on $U$.
\end{itemize}
Clearly $\sim$ is reflexive and symmetric. Now suppose that $f_i \sim f_j$ and $f_j \sim f_k$. To simplify notation assume that $i<j<k$. Then $C_{i,j}$ and $C_{j,k}$ are both balanced, and they form a theta subgraph, whose third cycle is $C_{i,k}$. It follows that $C_{i,k}$ is balanced and (P1) implies that $\{f_i,f_k\}$ is 2-balanced for $\Omega'-g$.

We assign to each equivalence class for $\sim$ a colour, so that distinct classes get different colours. Then (P1) implies that if $f_i$ and $f_j$ have distinct colours, every $\{f_i,f_j\}$-cycle for $\Omega'-g$ is unbalanced.

Applying \clref{indep} to $f_2$ and $f_3$ shows that there exists an edge $f_k \in U$ not incident with $y_2,y_3$ and $x_2$. Thus, since $cc'$ is an edge of $\Omega'$, the cycle $\{f_2,f_3,g\}$ is disjoint from some $f_k$-cycle. It follows that $\{f_2,f_3,g\}$ is balanced. Moreover, since $f_2$ and $f_3$ share a vertex, $\{f_2,f_3\}$ is 2-balanced for $\Omega'$. In particular, $f_2$ and $f_3$ have the same colour.

Define the set $U'=\{f_k \in U\ |\ x_k,y_k \notin \{x_1,y_1,x_4,y_4\}\}-\{f_2,f_3\}$ (i.e. $U'$ is the set of edges in $U$ that are independent from $f_1,f_2,f_3$ and $f_4$). First suppose that $U'$ is nonempty. We claim that in this case $\Omega$ is projective planar with a special vertex. First we show that all the edges in $U-\{f_2,f_3\}$ have the same colour.
Pick any $f_k \in U'$; then $C_{k,1}$ is disjoint from $C_{3,4}$, thus $C_{k,1}$ is balanced. Similarly, $C_{4,k}$ is disjoint from $C_{1,2}$, thus $C_{4,k}$ is also balanced. It follows that every edge in $U'$ has the same colour as $f_1$ and $f_4$. A similar argument shows that any other edge in $U$ incident with $x_1,x_4,y_1$ or $y_4$ has also the same colour. Thus all the edges in $U-\{f_2,f_3\}$ have the same colour. Such colour is different from the colour of $f_2$ and $f_3$, since $\{f_1,f_2\}$ is a close unbalanced pair. It follows that in this case $\Omega$ is projective planar with a special vertex.
We therefore assume for the remainder of the proof that $U'$ is empty. This implies that all the edges in $U-\{f_2,f_3\}$ are incident to a vertex in $\{x_1,x_4,y_1,y_4\}$ (because $\Omega$ is projective planar on $\Omega'$). Up to symmetry, this leaves three possibilities.

\textbf{Case 1:}  $U-\{f_2,f_3\}$ partitions into two sets $U_1$ and $U_4$, where all the edges in $U_1$ are incident to $x_1$ and all the edges in $U_4$ are incident to $x_4$. In this case at most one of $x_1$ and $x_4$ is an intermediate vertex, for otherwise $\{x_1,x_4\}$ is a blocking pair (since $\{f_2,f_3\}$ is 2-balanced). Moreover, $x_1 \neq y_4$ (and symmetrically, $x_4 \neq y_1$), for otherwise $U_4=\{f_4\}$ and $\{x_1,x_2\}$ is a blocking pair.
If $U_1$ and $U_4$ contain no close unbalanced pair, then all edges in $U_1$ have the same colour and all edges in $U_4$ have the same colour. Else we may assume (by symmetry) that there is a close unbalanced pair $\{f_i,f_{i+1}\}$ in $U_1$. Since every $f_3$-cycle intersects $C_{i,i+1}$, we have that $x_1$ is an intermediate vertex.
Since the close unbalanced pairs $\{f_3,f_4\}$ and $\{f_i,f_{i+1}\}$ share a vertex, we have $x_i=x_4$ (i.e. $f_i$ is an edge between $x_1$ and $x_4$). Thus the edges in $U_1-\{f_i\}$ have all the same colour. Now if $U_4 \cup \{f_i\}$ is not 2-balanced for $\Omega'-g$, then $x_4$ is also an intermediate vertex, a contradiction to the fact that at most one of $x_1$ and $x_4$ is an intermediate vertex. It follows that $U_1-\{f_i\}$ and $U_4 \cup \{f_i\}$ are both
equivalence classes for $\sim$. Thus we may assume that we started with $U_1$ and $U_4$ where all the edges in $U_1$ have the same colour and all the edges in $U_4$ have the same colour.

Now if $U_1 \cup U_4$ have the same colour, then $\Omega$ is again projective planar with a special vertex.
So we may assume that $U_1$ and $U_4$ have different colours.
Such colours are both different from the colour of $f_2$ and $f_3$ (since $\{f_1,f_2\}$ and $\{f_3,f_4\}$ are close unbalanced pairs). We show that in this case $\Omega$ is a tricoloured graph.
Denote as $N_1$ the neighbour of $x_1$ via edges in $U_1$ and as $N_4$ the neighbours of $x_4$ via edges in $U_4$.
Since $U_1$ and $U_4$ have different colours, for every $f_i \in U_1$ and every $f_j \in U_4$, every $\{f_i,f_j\}$-cycle for $\Omega'-g$ intersects every $f_2$-cycle and every $f_3$-cycle. Therefore there is a 2-vertex-cut $\{d,d'\}$ of $\Omega'$ with $d\in C[y_4,\ldots,y_m,x_1]$ and $d'\in C[x_4,\ldots,x_m,y_1]$, where $\{d,d'\}$ separates $x_1$ from $N_1$ and $x_4$ from $N_4$. Now $\Omega$ is a tricoloured graph where (following the notation in the definition of tricoloured graphs given in \sref{tricoloured}) we have $I=\{1,2,3\}$ and:
\begin{itemize}
    \item $H_1=\{c\}$ if $c=d$, otherwise $H_1$ is the $\{d,c\}$-bridge of $\Omega'$ not containing $x_2$ (or $H_1=\{dc\}$ if there is only one $\{d,c\}$-bridge);
    \item $H_2$ is the $\{c,c'\}$-bridge containing $x_2$;
    \item $H_3=\{c'\}$ if $c'=d'$, otherwise $H_3$ is the $\{c',d'\}$-bridge of $\Omega'$ not containing $x_2$ (or $H_3=\{c'd'\}$ if there is only one $\{c',d'\}$-bridge);
    \item $H_4=\{d'\}$ if $y_2 = d'$, otherwise $H_4$ is the $\{d',y_2\}$-bridge of $\Omega'$ not containing $x_2$ (or $H_4=\{d'y_2\}$ if there is only one $\{d',y_2\}$-bridge);
    \item $H_5=\{g\}$;
    \item $H_6=\{d\}$ if $y_3 = d$, otherwise $H_6$ is the $\{y_3,d\}$-bridge of $\Omega'$ not containing $x_2$ (or $H_6=\{y_3d\}$ if there is only one $\{y_3,d\}$-bridge).
\end{itemize}

\textbf{Case 2:}  $U-\{f_2,f_3\}$ partitions into two sets $U_1$ and $U_4$, where all the edges in $U_1$ are incident to $y_1$ and all the edges in $U_4$ are incident to $y_4$. This case is very similar to Case 1; we include the proof for completeness.

At most one of $y_1$ and $y_4$ is an intermediate vertex, for otherwise $\{y_1,y_4\}$ is a blocking pair (since $\{f_2,f_3\}$ is 2-balanced). Moreover, $x_1 \neq y_4$ (and symmetrically, $x_4 \neq y_1$), for otherwise $U_1=\{f_1\}$ and $\{y_4,x_2\}$ is a blocking pair.
If $U_1$ and $U_4$ contain no close unbalanced pair, then all edges in $U_1$ have the same colour and all edges in $U_4$ have the same colour. Else we may assume (by symmetry) that there is a close unbalanced pair $\{f_i,f_{i+1}\}$ in $U_1$. Since every $f_2$-cycle intersects $C_{i,i+1}$, we have that $y_1$ is an intermediate vertex.
Since the close unbalanced pairs $\{f_3,f_4\}$ and $\{f_i,f_{i+1}\}$ share a vertex, we have $y_i=y_4$ (i.e. $f_i$ is an edge between $y_1$ and $y_4$). Thus the edges in $U_1-\{f_i\}$ have all the same colour. Now if $U_4 \cup \{f_i\}$ is not 2-balanced for $\Omega'-g$, then $y_4$ is also an intermediate vertex, a contradiction to the fact that at most one of $y_1$ and $y_4$ is an intermediate vertex. It follows that $U_1-\{f_i\}$ and $U_4 \cup \{f_i\}$ are both
equivalence classes for $\sim$. Thus we may assume that we started with $U_1$ and $U_4$ where all the edges in $U_1$ have the same colour and all the edges in $U_4$ have the same colour.

Now if $U_1 \cup U_4$ have the same colour, then $\Omega$ is projective planar with a special vertex.
So we may assume that $U_1$ and $U_4$ have different colours.
Such colours are both different from the colour of $f_2$ and $f_3$ (since $\{f_1,f_2\}$ and $\{f_3,f_4\}$ are close unbalanced pairs).
We show that in this case $\Omega$ is a tricoloured graph.
Denote as $N_1$ the neighbour of $y_1$ via edges in $U_1$ and as $N_4$ the neighbours of $y_4$ via edges in $U_4$.
Since $U_1$ and $U_4$ have different colours, there is a 2-vertex-cut $\{d,d'\}$ of $\Omega'$ with $d\in C[y_4,\ldots,y_m,x_1]$ and $d'\in C[x_4,\ldots,x_m,y_1]$, where $\{d,d'\}$ separates $y_1$ from $N_1$ and $y_4$ from $N_4$.
Now $\Omega$ is a tricoloured graph where (following the notation in the definition of tricoloured graphs given in \sref{tricoloured}) we have $I=\{1,3,5\}$ and:
\begin{itemize}
    \item $H_1$ is the $\{c,c'\}$-bridge containing $x_2$;
    \item $H_2=\{c'\}$ if $c'=d'$, otherwise $H_2$ is the $\{c',d'\}$-bridge of $\Omega'$ not containing $x_2$ (or $H_2=\{c'd'\}$ if there is only one $\{c',d'\}$-bridge);
    \item $H_3=\{d'\}$ if $y_2 = d'$, otherwise $H_3$ is the $\{d',y_2\}$-bridge of $\Omega'$ not containing $x_2$ (or $H_3=\{d'y_2\}$ if there is only one $\{d',y_2\}$-bridge);
    \item $H_4=\{g\}$;
    \item $H_5=\{d\}$ if $y_3 = d$, otherwise $H_5$ is the $\{y_3,d\}$-bridge of $\Omega'$ not containing $x_2$ (or $H_5=\{y_3d\}$ if there is only one $\{y_3,d\}$-bridge);
    \item $H_6=\{c\}$ if $c=d$, otherwise $H_6$ is the $\{d,c\}$-bridge of $\Omega'$ not containing $x_2$ (or $H_6=\{dc\}$ if there is only one $\{d,c\}$-bridge).
\end{itemize}

\textbf{Case 3:} $U-\{f_2,f_3\}$ partitions into two sets $U_1$ and $U_4$, where all the edges in $U_1$ are incident to $y_1$ and all the edges in $U_4$ are incident to $x_4$. Since $\{x_2,x_4\}$ is not a blocking pair, $y_1 \neq x_4$.
First suppose that $U_4$ contains a close unbalanced pair $\{f_i,f_{i+1}\}$.
Since $C_{i,i+1}$ and $C_{1,2}$ intersect, this implies that $f_{i+1}$ is an edge between $x_4$ and $x_1$. Thus $U_1=\{f_1\}$ and we are back to Case 1.
So we may assume that all the edges in $U_4$ have the same colour.
Similarly, we either reduce to Case 2 or we may assume that all the edges in $U_1$ have the same colour.

Now if $U_1$ and $U_4$ have the same colour, then $\Omega$ is projective planar with a special vertex.
Otherwise, $U_1$ and $U_4$ have distinct colours (which are also distinct from the colour of $\{f_2,f_3\}$).
Denote as $N_1$ the neighbour of $y_1$ via edges in $U_1$ and as $N_4$ the neighbours of $x_4$ via edges in $U_4$.
Since $U_1$ and $U_4$ have different colours, there is a 2-vertex-cut $\{d,d'\}$ of $\Omega'$ with $d\in C[y_4,\ldots,y_m,x_1]$ and $d'\in C[x_4,\ldots,x_m,y_1]$, where $\{d,d'\}$ separates $y_1$ from $N_1$ and $x_4$ from $N_4$.
Now we can see (similarly to the previous cases) that $\Omega$ is a tricoloured graph.
\end{cproof}
\end{proof}

\begin{proof}[\textbf{Proof of \tref{main-thm}}]
Let $\Omega=(G,\bal)$ be a simple tangled biased graph.
By \lref{small} we may assume that $\Omega$ has at least six vertices.
If $\Omega$ is not 4-connected, then the result holds by \lref{4conn}.
Thus for the remainder of the proof we will assume that $\Omega$ is 4-connected.
By \lref{planar} we only need to consider the case that $\Omega$ is projective planar, that is, $\Omega$ contains a
$2$-connected spanning balanced subgraph $\Omega'$ such that
$(\Omega',(x_1,\ldots,x_m,y_1,\ldots,y_m))$ is planar (where consecutive vertices may be repeated), 
where $E(\Omega)-E(\Omega')=\{x_iy_i\ |\ i \in [m]\}$.
By \lref{hasbp} we may assume that $\Omega$ has a blocking pair $w_1, w_2$. 

Next we show that there is a balanced spanning subgraph $\Omega''$ of $\Omega$ such that all edges in $E(\Omega)-E(\Omega'')$ are incident to $\{w_1,w_2\}$ and such that, up to swapping $w_1$ with $w_2$, $\Omega''$ satisfies one of the following:
\begin{itemize}
	\item $\Omega''$ is 2-connected, or
	\item $\delta_{\Omega''}(w_1)=\{w_1q\}$ for $q\neq w_2$ and $\Omega''/w_1q$ is 2-connected, or
	\item $\delta_{\Omega''}(w_1)=\{w_1q_1\}$,  $\delta_{\Omega''}(w_2)=\{w_2q_2\}$, where $w_1,w_2,q_1,q_2$ are all distinct and $\Omega''/\{w_1q_1, w_2q_2\}$ is 2-connected.
\end{itemize}

Set $E(\Omega)-E(\Omega')=U$. Let $U'$ be the set of edges in $U$ not incident with $\{w_1,w_2\}$.
If $U'$ is empty then we are done, choosing $\Omega''=\Omega'$.

Let $C$ be the facial boundary cycle containing $x_1,\ldots,x_m,y_1,\ldots,y_m$.
For every $f=xy \in U'$, every $(x,y)$-path in $\Omega'$ intersects $\{w_1,w_2\}$. Thus (up to swapping $w_1$ with $w_2$) either for some $i \in [m]$, $w_1 \in C[x_{i},x_{i+1}]$ and $w_2 \in C[y_i,y_{i+1}]$ or $w_1 \in C[y_m,x_1]$ and $w_2 \in C[x_m,y_1]$. By possibly relabelling $x_1,\ldots,x_m$ and $y_1,\ldots,y_m$ we may assume that $w_1 \in C[y_m,x_1]$ and $w_2 \in C[x_m,y_1]$. It follows that $\Omega'$ contains a 2-separation $(A,B)$, where the boundary of $A$ is $\{w_1,w_2\}$ and $C[x_1,\ldots,x_m]$ is contained in $\Omega'[A]$ and $C[y_1,\ldots,y_m]$ is contained in $\Omega'[B]$. 
For $i=1,2$, let $A_i$ be the set of edges in $A$ incident with $w_i$. 
Note that $\Omega-\{w_1,w_2\}=(\Omega'\cup U')-\{w_1,w_2\}$ is 2-connected, since $\Omega$ is 4-connected. Moreover, $\Omega' \cup U' - (A_1 \cup A_2)$ is balanced, since $w_1, w_2$ is a blocking pair.
Now we may choose $\Omega''=\Omega' \cup U' - (A_1 \cup A_2)$. The desired connectivity properties for $\Omega''$ hold, since $\Omega'$ is 2-connected (with different cases occurring when there is only one edge in $B$ incident with $w_i$ or not). By our choice of $\Omega''$ we have that all edges in $E(\Omega)-E(\Omega'')$ are incident with $\{w_1,w_2\}$.

With this structure in place, we show that $\Omega$ is projective planar with either a special pair or a special triple.

Set $U''= E(\Omega)-E(\Omega'')$.
Let $U_1$ be the set of edges in $U''$ incident with $w_1$ but not with $w_2$ and
$U_2$ be the set of edges in $U''$ incident with $w_2$ but not with $w_1$.
Let $W_1$ be the set of neighbours of $w_1$ via edges in $U_1$ and $W_2$ be the set of neighbours of $w_2$ via edges in $U_2$. Then $(\Omega'',(w_1,w_2,W_1,W_2))$ is planar.

Therefore, either $\Omega'' \cup U_1$ and $\Omega'' \cup U_2$ are signed graphs (and $\Omega$ is projective planar with a special pair), or we may assume that there exist edges $e_1=w_1z_1$ and $e_2=w_1z_2$ in $U_1$ such that every $\{e_1,e_2\}$-cycle for $\Omega''$ is unbalanced. We may choose $e_1$ and $e_2$ so that $w_1,w_2,z_1,z_2$ appear in this order in the outer face of $\Omega''$ and let $z_2$ be in $W_1-W_2$ if possible. Note that $z_1\neq z_2$ for otherwise $\{z_1,z_2\}$ is  a cut of $\Omega$. 

Suppose that there exists a vertex $z \in W_2-\{z_2\}$. Then one of the following occurs: $\Omega''$ contains a $(z_1-z_2,z-w_2)$-linkage avoiding $w_1$,
or there is a 2-vertex-cut $\{w_1,c\}$ in $\Omega''$ separating $z$ from $w_2$ and $z_1$ from $z_2$.
The former case does not occur, else $\Omega$ contains two disjoint unbalanced cycles.
Suppose that the latter occurs.
Since $\{w_1,w_2,c\}$ is not a 3-vertex-cut of $\Omega$, we have that $c=z_1$ and
$w_2$ is only incident with $w_1$ and $z_1$ in $\Omega''$. Hence, $U_1-\{e_1\}$ is 2-balanced. If $U_2$ is not 2-balanced, then $w_2z_2\in U_2$, and there is a vertex $z'_2\in W_2$ such that every $\{w_2z_2, w_2z'_2\}$-cycle for $\Omega''$ is unbalanced and $w_1,w_2,z_1,z_2,z'_2$ appear in this order in the outer face of $\Omega''$. By symmetry $w_1$ is only incident with $w_2$ and $z'_2$ in $\Omega''$, and $U_2-\{w_2z'_2\}$ is 2-balanced. Moreover, since $\Omega$ has no two disjoint  unbalanced cycles, $W_1=\{z_1,z_2\}$ and $W_2=\{z_2,z'_2\}$; so either $\{z_1,z_2,z'_2\}$ is a cut of $\Omega$ or $V(\Omega)=\{w_1,w_2,z_1,z_2,z'_2\}$, a contradiction.   So $U_2$ is 2-balanced, implying that $\Omega''\cup U_2 -\{w_1w_2,w_2z_1\}$ is balanced. In this case $\Omega$ is projective planar with a special triple, by setting (in the definition of projective planar graph with a special triple), $H=\Omega''\cup U_2 -\{w_1w_2,w_2z_1\}$ and $x=w_1, y_1=w_2$ and $y_2=z_1$.

The remaining case is when $W_2=\{z_2\}$ and, if there exists any other edge $e \in U_1$ such that the $\{e,e_1\}$-cycles for $\Omega''$ are unbalanced, then $e$ is also incident with $z_2$ by the choice of $z_2$. In this case $\Omega$ is projective planar with a special triple, by setting  (in the definition of projective planar graph with a special triple) $x=w_1$, $y_1=z_2$ and $y_2=w_2$.
\end{proof}

\section*{Aknowledgements}
We would like to thank Geoff Whittle for his support, in particular for the first author's visit to the University of Western Australia, where this project was initiated.

\appendix

\section{Pictures of tangled structures}\label{sec:figures}
Following are pictures of the structures described in \sref{structures}.
The grey subgraphs represent balanced graphs; these balanced graphs are all planar, with the exception of Figure~\ref{fig:fat-triangle}.

\begin{figure}[h!]
\begin{center}
\includegraphics[page=2,height=5.2cm]{tangled-figures.pdf}
\caption{A generalised wheel.}
\label{fig:gen-wheel}
\end{center}
\end{figure}

\begin{figure}[h!]
\begin{minipage}[b]{0.45\linewidth}
\centering
\includegraphics[page=3,height=4.5cm]{tangled-figures.pdf}
\caption{A criss-cross.}
\label{fig:criss-cross}
\end{minipage}
\hspace{0.5cm}
\begin{minipage}[b]{0.45\linewidth}
\centering
\includegraphics[page=10,height=4cm]{tangled-figures.pdf}
\caption{A fat triangle.}
\label{fig:fat-triangle}
\end{minipage}
\end{figure}

\begin{figure}[h!]
\begin{center}
\includegraphics[page=4,height=4.5cm]{tangled-figures.pdf}
\includegraphics[page=5,height=4cm]{tangled-figures.pdf}
\includegraphics[page=6,height=4cm]{tangled-figures.pdf}
\caption{A projective planar biased graph with, from left to right, a special vertex, a special pair and a special triple.}
\label{fig:ppsv}
\end{center}
\end{figure}

\begin{figure}[h!]
\begin{center}
\includegraphics[page=11,height=6cm]{tangled-figures.pdf}
\includegraphics[page=12,height=6cm]{tangled-figures.pdf}
\caption{Tricoloured biased graphs.}
\label{fig:tric}
\end{center}
\end{figure}

\clearpage
\bibliographystyle{amsplain}
\bibliography{/Users/piv8irene/Dropbox/Irene-bibliography}

\end{document}